\definecolor{cobalt}{RGB}{61,89,171}
\def\bN{\mathbb{N}}
\def\Q{\mathbb{Q}}
\def\QQ{\mathbb{Q}}
\def\bQ{\mathbb{Q}}
\def\R{\mathbb{R}}
\def\bR{\mathbb{R}}
\def\Z{\mathbb{Z}}
\def\bZ{\mathbb{Z}}
\def\bk{\mathbf{k}}
\def\fg{\mathfrak{g}}
\def\fn{\mathfrak{n}}
\def\cat{\mathrm{cat}}
\def\cd{\mathrm{cd}}
\theoremstyle{plain}
\newtheorem{theorem}{Theorem}[section]
\newtheorem{proposition}[theorem]{Proposition}
\newtheorem{lemma}[theorem]{Lemma}
\newtheorem{corollary}[theorem]{Corollary}
\newtheorem{conjecture}[theorem]{Conjecture}
\newtheorem{introtheorem}{Theorem}
\theoremstyle{definition}
\newtheorem{definition}[theorem]{Definition}
\newtheorem{example}[theorem]{Example}
\theoremstyle{remark}
\newtheorem{remark}[theorem]{Remark}
\newcommand{\thmref}[1]{Theorem~\ref{#1}}
\newcommand{\propref}[1]{Proposition~\ref{#1}}
\newcommand{\lemref}[1]{Lemma~\ref{#1}}
\newcommand{\corref}[1]{Corollary~\ref{#1}}
\newcommand{\remref}[1]{Remark~\ref{#1}}
\newcommand{\examref}[1]{Example~\ref{#1}}
\newcommand{\defref}[1]{Definition~\ref{#1}}
\newcommand{\subsecref}[1]{Subsection~\ref{#1}}
\def\CP{\mathbb C{\rm P}}
\def\Hom{\mathrm{Hom}}
\def\e_0{{\sf {e_0}}}
\def\cat{{\sf {cat}}}
\def\dim{{\sf {dim}}}
\def\b_1{{\sf {b_1}}}
\def\om{\omega}
\def\cupl{{\sf {cup}}}
\begin{document}

\title[Homotopy Invariants and Almost Non-Negative Curvature] {Homotopy Invariants and Almost Non-Negative Curvature}

\author[G. Bazzoni]{Giovanni Bazzoni}
\address{Departamento de \'Algebra, Geometr\'ia y Topolog\'ia, Facultad de Ciencias Matem\'aticas, Universidad Complutense de Madrid, 28040 Madrid, Spain}
\email{gbazzoni@ucm.es}

\author[G. Lupton]{Gregory Lupton}
\address{Department of Mathematics\\
Cleveland State University\\
Cleveland OH \\
44115 USA}
\email{g.lupton@csuohio.edu}

\author[J. Oprea]{John Oprea}
\address{Department of Mathematics\\
Cleveland State University\\
Cleveland OH \\
44115 USA}
\email{j.oprea@csuohio.edu}

\date{\today}

\keywords{Almost non-negatively curved manifold, fundamental group action, TNCZ, rational homotopy}
\subjclass[2010]{55P62}

\begin{abstract} This paper explores the relation between the structure of fibre bundles akin to those associated to a closed almost nonnegatively sectionally curved manifold and rational homotopy theory.
\end{abstract}

\maketitle


\section{Introduction}\label{sec:intro}
A closed smooth manifold $M^m$ is said to be \emph{almost non-negatively (sectionally) curved} (or \emph{ANSC}) if it admits a sequence of
Riemannian metrics $\{g_n\}_{n\in \mathrm N}$ whose sectional curvatures $K_{g_n}$ and diameters ${\rm diam}_{g_n}$ satisfy
\[
 K_{g_n} \geq -\frac{1}{n}\ \ {\rm and}\ \ {\rm diam}_{g_n} \leq \frac{1}{n}.
\]
This is equivalent to the more common definition of saying that, for each $\epsilon > 0$, there exists a metric $g$ such that $K_g \cdot {\rm diam}_g^2 \geq -\epsilon$. ANSC manifolds generalize almost flat manifolds as well as manifolds with non-negative sectional curvature. Recently, in \cite{KPT} two remarkable theorems were proved that link curvature with intrinsic homotopy structure.

\begin{introtheorem}{\cite[Theorem A]{KPT}}\label{thm:nilpfincov}
 A closed ANSC manifold $M$ has a finite cover $\widehat M$ that is a nilpotent space in the sense of homotopy
theory.
\end{introtheorem}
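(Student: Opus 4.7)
My plan is to proceed in three stages: first pass to a finite cover where $\pi_1$ becomes nilpotent, second realize that cover as the total space of a fibre bundle with nilmanifold fibre, and third verify the two defining properties of a nilpotent space using the long exact sequence of the bundle.

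The first step rests on the theorem of Kapovitch--Wilking extending Gromov's almost flat manifolds result to the ANSC setting: $\pi_1(M)$ admits a nilpotent subgroup of finite index. Passing to the finite cover $\widehat M \to M$ corresponding to this subgroup yields a manifold with nilpotent fundamental group, and by passing further I may additionally arrange $\pi_1(\widehat M)$ to be torsion-free.

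For the second step I would invoke the Cheeger--Fukaya--Gromov structure theory for collapsing with a lower sectional curvature bound, in the form sharpened by Kapovitch--Wilking for ANSC manifolds: after one further finite cover, $\widehat M$ is the total space of a fibre bundle $N \hookrightarrow \widehat M \to B$ whose fibre $N$ is a nilmanifold and whose base $B$ arises as a Gromov--Hausdorff limit of rescaled metrics on $M$, hence is a compact Alexandrov space of strictly smaller dimension. The exact sequence $\pi_1(N) \to \pi_1(\widehat M) \to \pi_1(B) \to 1$ then exhibits $\pi_1(B)$ as a nilpotent quotient of $\pi_1(\widehat M)$.

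Finally, since $N$ is a $K(\pi_1(N),1)$, the long exact sequence of homotopy groups collapses to isomorphisms $\pi_k(\widehat M) \cong \pi_k(B)$ for every $k \geq 2$, so the $\pi_1(\widehat M)$-action on $\pi_k(\widehat M)$ factors through the $\pi_1(B)$-action on $\pi_k(B)$. I must then show this remaining action is nilpotent, and this is the main obstacle. My strategy is induction on the dimension of the base: the Alexandrov space $B$ inherits an iterated collapse structure, so after a further finite cover it again admits a nilmanifold bundle structure over a still lower-dimensional base, and this process terminates at a point. The technical difficulty is to ensure that each iterated finite cover of $B$ lifts to, or is compatible with, a finite cover of the original $M$, which requires careful bookkeeping of the tower of covers furnished by the structure theorem; an alternative route would be to exploit directly the fact that the structure group of $N \hookrightarrow \widehat M \to B$ reduces to a group of nilpotent automorphisms, which is what ultimately forces the monodromy on homotopy to be nilpotent.
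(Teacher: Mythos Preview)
This theorem is not proved in the present paper; it is quoted verbatim from \cite{KPT} as their Theorem~A and used as a black box.  There is therefore no proof here against which to compare your proposal.

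On your sketch itself: the broad strategy of collapsing to a lower-dimensional base and inducting on its dimension is indeed the shape of the argument in \cite{KPT}, but the step you correctly flag as the ``main obstacle'' is where all the content lies.  The base $B$ you obtain is an Alexandrov space, not a smooth manifold, so the ANSC structure theorems do not apply to it directly and you cannot simply iterate the fibre-bundle decomposition.  The actual proof in \cite{KPT} handles this via gradient flows on Alexandrov spaces and a delicate analysis of how short generators of $\pi_1$ act under successive rescalings; the tower-of-covers bookkeeping you allude to is precisely where the work is.  Note also that the companion structure theorem quoted in this paper as \thmref{thm:kpt} has the nilmanifold as the \emph{base} and a simply connected manifold as the fibre, which is the opposite orientation from the Cheeger--Fukaya--Gromov $N$-structure you invoke; these are different statements.  (A minor attribution point: the results you need are due to Kapovitch--Petrunin--Tuschmann; Kapovitch--Wilking is a separate paper treating almost nonnegative Ricci curvature.)
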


\begin{introtheorem}{\cite[Theorem C]{KPT}}\label{thm:kpt}
If $M$ is a closed ANSC manifold, then there is a finite cover $\overline M$ that is the total space of a fiber bundle
\[
F \to \overline M \stackrel{p}{\to} N\,,
\]
where $N=K(\pi,1)$ is a nilmanifold and $F$ is a simply connected closed manifold.
\end{introtheorem}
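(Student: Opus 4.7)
The plan is to use \thmref{thm:nilpfincov} to reduce to a finite nilpotent cover, identify a natural classifying map to a nilmanifold, and then realize that map as a smooth fiber bundle using the geometric structure afforded by the ANSC hypothesis.

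First, I would invoke \thmref{thm:nilpfincov} to pass to a finite cover $\widehat M$ that is a nilpotent space; in particular $\pi_1(\widehat M)$ is finitely generated nilpotent. Since every finitely generated nilpotent group contains a torsion-free nilpotent subgroup of finite index, after a further finite cover I may assume that $\pi := \pi_1(\overline M)$ is torsion-free. Malcev's theorem then realizes $K(\pi,1)$ as a closed nilmanifold $N$, and the map classifying the universal cover gives a continuous $p : \overline M \to N$ inducing an isomorphism on $\pi_1$, whose homotopy fiber is the universal cover $\widetilde{\overline M}$.

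Second, to upgrade $p$ into the desired fiber bundle, it suffices to show that $\widetilde{\overline M}$ is $\pi$-equivariantly diffeomorphic to a product $F \times \widetilde N$, with $F$ a closed simply connected manifold, $\pi$ acting trivially on $F$ and by deck transformations on $\widetilde N \cong \R^n$. Passing to the $\pi$-quotient then gives the fiber bundle $F \to \overline M \to N$ claimed in the theorem, with $F$ identified (up to homotopy) with the homotopy fiber of $p$. Note that this is consistent with $\overline M$ being nilpotent and closed: the universal cover is noncompact but retains the homotopy type of a closed simply connected manifold.

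Third, and this is where the main work lies, I would construct the equivariant product splitting of $\widetilde{\overline M}$ from the collapse data $\{g_n\}$. The picture to establish is an ANSC analog of the Cheeger--Gromoll soul theorem, in which the nilpotent direction along which $M$ collapses is split off from a transverse closed simply connected ``soul.'' The obstacle is that, unlike in the non-negatively curved setting, for ANSC there is no single metric realizing the geometric decomposition; one must instead pass to a Gromov--Hausdorff limit of $(M, g_n)$ (or of finite covers thereof) and use Kapovitch--Wilking-type stability to transfer a splitting of the limit back to the cover. The arrangement of finite covers in the first step must be refined so that the induced nilpotent structure on $\widetilde{\overline M}$ becomes pure and globally trivialized; once this is done, the resulting product yields the fiber bundle of the theorem.
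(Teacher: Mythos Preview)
This statement is not proved in the paper at all: it is quoted verbatim as Theorem~C of \cite{KPT} and used as input for the paper's own developments (in particular for \thmref{thm:replace}). There is therefore no ``paper's own proof'' to compare against; the paper treats the result as a black box from the literature on Alexandrov geometry and gradient flows.

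That said, your sketch has a substantive error in the reduction step. You claim it suffices to show that $\widetilde{\overline M}$ is $\pi$-equivariantly diffeomorphic to $F\times\widetilde N$ with $\pi$ acting \emph{trivially} on the $F$ factor. But a trivial action on $F$ forces the quotient to be the product $F\times N$, which is strictly stronger than the fiber-bundle conclusion of the theorem. What you actually need is that $\widetilde{\overline M}\to\widetilde N\cong\R^n$ is a $\pi$-equivariant fiber bundle with fiber a closed simply connected manifold; the deck action of $\pi$ on the total space covers the deck action on $\R^n$ but will in general twist the fibers (this twisting is exactly the holonomy of the bundle $F\to\overline M\to N$, and is what the paper later analyzes via the derivations~$\theta_i$). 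Your third paragraph also remains a wish list rather than an argument: the hard content of \cite{KPT} is precisely to produce, from the collapsing sequence, a $\pi$-invariant submersion of $\widetilde{\overline M}$ onto $\R^n$ with compact simply connected fibers, and this requires the full machinery of gradient flows on Alexandrov spaces, controlled homotopy, and a delicate induction on the nilpotency length---none of which is even hinted at here.
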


\begin{remark}
In fact, the fibre $F$ is almost non-negatively curved in a certain generalized sense. Because we will not deal with this property, we refer the interested reader to \cite{KPT} for the precise definition.
\end{remark}

\begin{remark}\label{rem:equivfib}
Because $\pi_1(F)=0$ and $N=K(\pi,1)$, the bundle $F \to \overline M \to N$ is homotopy equivalent to the classifying fibration for the universal cover, $\widetilde M \to \overline M \stackrel{j_1}{\to} K(\pi,1)$. (Here, note that $\widetilde M$ is the universal cover of $M$ as well as of $\overline M$.) This means that hypotheses we make about $F$ below can equally well be viewed as hypotheses on the universal cover $\widetilde M$.

Of course, $\pi$ is an infinite (in fact, torsionfree nilpotent) group, so $\widetilde M$ is non-compact. Therefore, it seems strange on the face of it that we have
$\widetilde M \simeq F$ with $F$ compact, but in fact, this is not so unusual. For instance, the universal cover of $S^2 \times S^1$ is $S^2 \times \bR$
while the fiber of $S^2 \times S^1 \to S^1$ is the compact manifold $S^2$ of the same homotopy type as $S^2 \times \bR$.
\end{remark}

This paper began as a general exploration of the homotopical properties of bundles of ANSC-type $F \to \overline M \to N$, but evolved into a more
focused attempt to understand the interrelationships between \emph{nilpotency}, compactness of the fibre (with consequences and generalizations) and the rational
homotopy structure of the bundle. Because we wish to use the tools of rational homotopy to investigate the structure of ANSC manifolds, we need to know
that the bundle in Theorem \ref{thm:kpt} is of a particular type called \emph{quasi-nilpotent}. Indeed, the authors of \cite{KPT} never state that the space
$\overline M$ is itself nilpotent and this is essential to apply rational homotopy theory here. Thus, our first order of business is to place Theorem \ref{thm:kpt}
inside the rational world.

\section{The First Reduction}\label{sec:firstred}
To begin, let's recall the notion of nilpotent space.

\subsection{Nilpotence}\label{subsec:nilp}
\begin{definition}\label{def:nilp}
A space $X$ is \emph{nilpotent} if $\pi_1(X)$ is a nilpotent group and the standard action of $\pi_1(X)$ on each $\pi_j(X)$ for $j \geq 2$ is a nilpotent
action.
\end{definition}

This requirement is equivalent to saying that $\pi_1(X)$ acts nilpotently on $H_*(\widetilde X)$ where $\widetilde X$ is the universal cover of $X$, see \cite{HMR}.

\begin{remark}
Let a group $G$ act on a group $H$ via $g \cdot h$. Form
\[
\Gamma_2(H) = \{(g\cdot h)h^{-1} \mid g \in G,\ h \in H\}
\]
and let $\Gamma_n(H) = \Gamma_2(\Gamma_{n-1}(H))$. Then we have a sequence
\[
\Gamma_1(H)\coloneqq H \supseteq \Gamma_2(H) \supseteq \ldots \supseteq \Gamma_k(H) \supseteq \ldots\ .
\]
If for some $k$, $\Gamma_k(H)=\{e\}$, then the $G$-action on $H$ is said to be nilpotent.
\end{remark}

\begin{remark}
If $H$ is a group acting on itself by conjugation, then the action is nilpotent exactly when $H$ is a nilpotent group. Note that the $\Gamma_i(H)$'s in this case form the lower central series of $H$.
\end{remark}

\begin{remark}
A nilpotent action on a vector space $V$ is what is usually called a \emph{unipotent} action. That is, there is a finite sequence of subspaces
\[
V_k=\{0\} \subset V_{k-1} \subset \cdots V_1 \subset V_0=V\,.
\]
such that the action on each quotient $V_j/V_{j+1}$ is trivial. This is relevant for rational homotopy since the ``fundamental group'' acts on the
rational vector spaces $\pi_j(X) \otimes \bQ$.
\end{remark}

Recall that a nilmanifold $N$ is a compact quotient of a nilpotent Lie group by a finitely generated torsionfree discrete subgroup: $N = G/\pi$. Since nilpotent Lie groups are diffeomorphic to Euclidean spaces, we see that $N=K(\pi,1)$, so $N$ is a nilpotent space.

Now suppose $M$ is ANSC with fundamental group $\pi_1(M)$. From Theorem \ref{thm:nilpfincov}, because $\widehat M$ is nilpotent and a finite cover of $M$, we see that $\pi_1(\widehat M) = \widehat\Gamma$ is a finite index nilpotent subgroup of $\pi_1(M)$ which acts nilpotently on $H_*(\widetilde M)$. From Theorem \ref{thm:kpt}, since $F$ is simply connected, $\overline M$ is a finite cover and $N$ is a nilmanifold, we see that $\pi_1(\overline M) = \pi_1(N)=\overline \Gamma$ is a torsionfree nilpotent group of finite index in $\pi_1(M)$. Let
\[
\Gamma = \widehat\Gamma \cap \overline\Gamma
\]
and note that $\Gamma$ is torsionfree and acts nilpotently on $H_*(\widetilde M) \cong H_*(F)$ (by Remark \ref{rem:equivfib}). Furthermore, since the intersection of finite index subgroups has finite index as well, we see that the indices obey
\[
[\pi_1(M):\Gamma]<\infty, \quad [\widehat\Gamma:\Gamma]<\infty, \quad [\overline\Gamma:\Gamma]<\infty\,.
\]
Finally, since $\Gamma$ has finite index in $\overline\Gamma$, it is a lattice in the nilpotent Lie group defining $N=G/\overline\Gamma$ too. Then
the map $q\colon N'=G/\Gamma \to G/\overline\Gamma=N$ is a finite covering of order $[\overline \Gamma:\Gamma]$. Now consider the following pullback diagram.
\[
\xymatrix{
Q \ar[r]^-s \ar[d]_-r & N' \ar[d]^-q \\
\overline M \ar[r]^-p & N.
}
\]
Because $q$ is a cover and $p$ induces an isomorphism on fundamental groups, we see that $r$ is a connected finite cover of the same order as $q$. Because $p$ is a bundle map, we see that $s$ is a bundle map with the same fibre $F$. But now $Q$ has fundamental group $\Gamma$ that acts nilpotently on $H_*(F) \cong H_*(\widetilde M)$. Thus $Q$ is a nilpotent space. Therefore,
(returning to the original notation) Theorem \ref{thm:kpt} can be replaced by

\begin{theorem}\label{thm:replace}
If $M$ is a closed ANSC manifold, then there is an orientable finite cover $\overline M$ that is the total space of a fiber bundle
\[
 F \to \overline M \stackrel{p}{\to} N\,,
\]
where $\overline M$ is a nilpotent space, $N=K(\pi,1)$ is a nilmanifold and $F$ is a simply connected closed manifold (of generalized non-negative
sectional curvature).
\end{theorem}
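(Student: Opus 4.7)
The plan is to combine the two input theorems to produce a finite cover that is simultaneously nilpotent and a bundle over a nilmanifold, then refine it to be orientable. Most of the construction is already carried out in the discussion preceding the theorem, so I would assemble those pieces and separately handle the orientability.

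First, I would invoke \thmref{thm:nilpfincov} and \thmref{thm:kpt} to produce two finite covers of $M$: a nilpotent cover with fundamental group $\widehat\Gamma\leq\pi_1(M)$, and a cover fibring as $F\to\overline M\to N$ with fundamental group $\overline\Gamma\leq\pi_1(M)$, where $\overline\Gamma$ is a torsionfree nilpotent lattice in the simply connected nilpotent Lie group $G$ with $N=G/\overline\Gamma$. I would then set $\Gamma=\widehat\Gamma\cap\overline\Gamma$; this is a finite index torsionfree nilpotent subgroup of $\pi_1(M)$ which, by virtue of lying in $\widehat\Gamma$, still acts nilpotently on $H_*(\widetilde M)\cong H_*(F)$ (using \remref{rem:equivfib}).

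Second, since $\Gamma$ has finite index in the lattice $\overline\Gamma\subseteq G$, it is itself a lattice in $G$, so $N'=G/\Gamma$ is a nilmanifold and the projection $q\colon N'\to N$ is a finite covering. I would pull back the fibre bundle $F\to\overline M\to N$ along $q$ to obtain a fibre bundle $F\to Q\to N'$, in which $Q$ is a connected finite cover of $M$ with the same fibre $F$. Since $F$ is simply connected and $p$ induces an isomorphism on $\pi_1$, I get $\pi_1(Q)=\Gamma$, and the action of $\Gamma$ on $H_*(F)$ is nilpotent, so $Q$ is a nilpotent space.

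Third, for orientability I would pass to the orientation double cover when necessary. This corresponds to the kernel $\Gamma_0$ of the orientation character $w_1\colon\Gamma\to\Z/2$. Because $\Gamma_0$ has finite index in the lattice $\Gamma\subseteq G$, it is again a lattice; the resulting nilmanifold $N''=G/\Gamma_0$ double covers $N'$, and the pullback of $Q\to N'$ along $N''\to N'$ realizes the orientation double cover as the total space of a bundle $F\to\overline M\to N''$ (using again that $F$ is simply connected, so covers of $Q$ are classified by subgroups of $\pi_1(Q)$). Nilpotency is preserved since $\Gamma_0$ is torsionfree nilpotent and still acts nilpotently on $H_*(F)$.

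The main obstacle I anticipate is the orientation step: one must be sure that passing to the orientation double cover preserves both the nilpotent structure of the total space and the bundle-over-nilmanifold structure. Both reduce to two standard facts, namely that a finite index subgroup of a lattice in a simply connected nilpotent Lie group is again a lattice, and that finite covers of a total space with simply connected fibre are precisely the pullbacks of finite covers of the base. With these in hand the argument is the one sketched in the narrative before the theorem.
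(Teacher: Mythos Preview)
Your construction in the first two steps is exactly the argument the paper gives in the narrative preceding the theorem: intersect the two finite-index subgroups, observe that the result is still a lattice, and pull back the bundle to obtain a nilpotent total space fibring over a nilmanifold.

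The only divergence is in how orientability is handled.  The paper does \emph{not} pass to an orientation double cover; instead it defers orientability to \remref{rem:orientability} and the proof of \propref{prop:cat}, where it is shown that the element $v_1\cdots v_n\mu$ represents a non-zero top rational class of the cover, hence the cover is already orientable.  Your approach of passing to the kernel of $w_1$ is correct and more self-contained, but it is also unnecessary: since $\Gamma$ acts nilpotently on $H_*(F)$, it acts nilpotently on $H_{\dim F}(F)\cong\bZ$, and a nilpotent action on $\bZ$ is automatically trivial (the only nontrivial automorphism is $-1$, which is not unipotent).  As $N'$ and $F$ are both orientable and the monodromy preserves the orientation of $F$, the total space $Q$ is already orientable without any further cover.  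So your third step, while harmless, can be replaced by this one-line observation---which is in spirit what the paper's forward reference to \propref{prop:cat} amounts to.
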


\begin{remark}\label{rem:orientability}
 The orientability of the cover, which is not evident here, will be shown within the proof of \propref{prop:cat}.
\end{remark}

We will see that this elementary reduction allows for an interesting classical corollary in addition to later rational consequences.

\subsection{ANSC-type bundles}\label{subsec:anscbun}

The approach we take to understanding ANSC-manifolds is a blend of classical homotopy techniques and more modern rational homotopy methods. However, in order to use rational homotopy theory, certain requirements must be satisfied. In fact, for a fibre bundle $F\to E\stackrel{p}{\to}B$ to admit nice models in the sense of rational homotopy theory, the map $p$ has to be \emph{quasi-nilpotent}, meaning that $\pi_1(B)$ acts nilpotently on (either $H_*(F;\bZ)$ or) $H^*(F;\bZ)$, and consequently on $H^*(F;\Q)$ or $H^*(F;\R)$. Before defining the main object of interest of this paper, let us make sure that we can apply rational homotopy theory to the kind of fibre bundle obtained from ANSC manifolds using \thmref{thm:replace}.

As we noted above, $N$ is a nilpotent space. It follows from \cite{KPT} that $\overline{M}$ is a nilpotent space as well. Therefore, by \cite[Chapter II, Proposition 2.13]{HMR}, $p$ is a nilpotent map\footnote{Recall that a map $p\colon E\to B$ is {\em nilpotent} if $\pi_1(E)$ acts nilpotently on $\pi_*(F)$ (here $F$ is the homotopy fiber of $p$. Recall that this action is given via the isomorphism $\pi_j(B,E) \cong \pi_{i-1}(F)$ for all $j$ and the usual action of $\pi_1(E)$ on the homotopy of the (mapping cylinder) pair $\pi_j(B,E)$.}. Now, it is a nontrivial fact proved in \cite[Chapter 2, 5.4]{BK} that $p$ is indeed quasi-nilpotent. Thus the following definition is ready to be investigated with rational homotopic methods.

\begin{definition}\label{def:ANSCfib}
Say that a fibre bundle $F \to \overline M \stackrel{p}{\to} N$ is of \emph{ANSC-type} if $F$ is a simply connected closed manifold, $\overline M$ is a nilpotent
manifold and $N$ is a nilmanifold.
\end{definition}

As anticipated, we will analyze ANSC-type bundles from the viewpoint of homotopy theory. In particular, we will use rational homotopy theory to derive a numerical relationship among the three constituent spaces $F$, $\overline M$ and $N$ that leads to interesting Bochner-type results for ANSC-manifolds.

As an appetizer, here is a classical consequence of Theorem \ref{thm:replace}. Recall that, for a smooth closed $4k$-manifold $M$, the signature $\sigma(M)$ is defined to be the signature of the symmetric bilinear form
\[
H^{2k}(M;\R) \times H^{2k}(M;\R) \to H^{4k}(M;\R) \cong \R
\]
given by Poincar\'e duality. The matrix of the form is symmetric and non-degenerate, so it can be diagonalized to a matrix with real eigenvalues. Then
$\sigma(M)$ is the number of positive eigenvalues minus the number of negative eigenvalues. The signature is defined to be zero unless the
manifold has dimension a multiple of $4$. It is a standard fact that $\sigma(M \times N) = \sigma(M)\cdot\sigma(N)$.
In \cite{Ro}, the following more general multiplicative property of signature $\sigma$ was proven.

\begin{theorem}\label{thm:roit}
If $F \to E \to B$ is a smooth quasi-nilpotent fibre bundle (over $\R$) of coherently oriented smooth closed manifolds, then
\[
\sigma(E) = \sigma(B)\cdot\sigma(F)\,.
\]
\end{theorem}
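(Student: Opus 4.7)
The plan is to reduce the statement to the classical Chern-Hirzebruch-Serre theorem, which gives the same signature identity under the stronger hypothesis that $\pi_1(B)$ acts trivially on $H^*(F;\R)$. Because a unipotent action of any group on a finite-dimensional real vector space admits a filtration by invariant subspaces whose successive quotients carry trivial action, the quasi-nilpotent hypothesis is exactly what is needed to bridge the two settings; our task is to carry out this bridge in a way compatible with Poincar\'e duality.

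The first step is to set up the Leray-Serre spectral sequence of $p$, whose $E_2$-page is $H^p(B;\mathcal{H}^q(F;\R))$, with $\mathcal{H}^q(F;\R)$ the local system of monodromy on the fibre cohomology. Since the bundle is coherently oriented, the top local system $\mathcal{H}^{\mathrm{top}}(F;\R)$ is trivial, and since monodromy acts by orientation-preserving diffeomorphisms, the Poincar\'e duality pairing on the fibre is monodromy-invariant. Following the template of Chern-Hirzebruch-Serre, I would equip the $E_2$-page with the tensor product of the Poincar\'e duality pairings on base and fibre, verify that it descends to a non-degenerate pairing on $E_\infty$, and identify the resulting signature of $E_\infty$ with $\sigma(E)$ via the standard compatibility of the signature with the Leray filtration.

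The second and crucial step is a purely linear-algebraic lemma: if a finite-dimensional real vector space $V$ carries a non-degenerate symmetric bilinear form $B$ and a unipotent automorphism preserving $B$, then the signature of $B$ equals the signature of the induced form on the associated graded of any $B$-compatible invariant filtration of $V$. This can be proved by writing the automorphism as $\exp(\nu)$ with $\nu$ nilpotent and deforming through the one-parameter family $u_t = \exp(t\nu)$ to the identity, observing that non-degeneracy is preserved along the way and that the signature is a locally constant invariant of non-degenerate symmetric forms. Applying this lemma to the monodromy action on $H^*(F;\R)$ allows one to replace $\mathcal{H}^*(F;\R)$ by its associated graded, which is a trivial local system having the same total pairing data as $H^*(F;\R)$, thereby reducing the computation of the $E_2$-signature to the trivially-twisted case handled by Chern-Hirzebruch-Serre.

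The main obstacle I anticipate is making rigorous the compatibility between the Poincar\'e duality pairing on $H^*(F;\R)$ and the unipotent monodromy action, namely producing a $\pi_1(B)$-invariant filtration whose associated graded inherits a non-degenerate pairing of the same signature; equivalently, showing that the cup-product pairing on $E_2$ remains non-degenerate after the replacement described above and is compatible with the differentials $d_r$ for $r\geq 2$. Once this compatibility is in place, the classical Serre spectral sequence argument for the signature applies verbatim to the associated graded local system and yields $\sigma(E)=\sigma(B)\cdot\sigma(F)$.
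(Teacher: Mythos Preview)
The paper does not give its own proof of this statement: Theorem~\ref{thm:roit} is quoted from Roitberg~\cite{Ro} and used as a black box in the proof of Corollary~\ref{cor:signature}. There is therefore no ``paper's proof'' against which to compare your proposal.

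That said, your outline is broadly the right shape for how this result is actually proved, and is essentially the strategy of \cite{Ro}: one runs the Chern--Hirzebruch--Serre spectral-sequence argument, and the quasi-nilpotent hypothesis enters only through a linear-algebra lemma to the effect that passing to the associated graded of a unipotent-invariant filtration does not change the signature of a compatible non-degenerate form. Two points deserve care if you flesh this out. First, the monodromy is an action of the whole group $\pi_1(B)$, not a single unipotent operator, so your deformation $\exp(t\nu)$ should be replaced by the group-invariant filtration $\Gamma_i(H^*(F;\R))$ coming from the nilpotent action; over $\R$ each element then acts unipotently and the associated graded carries the trivial action. Second, and this is the obstacle you already flag, you cannot literally ``replace'' the local coefficient system in the spectral sequence by its associated graded and still be computing $H^*(E)$. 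The honest argument stays inside the original spectral sequence: one shows that the cup-product pairing on each $E_r$ is compatible with the differentials and with Poincar\'e duality, so that the signature of the induced form on $E_\infty$ equals $\sigma(E)$; then one computes the signature of the $E_2$-form $H^p(B;\mathcal{H}^q(F;\R))$ directly, using the invariant filtration on the coefficient system and the additivity of signature along a filtration by isotropic-complemented subspaces. This is where the unipotency is actually used, and it yields $\sigma(B)\cdot\sigma(F)$ without ever changing the spectral sequence itself.
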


As noted in \cite{Ro}, this was known for bundles where the action was trivial and it was known \emph{not} to hold in general. As for many results in topology, nilpotency is an adequate substitute for simplicity (i.e. trivial action). With Theorem \ref{thm:roit} in mind, we have the following application
of Theorem \ref{thm:replace}.

\begin{corollary}\label{cor:signature}
If $M$ is a compact ANSC manifold with infinite fundamental group, then $\sigma(M)=0$.
\end{corollary}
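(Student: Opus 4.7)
The plan is to combine \thmref{thm:replace} with Roitberg's multiplicativity formula (\thmref{thm:roit}) and the standard behavior of signature under finite covers. First, by \thmref{thm:replace}, I pass to an orientable finite cover $\overline M\to M$ which is the total space of an ANSC-type bundle $F\to \overline M\stackrel{p}{\to} N$, with $F$ a simply connected closed manifold and $N=G/\pi$ a nilmanifold. Since $\pi_1(\overline M)\cong \pi$ has finite index in $\pi_1(M)$, the hypothesis that $\pi_1(M)$ is infinite forces $\pi$ to be infinite, so $\dim N\ge 1$.

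The discussion preceding \defref{def:ANSCfib} already shows that $p$ is quasi-nilpotent. All three of $F$, $N$, $\overline M$ are closed and orientable: $F$ is simply connected, $N$ is a nilmanifold hence parallelizable, and $\overline M$ is orientable by \remref{rem:orientability}. Moreover, the unipotent action of $\pi_1(N)$ on $H^*(F;\R)$ is necessarily trivial on the one-dimensional top class, so the orientations may be chosen coherently. Applying \thmref{thm:roit} then gives
\[
\sigma(\overline M)=\sigma(N)\cdot \sigma(F).
\]

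Next I would argue that $\sigma(N)=0$. The nilmanifold $N=G/\pi$ is parallelizable, since a basis of left-invariant vector fields on the nilpotent Lie group $G$ descends to a global frame on $N$. Consequently all Pontryagin classes of $TN$ vanish, and Hirzebruch's signature theorem gives $\sigma(N)=\int_N L(p_1,\ldots,p_k)=0$ whenever $\dim N$ is a multiple of $4$; in the remaining dimensions $\sigma(N)=0$ by convention. Therefore $\sigma(\overline M)=0$.

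Finally, to descend from $\overline M$ to $M$: if $M$ is non-orientable, then $\sigma(M)=0$ by definition; if $M$ is orientable, then naturality of Pontryagin classes under the finite cover $\overline M\to M$, combined with the Hirzebruch formula, yields
\[
\sigma(\overline M)=[\pi_1(M):\pi_1(\overline M)]\cdot \sigma(M),
\]
and since the index is a positive integer we conclude $\sigma(M)=0$. The only delicate point in this outline is verifying the coherent orientation hypothesis needed to invoke \thmref{thm:roit}; this is precisely where the nilpotency of the monodromy action (and thus its triviality on top cohomology of $F$) does the work, and it is the main reason the signature argument applies to ANSC manifolds at all.
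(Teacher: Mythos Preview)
Your proof is correct and follows essentially the same route as the paper: pass to the nilpotent finite cover via \thmref{thm:replace}, apply Roitberg's multiplicativity (\thmref{thm:roit}) to get $\sigma(\overline M)=\sigma(N)\sigma(F)$, show $\sigma(N)=0$ from parallelizability of the nilmanifold, and then descend using multiplicativity of signature under finite covers. The only minor deviation is that you deduce $\sigma(N)=0$ directly from the Hirzebruch signature formula (vanishing Pontryagin classes), whereas the paper appeals to Wall's theorem that a manifold with vanishing Stiefel--Whitney and Pontryagin numbers bounds; your argument is slightly more direct, and your added care about coherent orientations and the non-orientable case is a welcome clarification.
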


\begin{proof}
By Theorem \ref{thm:replace}, $M$ has a finite cover $\overline M$ which is nilpotent and sits as the total space in a smooth bundle
$F \to \overline M \to N$ where $F$ is a simply connected closed manifold and $N$ is a nilmanifold. By the discussion above, this bundle is quasi-nilpotent, so $\sigma(\overline M)=\sigma(N)\cdot\sigma(F)$. But since $N$ is a nilmanifold, it is parallelizable. Hence its
Stiefel-Whitney classes and Pontryagin classes vanish. By a classical theorem of Wall, this means that $N$ bounds an orientable
manifold. Hence, its signature $\sigma(N)$ vanishes. Thus, $\sigma(\overline M)=0$. Now, $\overline M$ is a finite cover of $M$ (of order $k$ say), so $\sigma(\overline M) = k\,\sigma(M)$ showing that $\sigma(M)=0$ as well.
\end{proof}

Now, let's see how Theorem \ref{thm:replace} can be used rationally to constrain $M$. To begin, we need to recall some facts about rational homotopy theory.


\section{Rational Homotopy Structure}\label{sec:rht}
The reader is referred to \cite{FHT,FHT2}, \cite[Chapters 2 and 3]{FOT} for details and proofs of the statements that follow.

A commutative graded algebra (cga) over a field of characteristic zero $\bk$, $A$, is called
\emph{free graded commutative} if $A$ is the quotient of $TV$, the tensor algebra on the graded vector space $V$,
by the bilateral ideal generated by the elements $a\otimes b - (-1)^{\vert a\vert \cdot \vert b\vert}
b\otimes a$, where $a$ and $b$ are homogeneous elements of $A$. As an algebra, $A$ is the tensor
product of the symmetric algebra on $V^{\mbox{\scriptsize even}}$ with the exterior algebra on
$V^{\mbox{\scriptsize odd}}$:
\[
A = \mbox{Symmetric}(V^{\mbox{\scriptsize even}})\otimes
\mbox{Exterior}(V^{\mbox{\scriptsize odd}})\,.
\]

\noindent We denote the free commutative graded algebra on the graded vector space $V$ by $\Lambda V$.
Note that this notation refers to a free commutative graded algebra and not necessarily to an exterior
algebra alone. We usually write $\Lambda V = \Lambda (x_i)$, where $x_i$ is a homogeneous basis of $V$.
Clearly the cohomology of a cdga is a commutative graded algebra. A morphism of cdga's inducing
an isomorphism in cohomology will be called a \emph{quasi-isomorphism}.
A \emph{Sullivan cdga} is a cdga $(\Lambda V,d)$ whose underlying
algebra is free commutative, with $V = \{\, V^n\,\}$, $n\geq 1$, and such that $V$ admits a basis
$x_\alpha$ indexed by a well-ordered set such that
\[
d(x_\alpha) \in\Lambda(x_\beta)_{\beta<\alpha}\,.
\]

\subsection{Minimal models}\label{subsec:min}
A \emph{(Sullivan) minimal cdga} is a Sullivan cdga $(\Lambda V,d)$ satisfying the additional property that
$d(V)\subset \Lambda^{\geq 2} V$. Minimal cdga's play an important role because they are tractable models for
``all'' other cdga's. (For the path-connected non-simply-connected case of the following result, see
\cite[Chapter 6]{Hal}.)

\begin{theorem}[Existence and Uniqueness of the Minimal Model]\label{thm:existuniqminmod} \hfill\newline
Let $(A,d)$ be a cdga over $\bk$ satisfying $H^0(A,d) = \bk$,
where $\bk$ is $\mathbb R$ or $\mathbb Q$ and ${\rm dim}(H^p(A,d))<\infty$
for all $p$. Then,
\begin{enumerate}
\item There is a quasi-isomorphism $\varphi \colon (\Lambda V,d) \to (A,d)$,
where $(\Lambda V,d)$ is a minimal cdga.
\item The minimal cdga $(\Lambda V,d)$ is unique in the following sense:
If $(\Lambda W,d)$ is a minimal cdga and $\psi\colon (\Lambda W,d) \to
(A,d)$ is a quasi-isomorphism, then there is an isomorphism
$f\colon (\Lambda  V,d) \to (\Lambda W,d)$ such that $\psi\circ f$ is
homotopic (see \cite{FHT}) to $\varphi$.
\end{enumerate}

\noindent The cdga $(\Lambda V,d)$ is then called the \emph{minimal model} of $(A,d)$.
\end{theorem}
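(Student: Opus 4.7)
The plan is to establish existence by an inductive construction of generators and uniqueness by a lifting argument, treating the non-simply-connected case with extra care about the well-ordering.

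For existence, I would build $(\Lambda V, d)$ and the quasi-isomorphism $\varphi$ stage by stage, indexed by a well-ordered set. Start by choosing cocycles in $A^1$ whose cohomology classes span $H^1(A,d)$ and introduce generators $x_\alpha \in V^1$ with $dx_\alpha = 0$ mapping to these cocycles; in the non-simply-connected setting, this must be interleaved with generators of degree $1$ whose differentials lie in $\Lambda^{\geq 2}$ of previously introduced $V^1$ generators, so as to kill unwanted classes in $H^2(\Lambda V_{<\alpha})$ and fix the cup product structure; this is the step where one uses Halperin's more general ``$d(x_\alpha) \in \Lambda(x_\beta)_{\beta<\alpha}$'' formulation rather than a strict degree filtration. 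Having built $(\Lambda V_{<\alpha}, d)$ together with $\varphi_\alpha$ inducing an isomorphism on low-degree cohomology and an injection one degree higher, I would extend by adding two kinds of generators: (i) \emph{hitting} generators $y$ with $dy = 0$ and $\varphi(y)$ a cocycle representing a class in $H^n(A)$ not yet in the image; (ii) \emph{killing} generators $z$ with $dz = w$, where $w$ is a cocycle in $\Lambda V_{<\alpha}$ whose image in $A$ is a coboundary $d\eta$, and $\varphi(z) := \eta$. Minimality, $d(V) \subseteq \Lambda^{\geq 2}V$, is preserved by choosing $w$ to lie in $\Lambda^{\geq 2}$ of the previous generators, which one can arrange because decomposable and indecomposable parts can be separated using the minimal model structure already built. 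Passing to the colimit yields $(\Lambda V, d)$ and $\varphi$, which is a quasi-isomorphism by construction.

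For uniqueness, I would invoke the standard Sullivan lifting lemma: since $\psi\colon (\Lambda W, d) \to (A,d)$ is a surjective quasi-isomorphism up to homotopy (one can factor it through an acyclic fibration, or apply the lifting property directly to free cdga's), there exists a cdga morphism $f\colon (\Lambda V, d) \to (\Lambda W, d)$ with $\psi \circ f \simeq \varphi$. Then $H(f) = H(\psi)^{-1} \circ H(\varphi)$ is an isomorphism, so $f$ is a quasi-isomorphism between minimal Sullivan cdga's. The classical fact that any quasi-isomorphism between minimal Sullivan cdga's is an isomorphism (proved by examining the induced map on indecomposables $V = V/d(\Lambda^{\geq 2} V \oplus \Lambda^+V\cdot\Lambda^+V)$ inductively along the well-ordering) then yields the desired isomorphism $f$.

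The main obstacle, and the reason the excerpt singles out the non-simply-connected case, is handling $V^1$ carefully: one cannot simply induct on degree since degree-$1$ generators may have differentials supported on other degree-$1$ generators, so both existence (producing a minimal model rather than just a Sullivan model) and uniqueness (ensuring $f$ is an isomorphism) require that every choice respects the well-ordering and that the decomposability condition $dx_\alpha \in \Lambda^{\geq 2}(x_\beta)_{\beta<\alpha}$ be maintained at each step. Once the bookkeeping of the well-ordering and the separation of indecomposables is in place, as in \cite[Chapter 6]{Hal}, the argument parallels the familiar simply-connected construction in \cite{FHT}.
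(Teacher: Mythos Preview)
The paper does not actually prove this theorem: it is stated as background in Section~3, which opens with ``The reader is referred to \cite{FHT,FHT2}, \cite[Chapters 2 and 3]{FOT} for details and proofs of the statements that follow,'' and the statement itself points to \cite[Chapter 6]{Hal} for the path-connected non-simply-connected case. So there is no proof in the paper to compare against.

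Your sketch is a faithful outline of the standard argument from those references. The existence half correctly identifies the two roles of new generators (hitting classes not yet in the image, killing spurious cohomology) and flags the genuine subtlety in degree one, where the well-ordering replaces the degree filtration; this is exactly why the paper singles out \cite{Hal}. The uniqueness half is also the textbook route: lift along the quasi-isomorphism using the Sullivan lifting property, then invoke the fact that a quasi-isomorphism between minimal Sullivan algebras is an isomorphism. One minor quibble: your description of the indecomposables as $V/d(\Lambda^{\geq 2}V \oplus \Lambda^+V\cdot\Lambda^+V)$ is garbled---the indecomposables of $\Lambda V$ are simply $\Lambda^+V/\Lambda^{\geq 2}V \cong V$, and the isomorphism argument goes by showing the linear part of $f$ is an isomorphism on $V$ using the well-ordered filtration. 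But the overall strategy is correct and matches what the cited sources do.
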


The connection between this type of algebra and topology is via the de Rham cdga of differential forms on the
manifold $M$, $(\Omega(M),d)$, when $\bk$ is $\mathbb R$ and Sullivan's rational polynomial forms on $M$,
$(A_{PL}(M),d)$, when $\bk$ is $\mathbb Q$. Note that we have the de Rham theorems:
\[
H^*(\Omega(M),d) \cong H^*(M;\R) \quad {\rm and} \quad H^*(A_{PL}(M),d) \cong H^*(M;\Q)
\]
where the right side of each isomorphism denotes singular cohomology. Applying \thmref{thm:existuniqminmod} to these cdga's produces a \emph{minimal model of the space} $M$ denoted by
$\varphi\colon \mathcal M_M = (\Lambda V,d) \to A$, where we let $A$ stand for either the de Rham or Sullivan
algebras. We shall not distinguish the minimal models depending on the field because the context will always be clear. The minimal model thus provides a special type of cdga associated to a space. Note that the condition $H^0(A,d) = \bk$ in \thmref{thm:existuniqminmod} means that any path-connected space
has a minimal model (but the model may not accurately reflect homotopy properties of the space when the space
is not nilpotent). There are several key facts that make minimal cdga's an important tool. Say that the spaces $X$ and $Y$ have \emph{the same rational homotopy type}, denoted $X\simeq_\bQ Y$, if there is a finite chain of maps $X \to Y_1 \leftarrow Y_2 \to \cdots \to Y$ such that each induced map in rational cohomology is an isomorphism.

\begin{proposition}\label{prop:rhtype}
If $X$ and $Y$ have the same rational homotopy type, then their minimal models are isomorphic.
Moreover, if $X$ and $Y$ are nilpotent spaces (e.g. simply connected), then the converse is true.
\end{proposition}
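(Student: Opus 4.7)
The plan is to prove each direction via the Sullivan--de Rham machinery $A_{PL}(-)$ together with the existence--uniqueness statement of \thmref{thm:existuniqminmod}.

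For the forward implication, I would first carry the given zigzag $X \to Y_1 \leftarrow Y_2 \to \cdots \to Y$ through the contravariant functor $A_{PL}(-)$ to obtain a zigzag of cdga morphisms
\[
A_{PL}(X) \leftarrow A_{PL}(Y_1) \to A_{PL}(Y_2) \leftarrow \cdots \leftarrow A_{PL}(Y).
\]
Since each structural arrow induces an isomorphism on rational cohomology and $H^*(A_{PL}(Z),d) \cong H^*(Z;\bQ)$, every arrow of this zigzag is a quasi-isomorphism. I would then apply part (2) of \thmref{thm:existuniqminmod} one edge at a time: if $f \colon A_{PL}(Y_i) \to A_{PL}(Y_{i+1})$ is a quasi-isomorphism and $\varphi_i,\varphi_{i+1}$ are the respective minimal models, then $f\circ\varphi_i \colon \cM_{Y_i} \to A_{PL}(Y_{i+1})$ is also a quasi-isomorphism out of a minimal cdga, so the uniqueness clause supplies an isomorphism $\cM_{Y_i} \cong \cM_{Y_{i+1}}$ (an analogous lift handles arrows pointing the other way). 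Composing these isomorphisms along the chain produces $\cM_X \cong \cM_Y$.

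For the converse, assume $X$ and $Y$ are nilpotent with $\cM_X \cong \cM_Y$. The strategy is to use Sullivan's spatial realization functor to turn this algebraic isomorphism into an explicit zigzag of rational equivalences of spaces. For any nilpotent space $Z$ one has a rationalization $\eta_Z \colon Z \to Z_{\bQ}$, with $Z_{\bQ}$ built (up to homotopy) from $\cM_Z$, such that $\eta_Z$ induces an isomorphism on $H^*(-;\bQ)$. The given isomorphism $\cM_X \cong \cM_Y$ realizes to a genuine homotopy equivalence $\rho \colon X_{\bQ} \xrightarrow{\simeq} Y_{\bQ}$. Splicing this in between the two rationalizations yields the finite chain
\[
X \xrightarrow{\eta_X} X_{\bQ} \xrightarrow{\rho} Y_{\bQ} \xleftarrow{\eta_Y} Y,
\]
each arrow an isomorphism on $H^*(-;\bQ)$, showing $X \simeq_{\bQ} Y$.

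The hard part will be the converse: its validity genuinely requires $Z$ to be nilpotent, because only then does the tower of principal $K(\bQ,n)$-fibrations encoded by $\cM_Z$ actually assemble to a space equipped with a rationalization map. Without nilpotency, the minimal model simply does not retain enough homotopical data to recover the rational homotopy type, which is why the biconditional is one-sided in general. By contrast, the forward direction reduces, once the functoriality of $A_{PL}$ is invoked, to iterated applications of the lifting clause already packaged in \thmref{thm:existuniqminmod}.
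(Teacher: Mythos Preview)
Your proposal is correct and follows the same line the paper indicates: the paper does not give a detailed proof but simply remarks that the converse ``follows from the existence of spatial rationalizations $X_\bQ$ coming from homotopical localization theory,'' which is exactly the mechanism you invoke, and it leaves the forward direction implicit as a consequence of \thmref{thm:existuniqminmod}. Your write-up is more explicit than the paper's, but the underlying strategy is identical.
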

The second statement follows from the existence of spatial rationalizations $X_\Q$ coming from homotopical
localization theory. In general, these do not exist for non-nilpotent spaces.
Except for the existence of a localization, everything we have said applies to models over $\mathbb R$ as well.

\begin{definition}\label{def:formal}
A space $X$, with minimal model $(\Lambda V,d)$, is called {\em formal} if there is a quasi-isomorphism
\[
\theta \colon (\Lambda V,d) \to (H^*(X;\bQ),0)\,.
\]
\end{definition}

\begin{example}\label{exam:tori}
Let $T^n$ denote the $n$-torus. The cohomology $H^*(T^n;\Q)=\Lambda(x_1,\ldots,x_n)$ is an exterior algebra on $n$ generators
in degree one, so is free as an algebra. Then, denoting Sullivan or de Rham forms by $A$, we can define a cdga homomorphism
$\varphi\colon H^*(T^n;\Q) \to A$ by simply assigning $x_j$ to any cocycle in $A$ representing 
$x_j\in H^*(A,d) \cong H^*(T^n;\Q)$. Because $\varphi$ induces an isomorphism on cohomology and $H^*(T^n;\Q)$ is free as an algebra,
we see that $H^*(T^n;\Q)$ itself is the minimal model of $T^n$.
\end{example}

\begin{example}[See \cite{FOT}]\label{exam:nilman}
To any nilmanifold $N=K(\pi,1)$, we can associate a rational nilpotent Lie algebra
$\fg$ with the property that there exists a basis in
$\fg$, $\{X_1,\ldots,X_n\}$, such that the structure
constants $\{c_{ij}^k\}$ arising in brackets
\begin{equation}\label{eq:brackets}
[X_i,X_j] = \sum_k c_{ij}^k X_k
\end{equation}
are rational numbers for all $i,j,k$. In fact, corresponding to $\fg$, there is an $n$-dimensional, simply
connected nilpotent Lie group $G$ which admits a discrete co-compact
subgroup $\pi$ so that $N=G/\pi$ is a compact nilmanifold.

Let $\fg$ have basis $\{X_1,\ldots,X_n\}$; the dual of
$\fg$, $\fg^*$, has basis $\{v_1,\ldots,v_n\}$ and
there is a differential $d$ on the \emph{exterior algebra}
$\Lambda \fg^*$ given by defining it to be dual to the bracket on degree $1$ elements,
\[
(dv_k)(X_i,X_j)=-v_k([X_i,X_j])\,,
\]
and then extending $d$ to be a graded derivation.
Now using \eqref{eq:brackets}, duality gives
\[
(d v_k)(X_i,X_j) = - c_{ij}^k
\]
and the differential on generators has the form
\[
d v_k = - \sum_{i<j} c_{ij}^k v_i \wedge v_j\,.
\]
We note that the Jacobi identity in the Lie algebra
is equivalent to the condition $d^2=0$. Therefore, we obtain
a \emph{commutative differential graded algebra} (or cdga)
$(\Lambda \fg^* ,d)$ associated to the Lie algebra
$\fg$. The fundamental result here is the following.

\begin{theorem}\label{thm:nilminmod}
If $N=G/\pi$ is a nilmanifold, then the cdga $(\Lambda \fg^*,d)$ associated to
$\fg$ is a minimal model for $N$ and, thus, computes all of the rational homotopy information about $N$.
\end{theorem}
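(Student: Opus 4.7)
The plan is to establish three things: $(\Lambda \fg^*, d)$ is quasi-isomorphic to the de Rham complex $(\Omega(N), d)$, it is a Sullivan minimal algebra, and the construction descends to a model over $\mathbb{Q}$. For the first, the key input is Nomizu's theorem: for a compact nilmanifold $N = G/\pi$, every left-invariant form on $G$ descends to a well-defined form on $N$, and the resulting inclusion $(\Lambda \fg^*, d) \hookrightarrow (\Omega(N), d)$ is a quasi-isomorphism. The standard proof proceeds by induction on the dimension of $\fg$, using a central extension $\mathbb{R} \to G \to G'$ coming from the lower central series to present $N$ as a circle bundle over a smaller nilmanifold, and then comparing the Leray--Serre spectral sequence of this bundle with its algebraic counterpart on $\Lambda \fg^*$.

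For minimality, the algebra $\Lambda \fg^*$ is freely generated in degree one, and the explicit formula $dv_k = -\sum_{i<j} c_{ij}^k v_i \wedge v_j$ immediately yields $d(V) \subset \Lambda^{\geq 2} V$. To verify the Sullivan ordering condition, I would use the descending central series $\fg = \fg^1 \supset \fg^2 \supset \cdots \supset \fg^{s+1} = 0$, which terminates since $\fg$ is nilpotent. Choose a basis $\{X_1, \ldots, X_n\}$ adapted to the filtration, so that for each $p$ the elements $X_i$ with depth $p_i \geq p$ span $\fg^p$, and order the basis by nondecreasing depth. Since $[\fg^i, \fg^j] \subset \fg^{i+j}$, any nonzero structure constant $c_{ij}^k$ forces $p_k \geq p_i + p_j > \max(p_i, p_j)$, so $i, j < k$ in the chosen order. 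Consequently $dv_k \in \Lambda(v_1, \ldots, v_{k-1})$, which is precisely the Sullivan condition.

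For the descent to $\mathbb{Q}$, the existence of the cocompact discrete subgroup $\pi$ in $G$ implies by Mal'cev's rigidity theorem that $\fg$ admits a basis with rational structure constants, so the cdga built above is already defined over $\mathbb{Q}$. Combined with the preceding two steps and the uniqueness part of \thmref{thm:existuniqminmod}, this identifies $(\Lambda \fg_\mathbb{Q}^*, d)$ as the rational minimal model of $N$. The main obstacle, and really the crux of the theorem, is the quasi-isomorphism provided by Nomizu's theorem; the minimality check, the Sullivan ordering, and the descent to $\mathbb{Q}$ are essentially formal once Nomizu's theorem and the existence of a Mal'cev basis are in hand.
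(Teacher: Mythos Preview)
The paper does not actually give a proof of this theorem: it is stated inside \examref{exam:nilman} as ``the fundamental result here,'' with the example as a whole attributed to \cite{FOT}, and the surrounding text only records consequences of the statement (the form of the minimal model, orientability, the inequality $\b_1(N)\leq\dim(N)$, etc.). So there is no ``paper's own proof'' to compare against.

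Your outline is the standard argument and is correct. Nomizu's theorem is exactly the right ingredient for the quasi-isomorphism $(\Lambda\fg^*,d)\hookrightarrow(\Omega(N),d)$; your verification of the Sullivan/minimality conditions via a basis adapted to the lower central series is precisely how one does it (and the paper itself sketches this in the paragraph following the theorem, noting that ``the nilpotency of $\fg$ converts by duality into the condition that the differential on $v_j$ is a polynomial in $v_k$ with $k<j$''); and invoking Mal'cev for the rational structure constants is likewise standard (the paper assumes this at the outset of the example when it posits a basis with $c_{ij}^k\in\bQ$). One very minor quibble: the de Rham quasi-isomorphism from Nomizu gives the real minimal model directly, and then the rational descent combined with uniqueness of minimal models (\thmref{thm:existuniqminmod}) and the comparison $A_{PL}(N)\otimes_{\bQ}\bR\simeq\Omega(N)$ is what pins down the rational model---you say this, but it is worth being explicit that the real-to-rational step uses uniqueness rather than a direct map into $A_{PL}(N)$.
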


\begin{remark}
 In the theory of Lie algebras, the cdga $(\Lambda \fg^*,d)$ is known as {\em Chevalley-Eilenberg complex} of $\fg$.
\end{remark}

Now, the minimal model of $N$ has the form
\[
{\mathcal M}_N =(\Lambda(v_1,\ldots v_n),d) \qquad {\rm with}
\qquad |v_i|=1\,,
\]
where the nilpotency of $\fn$ converts by duality into
the condition that the differential on $v_j$ is a polynomial
in $v_k$ with $k < j$ having no linear terms. In fact, this can be refined to say that
the generators are added in stages and the generators in the
$j\textsuperscript{th}$ stage have differentials that are polynomials in the
generators of stages $1$ through $j-1$. In particular,
because $\fg$ is nilpotent, there is a non-trivial
complement to $[\fg,\fg] \subset \fg$
which is isomorphic to $\fg/[\fg,\fg]
\cong H^1(N;\Q)$. Duality then says that there is some $k$ with
$2 \leq k \leq k$ such that $dv_i=0$ for $i \leq k$.

The minimal model ${\mathcal M}_N$ is an exterior algebra
so, since ${\rm degree}(v_j)=1$ for $1 \leq j \leq n$,
the top degree of a non-zero element is $n$ and a vector space
generator is $v_1\cdot v_2\cdots v_n$. This element is
obviously a cocycle, so $H^n(N;\Q)=\Q$; thus, $N$ is orientable and
has dimension $n$. From the discussion above, we also see that
$\b_1(N) \leq \dim(N)$. In fact, by the discussion above and the following lemma, if
$\b_1(N)=\dim(N)$, then $N$ is diffeomorphic to a torus of rank $\b_1(N)$.
\end{example}

\begin{lemma}\label{lem:rhtnil}
If a nilmanifold $N$ has the rational homotopy type of a torus, then it is diffeomorphic to a torus.
\end{lemma}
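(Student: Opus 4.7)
The plan rests on translating the hypothesis into a statement about the rational nilpotent Lie algebra $\mathfrak{g}$ underlying $N$, and then observing that this statement forces $\mathfrak{g}$ to be abelian.

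First, I would write $N = G/\pi$ and let $\mathfrak{g}$ be its associated rational Lie algebra, so that by \thmref{thm:nilminmod} the minimal model of $N$ is the Chevalley--Eilenberg cdga $(\Lambda \mathfrak{g}^*, d)$ with all generators concentrated in degree one and $\dim_\bQ \mathfrak{g} = \dim N$. Since $N$ has the rational homotopy type of a torus $T^n$ and both spaces are nilpotent, \propref{prop:rhtype} produces a cdga isomorphism between their minimal models; by \examref{exam:tori} the minimal model of $T^n$ is $(\Lambda(x_1, \ldots, x_n), 0)$ with each $|x_i| = 1$. Counting degree-one generators on the two sides forces $\dim \mathfrak{g} = n$, and in particular $\dim N = n$.

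Next I would exploit the fact that a cdga isomorphism intertwines differentials. Since the torus-side differential is zero and the isomorphism restricts to a linear bijection on the degree-one part, the Chevalley--Eilenberg differential $d$ must vanish on every element of $\mathfrak{g}^*$. But on degree-one generators $d$ is by construction dual to the Lie bracket (see \examref{exam:nilman}), so $d \equiv 0$ on $\mathfrak{g}^*$ is equivalent to $[\mathfrak{g}, \mathfrak{g}] = 0$. Hence $\mathfrak{g}$ is abelian.

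Finally, I would promote this Lie-algebraic conclusion to the geometric one. A finite-dimensional simply connected abelian Lie group is isomorphic to $\bR^n$, so $G \cong \bR^n$ and $\pi$ is a lattice in $\bR^n$; the quotient $N = \bR^n/\pi$ is therefore diffeomorphic to $T^n$. I do not anticipate any real obstacle in this argument. The only point needing a line of care is the passage from ``cdga isomorphism of minimal models'' to ``$d$ vanishes on $\mathfrak{g}^*$ itself,'' but this is immediate from the fact that an iso is injective on degree-one generators and must send $dv$ to zero for every such $v$.
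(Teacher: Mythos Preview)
Your argument is correct and takes a genuinely different route from the paper's proof. The paper argues on the fundamental-group side: it uses the rationalization map $N \to N_\bQ$, observes that its kernel on $\pi_1$ consists of torsion, and hence that the torsion-free group $\pi_1(N)$ injects into the abelian group $\pi_1(N_\bQ)$; it then invokes Mostow rigidity to upgrade the resulting homotopy equivalence with a torus to a diffeomorphism. You instead work on the Lie-algebra side: using \thmref{thm:nilminmod} to identify the minimal model with the Chevalley--Eilenberg complex, you read off from the vanishing differential that $\mathfrak{g}$ is abelian, whence $G\cong\bR^n$ and $N=G/\pi\cong T^n$ directly. Your approach has the advantage of avoiding Mostow rigidity entirely---the classification of lattices in $\bR^n$ suffices---and it stays closer to the algebraic machinery already set up in \examref{exam:nilman}. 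The paper's approach, on the other hand, does not depend on \thmref{thm:nilminmod} and would adapt to situations where one only knows $\pi_1$ rather than an explicit Lie model.
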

\begin{proof}
The hypothesis says that the homotopical localization map has the form $\phi\colon N \to N_\Q \simeq_\Q T^k$. But
on the fundamental group level, the kernel of rationalization consists of torsion. Because $\pi_1(N)$ is torsion free,
$\phi_*$ is injective. But then $\pi_1(N)$ must be a finitely generated torsionfree \emph{abelian} group, hence
$N$ has the homotopy type of a torus of rank $\b_1(N)$. Mostow rigidity then says that $N$ is diffeomorphic
to such a torus.
\end{proof}

\begin{remark}
 It turns out that a nilmanifold is formal if and only if it has the same rational homotopy type of (hence it is diffeomorphic to) a torus.
\end{remark}

One important use of minimal models is that they allow the construction of new (rational homotopy) invariants or, in some cases, new descriptions of familiar invariants. One such invariant is the following, the Toomer invariant, which may be defined in terms of the Milnor-Moore spectral sequence classically, i.e., without using minimal models (see \cite{FHT}).

Let $X$ be a nilpotent space with minimal model $(\Lambda W,d)$ and denote by $\rho_s$ the projection
\[
\rho_s\colon \Lambda W \to \frac{\Lambda W}{\Lambda^{> s} W}
\]
where $\Lambda^{> s} W$ signifies all the words in generators $W$ of length greater than $s$.

\begin{definition}\label{def:e0}
The {\em Toomer invariant} $\e_0(X)$ is the largest $k$ such that the projection $\rho_{k-1}$
is \emph{not} injective on cohomology. Equivalently, $\e_0(X)$ is the smallest $k$ such that $\rho_k$ \emph{is} injective
on cohomology. Similarly, for a class $\tau\in H^*(X;\Q)$, $\e_0(\tau)$ is the largest $k$ such that $\rho_{k-1}^*(\tau)=0$ or the smallest $k$ such that $\rho_k^*(\tau) \not = 0$.
\end{definition}

Note that, for a space with finite dimensional rational cohomology, eventually the word length exceeds the top degree with non-zero
cohomology, so for some $k$, $\rho_k$ is injective on cohomology. We have the following result that allows us to compute $\e_0$ in many cases.

\begin{proposition}{\cite[Lemma 10.1]{FH}}\label{prop:tope0}
If $X$ obeys rational Poincar\'e duality in cohomology with top class $\tau$, then $\e_0(\tau)=\e_0(X)$. Moreover,
$${\sf{e_0}}(X)=\max\{k\,|\, \tau\ {\rm is\ represented\ by\ a\ cocycle\ in}\ \Lambda^{\geq k}W\}.$$
\end{proposition}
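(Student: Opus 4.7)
The plan is to reduce the computation of $\e_0(X)$ to a maximum over nonzero cohomology classes, and then use Poincaré duality to show that this maximum is realized by the top class $\tau$.

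First, I would unwind the definition of $\e_0(X)$. Since $\ker(\rho_k) = \Lambda^{>k} W$, a class $\alpha \in H^*(X;\bQ)$ satisfies $\rho_k^*(\alpha) = 0$ precisely when $\alpha$ admits a cocycle representative in $\Lambda^{>k} W$, which is exactly the condition $\e_0(\alpha) \geq k+1$. Hence $\rho_k$ is injective on cohomology iff $\e_0(\alpha) \leq k$ for every nonzero class $\alpha$, giving
\[
\e_0(X) \;=\; \max\{\, \e_0(\alpha) \,\mid\, 0 \neq \alpha \in H^*(X;\bQ) \,\}.
\]
In particular, $\e_0(\tau) \leq \e_0(X)$ is automatic.

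The heart of the argument is the reverse inequality, where Poincaré duality enters. Given any nonzero $\alpha \in H^*(X;\bQ)$, PD produces a class $\beta$ with $\alpha \cdot \beta = \tau$. I would choose a cocycle representative $a \in \Lambda^{\geq \e_0(\alpha)} W$ for $\alpha$ and any cocycle representative $b$ for $\beta$. Then $ab$ is a cocycle representing $\tau$, and it lies in $\Lambda^{\geq \e_0(\alpha)+\e_0(\beta)} W \subseteq \Lambda^{\geq \e_0(\alpha)} W$ since $\e_0(\beta) \geq 0$. Therefore $\e_0(\tau) \geq \e_0(\alpha)$ for every nonzero $\alpha$; taking the maximum yields $\e_0(\tau) \geq \e_0(X)$, so combining with the first step gives $\e_0(\tau) = \e_0(X)$. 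The \emph{Moreover} clause then follows by substituting this equality into the characterization of $\e_0(\tau)$ as the largest $k$ for which $\tau$ is representable in $\Lambda^{\geq k} W$.

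The main obstacle is this second direction: without Poincaré duality, a class $\alpha$ maximizing $\e_0(\alpha)$ need not be $\tau$, and there is no mechanism to transfer its hidden word length onto $\tau$. PD provides the required multiplicative partner $\beta$, and the elementary additivity of word length under products in the free graded-commutative algebra $\Lambda W$ is then enough to close the argument.
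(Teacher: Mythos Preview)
The paper does not supply its own proof of this proposition; it is quoted from \cite[Lemma~10.1]{FH} and used as a black box. Your argument is correct and is essentially the standard one: reduce $\e_0(X)$ to $\max_{\alpha\neq 0}\e_0(\alpha)$, then use Poincar\'e duality and the multiplicativity $\Lambda^{\geq i}W\cdot\Lambda^{\geq j}W\subset\Lambda^{\geq i+j}W$ to bound every $\e_0(\alpha)$ by $\e_0(\tau)$.

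One small point of exposition: the sentence ``Since $\ker(\rho_k)=\Lambda^{>k}W$, a class $\alpha$ satisfies $\rho_k^*(\alpha)=0$ precisely when $\alpha$ admits a cocycle representative in $\Lambda^{>k}W$'' slightly undersells the backward implication. Knowing $\ker(\rho_k)=\Lambda^{>k}W$ at the cochain level gives the forward direction immediately, but $\rho_k^*(\alpha)=0$ in cohomology only says $\rho_k(a)=\bar d\,\bar b$ in the quotient; you need to lift $\bar b$ to $b\in\Lambda^{\leq k}W$ and observe $a-d b\in\Lambda^{>k}W$ is a new representative. This uses that $d(\Lambda^{>k}W)\subset\Lambda^{>k}W$, which holds in a minimal model since $d$ raises word length. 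The step is routine, but as written your justification points to the kernel of the chain-level map rather than to this lifting.
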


For some spaces $X$ -- notably when $X$ is a \emph{formal} space (\defref{def:formal}) -- we have $\e_0(X) =  \cupl_{\Q}(X)$, which is the \emph{rational cup-length} of $X$. Namely,
$\cupl_{\Q}(X)$ is the longest non-zero product of elements in $H^*(X;\Q)$. But generally, $\e_0(X)$ is a finer invariant than $\cupl_{\Q}(X)$, and the inequality
$\cupl_{\Q}(X) \leq \e_0(X)$ is often strict. This is the case for most nilmanifolds: the only way in which we can have $\cupl_{\Q}(N) = \dim(N)$, for $N$ a nilmanifold,
is if we have $\b_1(N) = \dim(N)$, and from \lemref{lem:rhtnil} and the discussion preceding it, this corresponds to the case in which $N$ is rationally (actually up to diffeomorphism) a torus. For $\e_0(N)$, however, we have the following result.

\begin{corollary}\label{cor:e0nil}
If $N$ is a nilmanifold, then $\e_0(N)=\dim(N)$.
\end{corollary}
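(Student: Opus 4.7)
The proof is essentially a direct calculation using the structure of the minimal model of a nilmanifold described in Example \ref{exam:nilman}, together with \propref{prop:tope0}.

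The plan is as follows. Since $N$ is a closed orientable manifold, it satisfies rational Poincar\'e duality with top class $\tau \in H^n(N;\Q) \cong \Q$, where $n = \dim(N)$. Therefore \propref{prop:tope0} applies and gives
\[
\e_0(N) \;=\; \e_0(\tau) \;=\; \max\bigl\{\,k \;\big|\; \tau \text{ is represented by a cocycle in } \Lambda^{\geq k} W\,\bigr\}.
\]
So it suffices to identify the largest word length in which $\tau$ admits a cocycle representative.

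The discussion in Example \ref{exam:nilman} gives the minimal model ${\mathcal M}_N = (\Lambda(v_1,\ldots,v_n),d)$ with every generator $v_i$ in degree $1$, and identifies a cocycle representative of the fundamental class as $v_1 v_2 \cdots v_n$. This cocycle lies in $\Lambda^n W$, and so $\e_0(\tau) \geq n$. For the reverse inequality, observe that all generators are of odd degree $1$, so the exterior algebra $\Lambda(v_1,\ldots,v_n)$ vanishes in word lengths strictly greater than $n$. Consequently no cocycle of word length $>n$ can represent $\tau$, giving $\e_0(\tau) \leq n$.

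Combining the two inequalities yields $\e_0(N) = n = \dim(N)$, as claimed.

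The only substantive content is the appeal to Example \ref{exam:nilman} (that the minimal model is an exterior algebra on $n = \dim(N)$ degree-one generators and the top class is represented by the product of them), so there is no real obstacle; the argument is essentially a one-line consequence of \propref{prop:tope0}. The ``hard part'' is conceptual rather than technical: recognizing that for a nilmanifold the Toomer invariant is forced to be maximal precisely because the minimal model is generated in a single degree, so that cohomological degree and word length coincide on the fundamental class.
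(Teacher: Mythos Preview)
Your proof is correct and follows essentially the same approach as the paper: both invoke \propref{prop:tope0} and use the fact from \examref{exam:nilman} that the minimal model is an exterior algebra on $n=\dim(N)$ degree-one generators with top class represented by $v_1\cdots v_n$. The only cosmetic difference is that you use the ``max word-length'' formulation of \propref{prop:tope0}, whereas the paper argues directly with the projections $\rho_k$; these are equivalent by the proposition itself.
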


\begin{proof}
The minimal model of $N=G/\pi$ is given by $(\Lambda \fg^*,d)$ with $\fg^*=\langle v_1,\ldots,v_n\rangle$ and each $v_j$ of degree one. Because $\Lambda \fg^*$ is an exterior algebra on the $n$ generators $v_j$, the longest word in
$\Lambda V$ is given by $c=v_1 v_2 \cdots v_n \in (\Lambda \fg^*)^n$. Automatically then, this element $c$ is a cocycle. It is not a coboundary because $N$ is an orientable closed manifold of dimension the rank of the nilpotent fundamental group; that is, $n$. Hence, $c$ uniquely represents the top class of $H^*(N;\Q)$. By \propref{prop:tope0}, we only need to know $\e_0([c])$ to determine $\e_0(N)$. If $k<n$, then clearly $\rho_k^*([c])=0$ since words of length $n >k$ are killed.
On the other hand, $\rho_n^*([c]) \not = 0$ since only words of length longer than $n$ are killed. Thus, $\e_0([c])=n$.
\end{proof}

There is one important class of spaces whose cohomology obeys Poincar\'e duality. These are the (rationally) \emph{elliptic spaces}.
A space $X$ is \emph{elliptic} if both $\dim(H^*(X;\Q)) < \infty$ and $\dim(\pi_*(X)\otimes \Q) < \infty$. Homogeneous spaces
are prime examples of elliptic spaces. It turns out that elliptic spaces only come in two forms: either we have Euler characteristic $\chi=0$, or we have $\chi > 0$ (see \cite[Ch.32]{FHT} for this and other facts about elliptic spaces). If $\chi(X) > 0$, then
$X$ is called an $F_0$-space and is constrained by the facts that the minimal model has the form
$$(\Lambda(V^{\rm even} \oplus V^{\rm odd}),d), \ {\rm with}\ d(V^{\rm even})=0, \ d(V^{\rm odd}) \subset \Lambda^{\geq 2}(V^{\rm even}),$$
and $H^*(X;\bQ)$ is given by a polynomial algebra on even degree generators modulo an ideal generated by a regular sequence.

\subsection{Relative models}\label{subsec:relmodel}
There is a relative notion of model for fibrations. The algebraic basis for constructing such models is
as follows.

 \begin{definition}\label{def:relminmod}
A \emph{relative minimal cdga} is a morphism of cdga's of the form
\[
i \colon (A,d_A) \to (A\otimes \Lambda V,d)\,,
\]
where $i(a)= a$, $d_{\vert A} = d_A$, $d(V)\subset (A^+\otimes \Lambda V)
\oplus \Lambda^{\geq 2} V$, and such that $V$ admits a basis
$(x_{\alpha})$ indexed by a well-ordered set such that
$d(x_{\alpha}) \in A \otimes (\Lambda (x_\beta))_{ \beta <\alpha }$.
\end{definition}

When $(A,d_A)$ is a Sullivan cdga, we have $(A,d_A) = (\Lambda Z,d)$. Clearly, a relative minimal cdga $(A\otimes \Lambda V,d)
= (\Lambda (Z\oplus V),d)$ is also a Sullivan cdga, but the cdga $(\Lambda (Z\oplus V),d)$ is not necessarily a minimal cdga,
even if $(\Lambda Z,d)$ is a minimal cdga.
Relative Sullivan cdga's are in some sense the generic models for morphisms of cdga's. We make the role of relative minimal models
precise in the following theorem (see \cite[Section 14]{FHT}).

\begin{theorem}[{\it Relative version of \thmref{thm:existuniqminmod}}]\label{thm:relminmod}
Let $f \colon (A,d)\to (B,d)$ be a morphism of cdga's. We then have a commutative diagram
\[
 \xymatrix{
A\ar[r]^f\ar[dr]_-i&B\\
& (A \otimes \Lambda V,d)\ar[u]_g }
\]
where $i$ is a relative minimal cdga and $g$ is a quasi-isomorphism. This property characterizes $(A\otimes \Lambda V,d)$ up to isomorphism.
\end{theorem}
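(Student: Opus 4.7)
The plan is to establish existence by a stage-by-stage construction of $V$ and its differential, and uniqueness via a standard lifting argument in the category of cdga's under $A$; the scheme parallels the absolute case (\thmref{thm:existuniqminmod}), with $A$ playing the role of the ground field.

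For existence, I would build an increasing sequence $(A,d) = (A\otimes\Lambda V(0),d) \hookrightarrow (A\otimes\Lambda V(1),d) \hookrightarrow \cdots$ of relative minimal cdga's under $A$, together with compatible cdga maps $g_n\colon A\otimes\Lambda V(n)\to B$ extending $f$, arranged so that $H^k(g_n)$ is an isomorphism for $k\leq n$ and injective for $k = n+1$. Starting from $V(0)=0$ and $g_0=f$, the induction step produces $V(n+1)$ by adjoining two kinds of generators, both of degree $n+1$. First, for each element of a basis of a complement of $\mathrm{Im}\,H^{n+1}(g_n)$ inside $H^{n+1}(B)$, I would adjoin a closed generator $x_\alpha$ of degree $n+1$ with $g(x_\alpha) = b_\alpha$ a chosen representative; this restores surjectivity in degree $n+1$. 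Second, for each element of a basis of $\ker H^{n+2}(g_n)$, represented by a cocycle $z_\beta \in (A\otimes\Lambda V(n))^{n+2}$, I would adjoin a generator $y_\beta$ with $dy_\beta = z_\beta$ and $g(y_\beta) = c_\beta$, where $dc_\beta = g_n(z_\beta)$ exists by hypothesis; this kills the kernel in $H^{n+2}$. Because all new generators lie in degree $n+1$ and $V(n)$ has no elements of degree $\geq n+1$, the differentials automatically lie in the subspace $(A^+ \otimes \Lambda V(n)) \oplus \Lambda^{\geq 2}V(n)$, so the minimality condition of \defref{def:relminmod} is preserved; well-ordering the generators by stage supplies the required well-orderability. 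Taking $V = \bigcup V(n)$ and $g = \mathrm{colim}\, g_n$, and noting that cohomology commutes with this filtered colimit, gives the desired factorization.

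For uniqueness, suppose $i\colon A\to(A\otimes\Lambda V,d)$ with $g$ and $i'\colon A\to(A\otimes\Lambda W,d)$ with $g'$ are two factorizations of $f$. I would invoke the \emph{lifting property} for relative Sullivan cdga's, proved by transfinite induction along the well-ordering of the generators: for any surjective quasi-isomorphism $\pi\colon (X,d)\to(Y,d)$ of cdga's under $A$ and any cdga map $\varphi\colon A\otimes\Lambda V\to Y$ under $A$, there is a lift $\tilde\varphi\colon A\otimes\Lambda V\to X$ under $A$ with $\pi\tilde\varphi = \varphi$. After replacing $g$ and $g'$ (up to relative homotopy) by surjective quasi-isomorphisms via a relative mapping cylinder, this produces maps $\phi\colon A\otimes\Lambda V\to A\otimes\Lambda W$ and $\psi\colon A\otimes\Lambda W\to A\otimes\Lambda V$ over $A$ satisfying $g'\phi\simeq g$ and $g\psi \simeq g'$. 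A further application of the lifting property shows that $\psi\phi$ and $\phi\psi$ are each homotopic to the respective identities, so $\phi$ is a quasi-isomorphism; comparing the two relative minimal presentations degree by degree then forces $\phi$ to induce an isomorphism on the spaces of generators, hence to be a cdga isomorphism.

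The main obstacle is not a single deep step but the bookkeeping in the inductive existence argument: coordinating the simultaneous addition of surjection and injection generators at each stage while maintaining the well-orderability hypothesis of \defref{def:relminmod}, and verifying that the iterative repair at degree $n+2$ terminates without disturbing the already-settled lower degrees. The uniqueness argument is standard and follows the template of \cite[Chapter 14]{FHT}.
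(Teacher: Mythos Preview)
The paper does not give its own proof of this theorem: it is stated as background with a citation to \cite[Section 14]{FHT}, and the text moves on immediately. So there is no in-paper argument to compare your proposal against. Your outline is precisely the standard construction recorded in that reference---the degree-by-degree adjunction of generators to repair surjectivity and injectivity of $H^*(g_n)$ for existence, and the lifting lemma for relative Sullivan algebras followed by the ``quasi-isomorphism between minimal objects is an isomorphism'' step for uniqueness---so your plan is in line with what the paper defers to.

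One small caution on the existence side: your inductive hypothesis that $H^{n+2}(g_n)$ is injective is not automatically preserved after you adjoin the degree-$(n+1)$ generators, since products of the new $y_\beta$ with elements of $A^+$ can create fresh cocycles in degree $n+2$; in the general (non-simply-connected) setting the repair at each stage may itself require an internal induction rather than a single pass. This is exactly the ``bookkeeping'' you flag at the end, and it is handled carefully in \cite[\S14]{FHT}, but be aware that the one-shot description you give is an idealization of the actual argument.
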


Under the conditions of \thmref{thm:relminmod}, the map $i$ is called \emph{the relative minimal model} of $f$. Let's see now how
this applies to fibrations. Recall from \subsecref{subsec:anscbun} that a fibration $F \to E \to B$ is ($\Q$-)\emph{quasi-nilpotent} if $\pi_1(B)$ acts nilpotently on $H^*(F;\Q)$.

Now let $F\to E \stackrel{p}{\to} B$ be a $\Q$-quasi-nilpotent fibration. We form the following commutative diagram
\[
\xymatrix{
A_{PL}(B) \ar[r]^-p&A_{PL}(E)\ar[r]&A_{PL}(F)\\
(\Lambda V,d)\ar[u]^{\varphi}\ar[r]^-i& (\Lambda V\otimes \Lambda W,d)
\ar[u]^\psi\ar[r]^-\rho &  (\Lambda W,\bar{d}) \ar[u]_{\bar{\psi}}.
}
\]
Here the morphism $\varphi \colon (\Lambda V,d) \to A_{PL}(B)$ is the minimal model of $B$, $\psi$ is a quasi-isomorphism and
$(\Lambda V,d)\to (\Lambda V \otimes \Lambda W,d)$ is a relative minimal cdga. The cdga $(\Lambda W,\bar d)$ is the quotient cdga $(\Lambda V\otimes
\Lambda W,d)/ (\Lambda^+(V)\otimes\Lambda W)$ and the map $\rho$ is the quotient map. The map $\bar \psi$ is induced by the commutativity of
the left-hand square of the diagram.

\begin{theorem}{\cite[Theorem 15.3]{FHT}}\label{thm:fibmod}
Suppose $F\to E \stackrel{p}{\to} B$ is a $\Q$-quasi-nilpotent fibration. If $B$ and $F$ have finite Betti
numbers and $H^1(p)$ is injective, then the map $\bar \psi$ is a quasi-isomorphism, and the cdga $(\Lambda W,\bar d)$ is the minimal
model of the fibre $F$.
\end{theorem}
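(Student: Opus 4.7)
I would attack this in two stages: first showing that $(\Lambda W,\bar d)$ is itself a minimal Sullivan cdga, and then showing that $\bar\psi$ is a quasi-isomorphism. Minimality is essentially formal: the defining property of a relative minimal cdga gives $dw \in (\Lambda^+V\otimes\Lambda W)\oplus \Lambda^{\geq 2}W$ for each generator $w\in W$, so projecting modulo $\Lambda^+V\otimes\Lambda W$ yields $\bar d(W)\subset \Lambda^{\geq 2}W$, and the well-ordering on $W$ inherited from the relative model makes $(\Lambda W,\bar d)$ minimal in the sense of \subsecref{subsec:min}.

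For the quasi-isomorphism, the plan is a comparison of spectral sequences. On the algebraic side, I equip $(\Lambda V\otimes\Lambda W,d)$ with the decreasing multiplicative filtration $F^p=\Lambda^{\geq p}V\otimes\Lambda W$. Since $d(V)\subset\Lambda V$ and the $\Lambda^{\geq 2}W$-component of $dw$ lies in $F^0$, this filtration is $d$-stable; it yields a first-quadrant spectral sequence of algebras with $E_1=\Lambda V\otimes H^*(\Lambda W,\bar d)$ and $E_2=H^*(\Lambda V,d)\otimes H^*(\Lambda W,\bar d)$, converging strongly (thanks to the finite Betti number hypotheses on $B$ and $F$) to $H^*(\Lambda V\otimes\Lambda W,d)$. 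On the topological side, the rational Serre spectral sequence of $p$ has $E_2^{p,q}=H^p(B;\mathcal H^q(F;\Q))$ and abuts to $H^*(E;\Q)$. Because $\psi$ is constructed to be compatible with pullback from $A_{PL}(B)$ via $\varphi$, it preserves the corresponding filtrations and induces a map of spectral sequences whose $E_2$-map factors as $\varphi^*\otimes\bar\psi^*$ followed by the canonical comparison $H^*(B;\Q)\otimes H^*(F;\Q)\to H^*(B;\mathcal H^*(F;\Q))$.

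At this point the hypotheses enter decisively. Quasi-nilpotency provides a finite filtration of the local system $\mathcal H^*(F;\Q)$ by sub-local-systems with trivial monodromy on the associated graded; combined with injectivity of $H^1(p)$ (which controls the low-degree extension problems that could otherwise obstruct the identification) and the finite-Betti-number assumption, it forces the canonical map $H^*(B;\Q)\otimes H^*(F;\Q)\to H^*(B;\mathcal H^*(F;\Q))$ to be an isomorphism. Consequently the $E_2$-map is literally $\varphi^*\otimes\bar\psi^*$, while the map on abutments is the isomorphism induced by $\psi$. Since $\varphi^*$ is an isomorphism (because $\varphi$ is a minimal model of $B$), a version of the comparison theorem for spectral sequences of tensor-product type forces $\bar\psi^*$ to be an isomorphism in all degrees, as required.

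The main obstacle is the middle step: carefully matching the algebraic $V$-word-length filtration on $\Lambda V\otimes\Lambda W$ with the topological Serre filtration on $A_{PL}(E)$, and using quasi-nilpotency together with the injectivity of $H^1(p)$ to resolve the local coefficient system on $B$ into an honest tensor-product description on $E_2$. This is where all three technical hypotheses — quasi-nilpotency, injectivity of $H^1(p)$, and finite Betti numbers — are used simultaneously; once the identification on $E_2$ is in place, the spectral-sequence comparison concludes the proof almost mechanically.
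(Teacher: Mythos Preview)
The paper does not prove this statement: it is quoted from \cite[Theorem 15.3]{FHT} and used as a black box, so there is no ``paper's own proof'' to compare against. That said, your outline contains a genuine error that would make the argument fail even as a stand-alone proof.

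The problem is your identification of the $E_2$-terms. You assert that the algebraic filtration $F^p=\Lambda^{\geq p}V\otimes\Lambda W$ gives $E_2=H^*(\Lambda V,d_V)\otimes H^*(\Lambda W,\bar d)$, and then separately that quasi-nilpotency, together with injectivity of $H^1(p)$, forces the canonical map $H^*(B;\Q)\otimes H^*(F;\Q)\to H^*(B;\mathcal H^*(F;\Q))$ to be an isomorphism. Neither claim is correct. On the algebraic side the differential $d_1$ on $E_1=\Lambda V\otimes H^*(\Lambda W,\bar d)$ is \emph{not} $d_V\otimes 1$: it is $d_V\otimes 1$ \emph{plus} the twisting terms $\sum_i v_i\,\theta_i^*$ coming from \eqref{differential_D}, so $E_2$ is a twisted object, not a tensor product. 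On the topological side, a nilpotent but nontrivial $\pi_1(B)$-action on $H^*(F;\Q)$ does \emph{not} trivialise the local system; for instance, with $B=S^1$ acting on a two-dimensional $H^k(F;\Q)$ by a single Jordan block, $H^*(S^1;\mathcal H^k(F;\Q))$ has total dimension $2$, not $4$. The hypothesis on $H^1(p)$ plays no role here---its purpose in \cite{FHT} is to control degree-one phenomena in the construction of the relative model, not to untwist coefficients.

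A spectral-sequence comparison can be made to work, but the correct statement is that the algebraic $E_2$ and the Serre $E_2$ are \emph{both} the twisted cohomology $H^*(B;\mathcal H^*(F;\Q))$ (the $\theta_i^*$ encode precisely the monodromy), and one must show that $\psi$ induces an isomorphism already at this level; alternatively one argues, as in \cite{FHT}, by an inductive lifting construction through a Postnikov-type tower for the relative model, which sidesteps the local-coefficient bookkeeping entirely. Either way, the step where you collapse both sides to an untwisted tensor product is the gap.
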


There is one important example to keep in mind.

\begin{example}\label{exam:nilrel}
Suppose $X$ is a nilpotent space. That is, $\pi_1(X)=\pi$ is a nilpotent group
and $\pi$ acts nilpotently on the higher homotopy $\pi_{\geq 2}(X)$. Consider the fibration $\widetilde X \to X \to K(\pi,1)$ that classifies the
universal cover $\widetilde X$. By the discussion in \subsecref{subsec:anscbun}, we see that this fibration is quasi-nilpotent. By \thmref{thm:fibmod}
(noting that the $H^1(p)$ hypothesis is satisfied because $\pi_1(X) = \pi$), there is a relative model of the form
\[
(\Lambda V,d_V) \to (\Lambda V\otimes \Lambda W,D) \to (\Lambda W, d_W)
\]
where $(\Lambda V,d_V)$ is a model for $K(\pi,1)$ and $(\Lambda W, d_W)$ is a model for $\widetilde X$. By the general form of relative models, the differential $D$ has the form (for $v \in V$, $\chi \in \Lambda W$ and $\{v_1,\ldots,v_k\}$ a basis for $V$)
\[
 D(v)=d_V(v)\ {\rm and}\ D(\chi)=d_W(\chi) + \sum v_i\,\theta_i(\chi) + \chi_2
\]
for $\chi\in\Lambda W$, with $\chi_2\in\Lambda^{\geq 2}V\otimes \Lambda W$. We may use this to define linear maps $\theta_i$ of $\Lambda W$ for each $i$. Notice that since $\widetilde X$ is simply connected, $D$ has no linear part and $(\Lambda V\otimes \Lambda W,D)$ is actually the minimal model of $X$. Now a standard calculation shows that each $\theta_i$ is a degree-zero derivation of $\Lambda W$ that satisfies $\theta_i\circ d_W=d_W\circ \theta_i$: Since $D$ satisfies the Leibniz rule, we may equate like terms in $D(\chi \chi') = D(\chi)\chi' + (-1)^{|\chi|} \chi D(\chi')$ to show that $\theta_i$ is a derivation; we may expand out $D\big(D(\chi)\big) =0$ and equate like terms to show that $\theta_i\circ d_W=d_W\circ \theta_i$. This is the fundamental structure of the ANSC-type models we shall consider below.
\end{example}


\section{The Second Reduction: Rational Homotopy and ANSC}\label{sec:cat}
\subsection{Inequalities for ANSC bundles}\label{subsec:main}
We work in slightly greater generality than ANSC-bundles. Here we allow the fibre of a fibration to be a simply connected Poincar\'e duality space over $\Q$. Recall that
this means that $H^*(F;\Q)$ obeys Poincar\'e duality with respect to a top class $[\mu] \in H^{\dim\, F}(F;\Q)\cong \Q$. From the discussion in Subsection \ref{subsec:anscbun}, we see that ANSC bundles satisfy the hypotheses of \propref{prop:cat} below. This result is a particular case of a general result about rational category due to Jessup (see \cite[Theorem 9.6]{FHT2}, or \cite[Proposition 9.7]{FHT2}, for the non-simply-connected version). Since we only deal with our particular type of structure, we are able to give a direct proof, that avoids much of the technical background used for the proof of \cite[Theorem 9.6]{FHT2} and also draws out the connections between the ANSC structure and rational invariants.

\begin{proposition}\label{prop:cat}
Let $F\to E\xrightarrow{p} N$ be a quasi-nilpotent fibration with $F$ a simply connected Poincar\'e duality space and $N$ a
nilmanifold. Then $\e_0(E)\geq \e_0(F)+\dim(N)$.
\end{proposition}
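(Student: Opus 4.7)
The plan is to construct, inside the relative Sullivan model of the fibration $F \to E \xrightarrow{p} N$, an explicit cocycle of high word length representing a nonzero cohomology class of top degree in $H^*(E;\Q)$, and then read off the Toomer invariant from \defref{def:e0}.

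First, by \thmref{thm:fibmod} one obtains a relative minimal model
\[
(\Lambda V, d_V) \longrightarrow (\Lambda V \otimes \Lambda W, D) \longrightarrow (\Lambda W, d_W),
\]
where $(\Lambda V, d_V)$ is the Chevalley-Eilenberg model of $N$, an exterior algebra on degree-one generators $v_1, \ldots, v_n$ with $n = \dim(N)$, and $(\Lambda W, d_W)$ is the minimal model of $F$. As recalled in \examref{exam:nilrel}, $D$ satisfies $D(v_i) = d_V(v_i) \in \Lambda^{\geq 2}V$ and $D(\chi) = d_W(\chi) + \sum_i v_i\,\theta_i(\chi) + \chi_2$ for $\chi \in \Lambda W$, with $\chi_2 \in \Lambda^{\geq 2}V \otimes \Lambda W$; since $F$ is simply connected, $(\Lambda V \otimes \Lambda W, D)$ is in fact the minimal model of $E$.

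Next, set $\mu = v_1 v_2 \cdots v_n$. By the proof of \corref{cor:e0nil}, $\mu$ is a cocycle in $(\Lambda V, d_V)$ representing the orientation class of $N$, and by \propref{prop:tope0} applied to $F$ one may choose a cocycle $\omega \in \Lambda^{\geq \e_0(F)} W$ representing the top Poincar\'e duality class of $F$. The candidate cocycle in the model of $E$ is $\mu \cdot \omega$, of word length at least $n + \e_0(F)$. A short computation confirms that $\mu\omega$ is a $D$-cocycle: indeed $D(\mu) \in \Lambda^{\geq n+1}V = 0$, while every summand of $D(\omega) - d_W(\omega) = \sum_i v_i\,\theta_i(\omega) + \chi_2$ contains a factor $v_i$ with $\mu v_i = 0$ in the exterior algebra $\Lambda V$, and $d_W(\omega) = 0$; the Leibniz rule then yields $D(\mu\omega) = 0$.

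The main obstacle is verifying that $[\mu\omega] \neq 0$ in $H^*(E;\Q)$. For this I would exploit the decreasing filtration $\mathcal F^p = \Lambda^{\geq p} V \otimes \Lambda W$ on the model of $E$: its associated spectral sequence is naturally isomorphic to the Serre spectral sequence of $F \to E \to N$, with $E_2^{p,q} = H^p(N; H^q(F;\Q))$, the twist being controlled by the (nilpotent) $\pi_1(N)$-action on $H^*(F;\Q)$. Because any nilpotent action on the one-dimensional space $H^{\dim F}(F;\Q) \cong \Q$ is necessarily trivial, one has $E_2^{n,\dim F} \cong \Q$, generated by $[N]\otimes[F]$. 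For positional reasons---no cohomology of $N$ above degree $n$ and no cohomology of $F$ above degree $\dim F$---no higher differential enters or leaves this corner, so $E_\infty^{n,\dim F} \cong \Q$. The cocycle $\mu\omega$ lies in $\mathcal F^n$, and its image in $\mathcal F^n/\mathcal F^{n+1}$ is exactly $\mu \otimes \omega$, representing $[N]\otimes[F]$; hence $[\mu\omega] \neq 0$ in $H^{n+\dim F}(E;\Q)$. Finally, since $[\mu\omega]$ is a nonzero class and $\mu\omega$ lies in $\Lambda^{\geq n + \e_0(F)}(V \oplus W)$, the projection $\rho_{n + \e_0(F) - 1}$ annihilates $[\mu\omega]$ in cohomology, so $\rho_{n + \e_0(F) - 1}^*$ fails to be injective. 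By \defref{def:e0}, $\e_0(E) \geq n + \e_0(F) = \e_0(F) + \dim(N)$.
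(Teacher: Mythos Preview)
Your proof is correct and arrives at the same candidate cocycle $v_1\cdots v_n\cdot\omega$ as the paper, with the same preliminary observations (choice of $\omega\in\Lambda^{\geq \e_0(F)}W$ via \propref{prop:tope0}, and the verification that $D(v_1\cdots v_n\,\omega)=0$). The genuine difference is in how you establish $[v_1\cdots v_n\,\omega]\neq 0$.

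The paper argues by hand: it supposes $v_1\cdots v_n\,\mu = D(\eta_0+\cdots+\eta_n)$ with $\eta_i\in\Lambda^iV\otimes\Lambda W$, and then peels off the terms $\eta_0,\ldots,\eta_{n-2}$ one by one using that each is a $d_W$-cocycle of degree $>m=\dim F$, hence $d_W$-exact. The residual case $D(\eta_{n-1}+\eta_n)=v_1\cdots v_n\,\mu$ is then dispatched by writing $\eta_{n-1}=\sum_i v_1\cdots\widehat{v_i}\cdots v_n\,\beta_i$ and using $\theta_i^*([\mu])=0$ (nilpotency on the one-dimensional $H^m(F;\Q)$) to force $\mu$ to be $d_W$-exact, a contradiction.

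Your route packages the same ingredients into the spectral sequence of the filtration $\mathcal F^p=\Lambda^{\geq p}V\otimes\Lambda W$: the nilpotency of $\theta_i^*$ on $H^m(F;\Q)\cong\Q$ becomes triviality of the local system in top fibre degree, so $E_2^{n,m}\cong\Q$, and the corner position forces $E_\infty^{n,m}\cong\Q$; since $v_1\cdots v_n\otimes\omega$ represents the generator there, $[v_1\cdots v_n\,\omega]\neq 0$. This is shorter and more conceptual, and it makes transparent why the class survives (it sits where no differential can reach it). The paper's argument, by contrast, is more self-contained---it avoids invoking the identification with the Serre spectral sequence and keeps everything at the level of explicit cochain manipulations, which fits the authors' stated aim of giving a direct proof that ``draws out the connections between the ANSC structure and rational invariants.'' Both proofs ultimately hinge on exactly the same fact: a nilpotent derivation of a one-dimensional vector space vanishes.
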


\begin{proof}
Let $n$ be the dimension of $N$. Then the minimal model of $N$ is an exterior algebra $(\Lambda V,d_V)$ with $V$ an $n$-dimensional vector space generated in degree 1, $V=\langle v_1,\ldots,v_n\rangle$, and $d_V\colon V\to \Lambda^2 V$. Let $(\Lambda W,d_W)$ be the minimal model of the fiber. Since the fibration is quasi-nilpotent, there is a relative model for the projection $p$:
\[
(\Lambda V,d_V)\to (\Lambda V\otimes \Lambda W,D)\to (\Lambda W,d_W)\,,
\]
where $D=d_V$ on $\Lambda V\subset \Lambda V\otimes \Lambda W$ and we may write
\begin{equation}\label{differential_D}
D\chi= d_W\chi+\sum_{i=1}^nv_i\theta_i(\chi)+\chi_2\,,
\end{equation}
as in \examref{exam:nilrel}.
As pointed out there, each $\theta_i$ is a degree-zero derivation that satisfies $\theta_i\circ d_W=d_W\circ \theta_i$ and thus induces a derivation $\theta_i^*$ on $H^*(\Lambda W)=H^*(F;\bQ)$. Moreover, $\theta_i^*$ is a nilpotent derivation, since we are dealing with a KS-model. If $m$ is the cohomological dimension of $F$, it follows from Poincar\'e duality that $\dim H^m(F;\bQ)=1$, hence $H^m(\Lambda W)=\langle [\mu]\rangle$ for some cocycle $\mu\in(\Lambda W)^m$, and $\theta_i([\mu])=0$ by nilpotency.

Notice, further, that if we have $\e_0(F)=\e_0([\mu])=r$, then we may assume that the representative cocycle $\mu$ satisfies $\mu \in \Lambda^{\geq r} W$. For suppose $\mu = \mu_{r-1} + \mu_r + \mu_{r+1}$, where $\mu_{r-1} \in \Lambda^{\leq r-1}$, $\mu_{r} \in \Lambda^{r}$, and $\mu_{r+1} \in \Lambda^{\geq r+1}$. From \defref{def:e0}, $\e_0(F)=r$ is the smallest $k$ such that $\rho_k^*([\mu]) \not = 0$. Therefore, we have $\rho_{r-1}^*([\mu])  = 0$. Now $\rho_{r-1}(\mu)= \mu_{r-1}$, and it follows that in $\Lambda W$, we have some $\eta$ with $d_W(\eta) = \mu_{r-1} + \mu'_r$, where now $\mu'_r \in \Lambda^{\geq r} W$. So we may take the cocycle representative of $[\mu]$ to be $\mu - d_W\eta \in \Lambda^{\geq r} W$.

Now, consider the element $v_1\cdots v_n\mu\in (\Lambda V\otimes\Lambda W)^{n+m}$. It is a cocycle, hence defines a cohomology class $[v_1\cdots v_n\mu]\in H^{m+n}(E;\bQ)$. We shall prove that it is nonzero. By \propref{prop:tope0}, since $v_1\cdots v_n\mu\in\Lambda^{\geq n+r}(V\oplus W)$, this is enough to guarantee that $\e_0(E)\geq \e_0(F)+\dim(N)$. In the notation of \thmref{thm:replace} and \remref{rem:orientability}, note that we are proving here that the total space of the bundle $F\to\overline M\to N$ has a rational top class and, hence, is orientable.

Next, we proceed to show that $[v_1\cdots v_n\mu]\neq 0$. To do so, we first suppose that $v_1\cdots v_n\mu=D(\eta_n)$, with $\eta_n\in\Lambda^nV\otimes\Lambda W$; hence $\eta_n=v_1\cdots v_n\tau$ for some $\tau\in\Lambda W$. Since $v_1\cdots v_n$ represents the fundamental class of $N$ and $D$ coincides with $d_V$ on $\Lambda V$, $D(v_1\cdots v_n)=0$, hence, in view of \eqref{differential_D},
\[
D(\eta_n)=(-1)^nv_1\cdots v_n D(\tau)=(-1)^nv_1\cdots v_n(d_W\tau)\,.
\]
But this implies that $\mu=d_W\tau$, which contradicts the assumption that $[\mu]$ is a fundamental class in $H^m(\Lambda W)$.

Next, suppose $n\geq 2$ and $v_1\cdots v_n\mu=D(\eta_0+\cdots+\eta_{n-1}+\eta_n)$, with $\eta_i\in\Lambda^i V\otimes \Lambda W$. Notice that $D(\Lambda^i V\otimes \Lambda W)\subset\Lambda^{\geq i} V\otimes \Lambda W)$. We have
\[
D(\eta_0)=d_W\eta_0+\textrm{terms in }\Lambda^{\geq 1}V\otimes\Lambda W\,.
\]
Of all the terms coming from $D(\eta_0+\cdots+\eta_{n-1}+\eta_n)$, $d_W\eta_0$ is the only one in $\Lambda W$, hence it must be zero, since $D(\eta_0+\cdots+\eta_{n-1}+\eta_n)=v_1\cdots v_n\mu$. This means that $\eta_0$ is a $d_W$-cocycle of degree $n+m-1>m$, hence it must be exact, $\eta_0=d_W\alpha_0$; now, by \eqref{differential_D},
\[
D(\alpha_0)=d_W\alpha_0+\xi_1=\eta_0+\xi_1\,,
\]
where $\xi_1\in\Lambda^{\geq 1} V\otimes\Lambda W$; thus
\begin{align*}
D(\eta_0+\cdots+\eta_{n-1}+\eta_n)&=D(D(\alpha_0)-\xi_1)+D(\eta_1+\cdots+\eta_{n-1}+\eta_n)=\\
&=D(\eta_1'+\cdots+\eta_{n-1}'+\eta_n')\,,
\end{align*}
and we can get rid of the degree 0 term. We can play the same trick to remove the $\eta_i$ term as long as $n+m-1-i>m$, which holds for $i<n-1$. For $n \geq 2$, then, if $v_1\cdots v_n\mu$ is to be a boundary, then we must have $D(\eta_{n-1}+\eta_n)=v_1\cdots v_n\mu$. Of course, if $n=1$ and $v_1\mu$ is to be a boundary, then we must have $D(\eta_{0}+\eta_1)=v_1\mu$.

Thus, the last case to discuss is $D(\eta_{n-1}+\eta_n)=v_1\cdots v_n\mu$, for $n \geq 1$. We may write $\eta_{n-1}=\sum_{i=1}^n v_1\cdots \widehat{v_i}\cdots v_n\beta_i$, where $\widehat{v_i}$ means $v_i$ is omitted, and $\beta_i\in (\Lambda W)^m$. Also, write $\eta_n = v_1\cdots v_n\tau$. Because $d_V \colon V \to \Lambda^2V$, we have $d_V(v_1\cdots \widehat{v_i}\cdots v_n) = 0$. So we compute
\begin{align*}
D(\eta_{n-1} + \eta_n)&=(-1)^{n-1}\sum_{i=1}^n v_1\cdots \widehat{v_i}\cdots v_nD(\beta_i) + (-1)^n v_1\cdots v_nD(\tau)\\
&=(-1)^{n-1}\sum_{i=1}^n v_1\cdots \widehat{v_i}\cdots v_n(d_W(\beta_i)+v_i\theta_i(\beta_i))\\
&+ (-1)^n v_1\cdots v_n d_W(\tau)\,,
\end{align*}
and infer from this that $d_W\beta_i$ vanishes for $i=1,\ldots,n$. Each $\beta_i$ defines a cohomology class $[\beta_i]\in H^m(\Lambda W)=\langle [\mu]\rangle$. This implies that $\theta_i(\beta_i)=d_W\gamma_i$, since $\theta^*_i([\mu]) = 0$. Hence
\[
D(\eta_{n-1}+\eta_n)=(-1)^{n-1}v_1\cdots v_nd_W\left(\sum_{i=1}^n\,(-1)^{n-i}\gamma_i-\tau\right)\,,
\]
which would again imply the exactness of $\mu$. So $v_1\cdots v_n\mu$ is not a boundary, and the result follows.
\end{proof}

\subsection{Geometric consequences: Bochner-like theorems}\label{subsec:bochner}
In this subsection, we will make use of the notion of Lusternik-Schnirelmann category for a space $X$. LS category $\cat(X)$ is a homotopy invariant defined as
the least integer $k$ such that there exists an open cover of $X$, $U_0,\ldots,U_k$ with the property that the inclusion of each $U_j$ in $X$ is nullhomotopic.
The only facts that we shall need are the following:
\begin{enumerate}
 \item $\cat(X) \leq \dim(X)$;
 \item $\e_0(X) \leq \cat(X)$;
 \item if $\overline X \to X$ is a covering map, then $\cat(\overline X) \leq \cat(X)$.
\end{enumerate}
These standard facts may be found in \cite{CLOT}. Now we can state a simple consequence of \propref{prop:cat}.

\begin{theorem}\label{cat:theorem}
Let $M$ be a compact manifold with almost nonnegative sectional curvature and let $\overline{M}\to M$ be a finite covering with associated ANSC-bundle
$F\to\overline{M}\xrightarrow{p} N$. Then
\[
\cat(M)\geq \e_0(\widetilde{M})+\dim(N)
\]
where $\widetilde M$ is the universal cover of $M$.
\end{theorem}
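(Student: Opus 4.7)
The plan is to chain together \propref{prop:cat} with the three listed standard facts about LS-category and the homotopy equivalence between the fibre $F$ and the universal cover $\widetilde M$ noted in \remref{rem:equivfib}.

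First, I would invoke \thmref{thm:replace}: the finite cover $\overline M \to M$ sits as the total space of an ANSC-bundle $F \to \overline M \xrightarrow{p} N$ of the type to which \propref{prop:cat} applies (quasi-nilpotent with $N$ a nilmanifold and $F$ a simply connected closed manifold, hence a simply connected Poincar\'e duality space over $\bQ$). Applying \propref{prop:cat} directly gives
\[
\e_0(\overline M) \geq \e_0(F) + \dim(N).
\]

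Next, I would use \remref{rem:equivfib}, which says that the bundle $F \to \overline M \to N$ is homotopy equivalent to the classifying fibration $\widetilde M \to \overline M \to K(\pi,1)$ for the universal cover. In particular $F \simeq \widetilde M$, so they have the same minimal model and therefore $\e_0(F) = \e_0(\widetilde M)$. Substituting, we obtain $\e_0(\overline M) \geq \e_0(\widetilde M) + \dim(N)$.

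Finally, I would chain this with the two monotonicity properties of LS-category listed in \subsecref{subsec:bochner}: $\e_0(X) \leq \cat(X)$ for any space, and $\cat(\overline M) \leq \cat(M)$ because $\overline M \to M$ is a finite (hence genuine) covering map. Together these give
\[
\cat(M) \geq \cat(\overline M) \geq \e_0(\overline M) \geq \e_0(\widetilde M) + \dim(N),
\]
which is the claimed inequality. There is no real obstacle here; the proof is essentially an assembly of already-established pieces, and the only thing to be careful about is the identification $\e_0(F)=\e_0(\widetilde M)$ coming from \remref{rem:equivfib}, since a priori $\widetilde M$ is non-compact while $F$ is compact — but rational homotopy invariants only see the homotopy type, so this is unproblematic.
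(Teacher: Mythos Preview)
Your proposal is correct and follows essentially the same route as the paper's own proof: invoke \thmref{thm:replace} to ensure \propref{prop:cat} applies, obtain $\e_0(\overline M)\geq \e_0(F)+\dim(N)$, identify $\e_0(F)=\e_0(\widetilde M)$ via \remref{rem:equivfib}, and then chain with $\cat(M)\geq\cat(\overline M)\geq\e_0(\overline M)$. The paper's argument is identical in structure and differs only in brevity.
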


\begin{proof}
By Theorem \ref{thm:replace} $\overline M$ is nilpotent and $p$ is quasi-nilpotent, so we can then apply Proposition \ref{prop:cat} to obtain
\[
\e_0(\overline{M})\geq \e_0(F)+\dim(N)\,.
\]
By the properties of category listed above and the fact that $F \to \overline M \to N$ is equivalent to the universal cover fibration with $F \simeq \widetilde M$, we have
\[
\cat(M)\geq\cat(\overline{M})\geq \e_0(\overline{M})\geq \e_0(F)+\dim(N) = \e_0(\widetilde M) + \dim(N)\,.
\]
\end{proof}
Although \thmref{cat:theorem} follows immediately from \propref{prop:cat}, it is a powerful constraint on the structure of $M$. Here is a more
intrinsic form of the inequality under an extra hypothesis.

\begin{corollary}
Let $M$ be a compact manifold with almost nonnegative sectional curvature and assume that $\pi_1(M)$ is torsion-free. Then the
cohomological dimension $\cd(\pi_1(M))$ of $\pi_1(M)$ is finite, and
\[
\cat(M)\geq \e_0(\widetilde{M})+\cd(\pi_1(M))\,.
\]
\end{corollary}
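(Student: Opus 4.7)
The plan is to reduce the corollary to \thmref{cat:theorem} by identifying $\dim(N)$ with $\cd(\pi_1(M))$ under the torsion-free hypothesis. First, I would apply \thmref{thm:replace} to obtain a finite cover $\overline{M}\to M$ fitting into an ANSC-type bundle $F\to\overline{M}\to N$. Since $F$ is simply connected, the long exact sequence of the bundle gives $\pi_1(\overline M) \cong \pi_1(N) = \pi$, which is a torsion-free, finitely generated nilpotent group (as a lattice in a simply connected nilpotent Lie group) of finite index in $\pi_1(M)$.

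The key computation is that $\cd(\pi) = \dim(N)$. Since $N = K(\pi,1)$ is a closed orientable $n$-dimensional manifold (where $n = \dim N$), one has $H^n(\pi;\Z) = H^n(N;\Z) = \Z \neq 0$, so $\cd(\pi) \geq n$. The opposite inequality $\cd(\pi)\leq n$ is immediate from the existence of the $n$-dimensional $K(\pi,1)$-model $N$. To transfer this to the full fundamental group, I would invoke Serre's theorem on cohomological dimension: if $G$ is a torsion-free group and $H\leq G$ has finite index, then $\cd(G) = \cd(H)$. Applied to $G = \pi_1(M)$ (torsion-free by hypothesis) and $H = \pi$ (of finite index), this yields
\[
\cd(\pi_1(M)) \; = \; \cd(\pi) \; = \; \dim(N) \; < \; \infty,
\]
which establishes the finiteness assertion.

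The final step is simply to substitute $\dim(N) = \cd(\pi_1(M))$ into the inequality of \thmref{cat:theorem}, producing
\[
\cat(M)\;\geq\; \e_0(\widetilde M) + \dim(N) \;=\; \e_0(\widetilde M) + \cd(\pi_1(M)).
\]

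The only real obstacle is correctly identifying the torsion-free hypothesis as exactly what is needed to deploy Serre's theorem (without it, the torsion in $\pi_1(M)$ would force $\cd(\pi_1(M))=\infty$ even though the finite-index subgroup $\pi$ has $\cd(\pi) = \dim(N)$ finite). Everything else is a direct chaining of \thmref{thm:replace}, standard facts about $K(\pi,1)$ manifolds, and \thmref{cat:theorem}.
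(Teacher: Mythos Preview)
Your proposal is correct and follows essentially the same route as the paper: both identify $\dim(N)$ with the cohomological dimension of the finite-index torsion-free nilpotent subgroup $\pi$, invoke Serre's theorem to transfer this to $\cd(\pi_1(M))$ under the torsion-free hypothesis, and then substitute into \thmref{cat:theorem}. Your write-up is slightly more explicit about why $\cd(\pi)=\dim(N)$, but the argument is the same.
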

\begin{proof}
If a torsion-free group $G$ has a finite index subgroup $H$ of finite cohomological dimension, then, by a result of Serre \cite[Chapter VIII, Theorem 3.1]{Brown},
$G$ itself has finite cohomological dimension, and $\cd(G)=\cd(H)$. By \cite[Theorem C]{KPT} $\pi_1(M)$ has a finite index subgroup which is finitely generated,
torsion-free and nilpotent. The cohomological dimension of such a group is precisely the dimension of the associated nilmanifold, and the conclusion then
follows from Theorem \ref{cat:theorem}.
\end{proof}
Recall that Bochner's Theorem states that a compact manifold $M$ with non-negative Ricci curvature obeys a Betti number condition: $\b_1(M) \leq \dim(M)$.
This type of inequality was refined in \cite{Op} (also see \cite{OS}) with $\dim(M)$ being replaced by $\cat(M)$. Here we have sharper information which is a topological analogue of Yamaguchi's theorem \cite{Yam}.

\begin{corollary}\label{cor:bcat}
Let $M$ be a compact manifold with almost nonnegative sectional curvature and assume that $\b_1(M)=\cat(M)$. Then $M$ is homeomorphic to the torus $T^{\b_1(M)}$.
\end{corollary}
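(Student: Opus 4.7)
The approach is to combine \thmref{cat:theorem} with a transfer/Serre‑sequence argument to pin down every rational invariant in the chain and so reduce the statement to Yamaguchi's theorem. I first pass to the finite cover $\overline M \to M$ with associated ANSC‑bundle $F \to \overline M \to N$ supplied by \thmref{thm:replace}. The transfer for a finite cover gives an injection $H^1(M;\bQ) \hookrightarrow H^1(\overline M;\bQ)$, so $\b_1(M) \leq \b_1(\overline M)$; since $F$ is simply connected the Serre spectral sequence forces $\b_1(\overline M) = \b_1(N)$; and the minimal‑model description in \examref{exam:nilman} yields the nilmanifold bound $\b_1(N) \leq \dim(N)$. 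Altogether, $\b_1(M) \leq \dim(N)$.

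Next I feed the hypothesis $\b_1(M) = \cat(M)$ into \thmref{cat:theorem} to obtain
\[
\b_1(M) \;=\; \cat(M) \;\geq\; \e_0(\widetilde M) + \dim(N) \;\geq\; \dim(N).
\]
Chaining this with the previous estimate collapses every inequality to an equality; in particular $\b_1(N) = \dim(N)$ and $\e_0(\widetilde M) = 0$. The latter vanishing means (\defref{def:e0}) that $\rho_0 \colon \Lambda W \to \bQ$ is injective on cohomology, so $H^{>0}(F;\bQ) = 0$. Since $F$ is a simply connected closed manifold, Poincar\'e duality forces $\dim F = 0$, i.e.\ $F$ is a point. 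The bundle then degenerates to a homeomorphism $\overline M \cong N$, and from $\b_1(N) = \dim(N)$ together with \lemref{lem:rhtnil} (or the final paragraph of \examref{exam:nilman}) we conclude $\overline M \cong N \cong T^n$ with $n = \b_1(M) = \dim(M)$.

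At this point the hypotheses of Yamaguchi's theorem \cite{Yam} for the original ANSC manifold $M$ are in place (namely $\b_1(M) = \dim(M)$), and I invoke it to conclude that $M$ is homeomorphic to $T^{\b_1(M)}$. The main obstacle I expect is the clean implication $\e_0(\widetilde M) = 0 \Rightarrow F = \{\mathrm{pt}\}$: this is what promotes $\overline M$ itself, rather than merely a further cover, to a torus, and what delivers the dimension equality on $M$ needed to feed Yamaguchi. The remainder is routine bookkeeping with the transfer homomorphism, the Serre spectral sequence of the ANSC‑bundle, and the rational‑invariant estimates already established in \secref{sec:cat}.
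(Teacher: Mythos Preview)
Your argument is correct, and up through the deduction $\e_0(\widetilde M)=0$, $\b_1(N)=\dim(N)$ it is essentially identical to the paper's proof (the paper phrases the transfer/Serre step simply as ``$\b_1(N)=\b_1(\overline M)\geq \b_1(M)$''). Your observation that $\e_0(\widetilde M)=0$ forces $H^{>0}(F;\bQ)=0$ and hence, by Poincar\'e duality on the simply connected closed manifold $F$, that $F$ is a point, is clean and correct; the paper draws the equivalent conclusion that $\widetilde M$ is contractible and $M$ is aspherical.

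The genuine divergence is in the endgame. You use $F=\{\mathrm{pt}\}$ to get $\overline M\cong N$, hence $\dim(M)=\b_1(M)$, and then invoke Yamaguchi's theorem \cite{Yam} to finish. The paper deliberately avoids Yamaguchi: it instead uses \lemref{lem:group} to show that a torsionfree group $\pi$ containing $\bZ^n$ with finite index and with $\b_1(\pi)=n$ must itself be $\bZ^n$, so that the aspherical manifold $M$ is a closed $K(\bZ^n,1)$ and hence homeomorphic to $T^n$. The paper's route keeps the argument purely homotopy/group theoretic and is framed precisely as a ``topological analogue'' of Yamaguchi (see \remref{rem:diffeoM}, where Yamaguchi is mentioned only as an alternative path to the stronger diffeomorphism conclusion). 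Your route is shorter but leans on the very geometric theorem the corollary is meant to parallel; logically it is fine, since you have legitimately reduced the weaker hypothesis $\b_1(M)=\cat(M)$ to Yamaguchi's hypothesis $\b_1(M)=\dim(M)$.
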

\begin{proof}
Set $\b_1(M)=n$ and let $F\to\overline{M}\to N$ be the fiber bundle structure of the finite covering $\overline{M}\to M$. Notice that
$\b_1(N)=\b_1(\overline M) \geq \b_1(M)=n$ since $F$ is simply connected and $\overline M$ finitely covers $M$.
For a nilmanifold one always has $\dim(N)\geq \b_1(N)$ by the discussion within
\examref{exam:nilman}, so by Theorem \ref{cat:theorem},
\[
\cat(M)\geq \e_0 ({\widetilde{M}})+\dim(N)\geq \e_0(\widetilde{M})+\b_1(N) \geq \e_0(\widetilde{M})+\b_1(M)
\]
The hypothesis then implies that $\e_0(\widetilde{M})=0$. But this means that $\widetilde{M}$ is contractible since $\widetilde M \simeq F$ and $F$ is an
orientable closed manifold with non-zero top class in $H^*(F;\Q)$. If $\widetilde M$ is contractible, then $M$ is a $K(\pi,1)$. Moreover, the above equalities also imply that $\b_1(N)=\dim(N)$. By \lemref{lem:rhtnil}, this can only happen if $N$ is diffeomorphic to a torus $T^{\b_1(N)}$. We then have a finite covering $T^{\b_1(N)}\to M$ which gives an injection $\bZ^n\hookrightarrow\pi$, hence $n=\b_1(N)=\b_1(M)=\b_1(\pi)$. By lemma \ref{lem:group} below, $\pi\cong\bZ^n$, hence $M$ is homeomorphic to a torus.
\end{proof}

\begin{remark}\label{rem:diffeoM}
If we somehow knew that $M$ was a nilmanifold above, then we could infer that $M$ must be diffeomorphic to a torus. Alternatively, we could follow
Yamaguchi's rather complicated surgery approach in \cite{Yam} to obtain the stronger result.
\end{remark}

\begin{lemma}\label{lem:group}
If $\Gamma \cong \bZ^m$ is a finite index subgroup of a torsionfree
group $\pi$ and $\b_1(\pi)=m$, then $\pi\cong \bZ^m$.
\end{lemma}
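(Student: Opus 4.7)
The plan is to exploit the finite-index hypothesis to reduce to a central extension, then use $b_1(\pi)=m$ to force abelianness, and finally invoke the structure theorem for finitely generated abelian groups.

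First I would pass to the \emph{normal core} $\Gamma_0 := \bigcap_{g\in\pi} g\Gamma g^{-1}$. Since $\Gamma$ has finite index in $\pi$, it has only finitely many conjugates, so $\Gamma_0$ is a finite-index normal subgroup of $\pi$. Being a finite-index subgroup of $\Gamma\cong\bZ^m$, it is itself free abelian of rank $m$. So replacing $\Gamma$ by $\Gamma_0$, I may assume $\Gamma\cong\bZ^m$ is normal in $\pi$, giving a short exact sequence
\[
1 \to \bZ^m \to \pi \to F \to 1
\]
with $F = \pi/\Gamma$ a finite group acting on $\bZ^m$ by conjugation.

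Next I would show this action is trivial, using $b_1(\pi)=m$. Since $F$ is finite, rational group (co)homology of $F$ in positive degrees vanishes, so the Lyndon--Hochschild--Serre spectral sequence degenerates and gives
\[
H_1(\pi;\bQ) \;\cong\; H_1(\bZ^m;\bQ)_F \;\cong\; (\bQ^m)^F,
\]
the space of $F$-invariants (which equals the coinvariants over $\bQ$ for a finite group). The hypothesis $b_1(\pi)=m$ forces $\dim (\bQ^m)^F = m$, hence $F$ acts trivially on $\bQ^m$, and consequently on the lattice $\bZ^m\subset\bQ^m$. Therefore $\bZ^m$ is \emph{central} in $\pi$.

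With $\bZ^m$ central, every commutator $[x,y]$ in $\pi$ depends only on the images of $x,y$ in $F$, so $[\pi,\pi]\subseteq \bZ^m$. Taking abelianizations yields the exact sequence
\[
0 \to \bZ^m/[\pi,\pi] \to \pi^{\mathrm{ab}} \to F \to 0.
\]
Since $F$ is finite, $\operatorname{rk}(\pi^{\mathrm{ab}}) = \operatorname{rk}(\bZ^m/[\pi,\pi]) = m - \operatorname{rk}([\pi,\pi])$. The assumption $b_1(\pi)=m$ therefore forces $[\pi,\pi]$ to have rank $0$; being a subgroup of the torsion-free group $\bZ^m$, it must be trivial. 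Hence $\pi$ is abelian. As $\pi$ is finitely generated (it contains $\bZ^m$ of finite index) and torsion-free by hypothesis, the structure theorem gives $\pi\cong\bZ^r$; comparing ranks via the finite-index inclusion $\bZ^m\hookrightarrow\bZ^r$ forces $r=m$, as desired.

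The main potential obstacle is the step identifying $H_1(\pi;\bQ)$ with $(\bQ^m)^F$, but this is a standard consequence of the vanishing of $H_*(F;\bQ\text{-module})$ in positive degrees for finite $F$, so the argument is essentially bookkeeping once the normalization $\Gamma\triangleleft\pi$ is in place.
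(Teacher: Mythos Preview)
Your overall strategy is sound, but there is a gap at the step ``so $[\pi,\pi]\subseteq \bZ^m$.'' Centrality of $\bZ^m$ tells you that each commutator $[x,y]$ depends only on the cosets $x\bZ^m,\,y\bZ^m$; it does \emph{not} tell you that $[x,y]$ lands in $\bZ^m$. The latter is equivalent to $F=\pi/\bZ^m$ being abelian, which is essentially what you are trying to prove. The fix is short: once $\bZ^m$ is central, $\pi/Z(\pi)$ is a quotient of $F$ and hence finite, so by Schur's theorem $[\pi,\pi]$ is finite and therefore trivial in the torsionfree group $\pi$. (Alternatively, skip the centrality step entirely: from the exact sequence $0\to \bZ^m/(\bZ^m\cap[\pi,\pi])\to\pi^{\mathrm{ab}}\to F^{\mathrm{ab}}\to 0$ and $\b_1(\pi)=m$ you get $\bZ^m\cap[\pi,\pi]=0$, so $[\pi,\pi]$ injects into the finite group $F$ and must be trivial.)

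Once patched, your argument is correct but takes a different route from the paper's. The paper avoids the normal core, the spectral sequence, and Schur's theorem altogether: it shows directly that the abelianization map $p\colon\pi\to\pi^{\mathrm{ab}}$ is injective. The transfer gives that $H_1(\Gamma;\bQ)\to H_1(\pi;\bQ)$ is surjective, hence an isomorphism since both have dimension $m$; then if $p(x)=0$, some power $x^r$ lies in $\Gamma\cong\bZ^m$, and chasing the commutative square $\Gamma\hookrightarrow\pi\to\pi^{\mathrm{ab}}$ against $\Gamma\cong\Gamma^{\mathrm{ab}}\hookrightarrow\pi^{\mathrm{ab}}\otimes\bQ$ forces $x^r=e$, hence $x=e$ by torsionfreeness. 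The paper's approach is more elementary and needs no normalization of $\Gamma$; yours, once repaired, is a bit heavier in machinery but makes the role of the finite quotient $F$ more explicit.
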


\begin{proof}
Note first that the transfer map for finite coverings implies that
$H_*(\Gamma;\bQ) \to H_*(\pi;\bQ)$ is surjective. In particular, we have
a surjection on rationalized abelianizations,
\[
\Gamma_{\rm ab} \otimes \QQ = H_1(\Gamma;\QQ) \to H_1(\pi;\QQ) =
\pi_{\rm ab} \otimes \QQ\,.
\]
But $\b_1(\Gamma)=\b_1(\pi)$, and a surjection of rational vector spaces
of the same dimension is an isomorphism, so $\QQ^m \cong \Gamma_{\rm
ab} \otimes \QQ \cong \pi_{\rm ab} \otimes \QQ$. We have the following
commutative diagram.
\[\xymatrix{
\Gamma\cong \Z^m \ar[r]^-i \ar[d]_-\cong & \pi \ar[d]^-p \\
\Gamma_{\rm ab}\cong \Z^m \ar[r]^-{i_{\rm ab}} \ar[d]_-{\otimes \QQ} &
\pi_{\rm ab} \ar[d]^-{\otimes \QQ} \\
\QQ^m \ar[r]^-\cong & \QQ^m
}
\]
Note that, because the bottom row is an isomorphism, $i_{\rm
ab}$ is an
injection. We claim that ${\rm Ker}(p)=0$, so $p$ is an
isomorphism (since it is a surjection by definition). Suppose $x \in \pi$ and $p(x)=0$.
Now, $\Gamma$ has finite index in $\pi$ and if $x^s\Gamma = x^t\Gamma$ (for $s > t$
say), then $x^{s-t} \in \Gamma$, so there exists some $r \in \bN$ such that
$x^r \in \Gamma$. But then we have the contradiction
\[
0 \not = i_{\rm ab}(x^r) = p(i(x^r)) = 0\,.
\]
Therefore, $x^r=e$, where $e$ is the identity of $\pi$. But $\pi$ is
torsionfree, so $r=0$ and $x=e$. Hence $p$ is injective and $p\colon\pi \to
\pi_{\rm ab}$ is an isomorphism. Therefore, $\pi$ is a finitely generated
torsionfree abelian group; hence $\pi\cong \Z^m$ (since $\b_1(\pi)=m$).
\end{proof}

Here is another Bochner-like result about the fundamental group that uses a weaker hypothesis than \corref{cor:bcat}.

\begin{corollary}\label{cor:e0bcat}
Let $M$ be a compact manifold with almost nonnegative sectional curvature and assume that $\b_1(M) + \e_0(\widetilde M)=\cat(M)$.
Then in the associated ANSC bundle $F \to \overline M \to N$, we have:
\begin{enumerate}
 \item $N$ is a torus of rank $\b_1(M)$ and
 \item $\pi_1(M)$ is free abelian.
\end{enumerate}
\end{corollary}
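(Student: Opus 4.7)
The plan is to apply \thmref{cat:theorem} in combination with the elementary inequalities $\b_1(N)\geq\b_1(M)$ and $\dim(N)\geq\b_1(N)$, exploiting that the hypothesis forces every inequality in the resulting chain to be an equality. The first inequality follows from the transfer for the finite cover $\overline M\to M$ combined with $\b_1(\overline M)=\b_1(N)$ (which holds because $F$ is simply connected); the second is noted in the discussion of \examref{exam:nilman}. Chaining these with \thmref{cat:theorem} yields
\[
\cat(M)\geq \e_0(\widetilde M)+\dim(N)\geq \e_0(\widetilde M)+\b_1(N)\geq \e_0(\widetilde M)+\b_1(M),
\]
and the hypothesis $\b_1(M)+\e_0(\widetilde M)=\cat(M)$ forces equality throughout.

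The equality $\dim(N)=\b_1(N)$ yields conclusion (1) at once: by \lemref{lem:rhtnil} (together with Mostow rigidity), $N$ is diffeomorphic to $T^{\b_1(M)}$. For (2), since $N\cong T^{\b_1(M)}$ and $F$ is simply connected, the long exact homotopy sequence of the bundle gives $\pi_1(\overline M)\cong \bZ^{\b_1(M)}$, and this sits inside $\pi=\pi_1(M)$ as a finite-index subgroup. Since $\b_1(\pi)=\b_1(M)$, an application of \lemref{lem:group} would then produce $\pi\cong\bZ^{\b_1(M)}$, which is exactly the free-abelian conclusion.

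The hard part will be verifying the torsion-free hypothesis of \lemref{lem:group} for $\pi$: in contrast with the proof of \corref{cor:bcat}, the present hypothesis does not force $\e_0(\widetilde M)=0$, so we cannot conclude that $\widetilde M$ is contractible and that $M=K(\pi,1)$ is aspherical. My plan is to first show, via the transfer applied to $H^1(\,\cdot\,;\bQ)$, that the finite quotient $\pi/\pi_1(\overline M)$ acts trivially on $\pi_1(\overline M)\otimes\bQ$, and hence trivially on $\pi_1(\overline M)\cong\bZ^{\b_1(M)}$ itself; after replacing $\pi_1(\overline M)$ by its normal core in $\pi$ if necessary, this realizes $\bZ^{\b_1(M)}$ as a finite-index central subgroup of $\pi$. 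Schur's theorem then makes $[\pi,\pi]$ finite, so the task reduces to ruling out torsion in $\pi$. This last step is the genuine crux and seems to require input beyond formal rational-homotopy manipulations, presumably either a refinement of the ANSC bundle structure from \cite{KPT} or a geometric input in the spirit of Yamaguchi's theorem referenced in \remref{rem:diffeoM}.
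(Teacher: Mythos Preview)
Your approach is essentially identical to the paper's: the same chain of inequalities (from \thmref{cat:theorem}, $\dim(N)\geq\b_1(N)$, and $\b_1(N)\geq\b_1(M)$), the same use of \lemref{lem:rhtnil} to obtain~(1), and the same invocation of \lemref{lem:group} for~(2). The paper's proof is in fact shorter than your write-up: after deriving $\b_1(M)=\b_1(N)=\dim(N)$ and concluding that $N$ is a torus, it simply writes ``by \lemref{lem:group}, we again infer that $\pi_1(M)$ is free abelian of the same rank'' and stops.

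In particular, the paper does \emph{not} verify the torsion-free hypothesis of \lemref{lem:group} for $\pi=\pi_1(M)$. Your concern about this point is legitimate: in the companion result \corref{cor:bcat} the torsion-freeness of $\pi$ follows because $M$ is shown there to be aspherical (a finite-dimensional $K(\pi,1)$), but that argument is unavailable here since $\e_0(\widetilde M)$ need not vanish. The paper leaves this unaddressed. Your proposed workaround via Schur's theorem and outside geometric input goes beyond what the paper does; indeed you correctly sense that nothing purely formal in the stated hypotheses rules out torsion in $\pi_1(M)$. So your argument and the paper's coincide exactly up to and including the gap you have identified --- you have simply been more scrupulous in flagging it.
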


\begin{proof}
Suppose $\b_1(M) + \e_0(\widetilde M)=\cat(M)$. From the string of inequalities
\[
\cat(M)\geq \e_0(\widetilde{M})+\dim(N)\geq \e_0(\widetilde{M})+\b_1(N) \geq \e_0(\widetilde{M})+\b_1(M)
\]
we again see that $\b_1(M)=\b_1(N)=\dim(N)$. Even though $\widetilde M$ may not be contractible, we can still conclude from
\lemref{lem:rhtnil} that $N$ is a torus with free abelian fundamental group of rank $\b_1(N)=\b_1(M)$. But then, by \lemref{lem:group},
we again infer that $\pi_1(M)$ is free abelian of the same rank.
\end{proof}

The results above can also be useful in computing the LS category of ANSC-manifolds. Consider the following example.
\begin{example}\label{exam:e0cat}
Take the Hopf bundle $S^3 \to S^7 \to S^4$ and mod out by the compatible principal $S^1$-actions on fibre and total space to
obtain a bundle $S^2 \to \CP^3 \to S^4$ with structure group $\mathrm{SO}(3)$ which preserves the round metric on $S^2$. The relative model for this bundle is given by
\[
(\Lambda(w_4,w_7),d_W) \to (\Lambda(w_4,w_7) \otimes \Lambda(v_2, v_3),D) \to (\Lambda(v_2,v_3),d_V)
\]
with non-zero differentials
\[
d_W(w_7)=w_4^2, \ d_V(v_3)=v_2^2, \ D(w_7)=w_4^2, \ D(v_3)=v_2^2 - w_4
\]
where the linear term $w_4$ in the last differential expresses the fact that the connecting homomorphism in the exact homotopy
sequence is non-trivial for $\pi_4(S^4) \to \pi_3(S^2)$. Let $KT$ denote the Kodaira-Thurston nilmanifold with minimal model
$(\Lambda(u_1,u_2,u_3,u_4),d)$ with $d(u_1)=d(u_2)=d(u_4)=0$ and $d(u_3)=u_1u_2$. (We choose this ordering to display
the fact that the Kodaira-Thurston manifold is a product of the $3$-dimensional Heisenberg nilmanifold and a circle.) Since $KT$ is $4$-dimensional,
there is a degree one map $\phi\colon KT \to S^4$ which on models is given by $\Phi\colon (\Lambda(w_4,w_7),d_W) \to (\Lambda(u_1,u_2,u_3,u_4),d)$;
$w_4 \mapsto u_1u_2u_3u_4$, $w_7 \mapsto 0$. Now pull back the bundle over $KT \to S^4$ to obtain a bundle
$S^2 \to X \to KT$ with model
\[
(\Lambda(u_1,u_2,u_3,u_4),d) \to (\Lambda(u_1,u_2,u_3,u_4) \otimes \Lambda(v_2,v_3),\tilde D) \to (\Lambda(v_2,v_3),d_V)
\]
with $\tilde D|_{u_i}=d$, $\tilde D(v_2)=0$ and $\tilde D(v_3)=v_2^2 - u_1u_2u_3u_4$. Note that by \cite{FY}, $X$ is ANSC.
The form of the model is determined by the pullback;
namely, the differential $\tilde D$ is given by taking $D$ and replacing all instances of generators $w_i$ by their images under $\Phi$.
It is then easy to see that the $6$-dimensional top class of $H^*(X;\Q)$ is represented by either $v_2^3$ or $v_2u_1u_2u_3u_4$ with
$\tilde D(v_2v_3)=v_2^3 - v_2u_1u_2u_3u_4$ identifying the classes in cohomology. These expressions then say that $\e_0(X)=5$.
Clearly we have $\e_0(KT)=4$ and $\e_0(S^2)=1$. so
\[
6=\dim(X) \geq \cat(X) \geq \e_0(X) = \e_0(KT) + \e_0(S^2) = 5\,.
\]
If $\cat(X)=\dim(X)$, then an old theorem of Berstein (see for instance \cite[Proposition 2.51]{CLOT}) says that, for some $\pi_1(X)$-module $\mathcal{A}$, there is an element $\alpha \in H^1(\pi_1(X);\mathcal{A}) \cong H^1(\pi_1(KT);\mathcal{A})$ with non-zero cup product $\alpha^6 \in H^6(\pi_1(KT);\otimes^6\mathcal{A})$. But this contradicts the fact that $\dim(KT)=4=\cd(\pi_1(KT))$ (where $\cd$ denotes cohomological dimension). Hence, we learn that $\cat(X)=5$.
\end{example}

\subsection{Non-negative Ricci curvature}\label{subsec:ricci}
In \cite{Op}, Bochner's estimate $\b_1(M) \leq \dim(M)$ for a non-negatively Ricci curved manifold $M$ was refined, using the following consequence of the Cheeger-Gromoll Splitting Theorem \cite{ChG}, to $\b_1(M)\leq \cat(M)$ .

\begin{theorem}[Cheeger-Gromoll Splitting]\label{cgthm}
If $M$ is a compact manifold with non-negative Ricci curvature,
then there is a finite cover $\overline M$ of $M$ with a
diffeomorphic splitting $\overline M \cong T^r \times F$. Further,
$F$ is simply connected and $T^r$ is flat.
\end{theorem}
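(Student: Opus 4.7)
The plan is to derive this finite-cover splitting as a consequence of the full Cheeger-Gromoll Splitting Theorem, which asserts that a complete Riemannian manifold with non-negative Ricci curvature containing a line splits isometrically as a Riemannian product with an $\R$-factor.

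First I would pass to the universal cover $\widetilde M$, which inherits a complete metric of non-negative Ricci curvature from $M$. I would then apply the Splitting Theorem iteratively: as long as $\widetilde M$ contains a line, split off a factor of $\R$. After at most $\dim(M)$ iterations the process terminates, yielding an isometric product decomposition $\widetilde M \cong \R^r \times F$ in which $F$ is a complete simply connected manifold of non-negative Ricci curvature containing no line. Note that $F$ is automatically simply connected since it is a factor of the simply connected $\widetilde M$, and $\R^r$ is flat by construction.

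Next I would use the uniqueness of the Euclidean de Rham factor to see that the full isometry group splits as $\mathrm{Isom}(\widetilde M) \cong \mathrm{Isom}(\R^r) \times \mathrm{Isom}(F)$, so that each deck transformation $\gamma \in \pi_1(M)$ decomposes uniquely as $\gamma = (\gamma_1,\gamma_2)$. Since $\pi_1(M)$ acts properly discontinuously and cocompactly on $\widetilde M$ by isometries, I would argue that $F$ must itself be compact (otherwise the quotient $\pi_1(M)\backslash \widetilde M$ could not be compact, because $F$ has no remaining Euclidean direction in which to be crushed). With $F$ compact, $\mathrm{Isom}(F)$ is a compact Lie group, so the projection $p_2 \colon \pi_1(M) \to \mathrm{Isom}(F)$ has finite image, and its kernel $K$ has finite index in $\pi_1(M)$.

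Finally, $K$ acts trivially on $F$ and cocompactly on $\R^r$ by a crystallographic group. By the classical Bieberbach theorem, $K$ contains a further finite-index subgroup $\Gamma \cong \bZ^r$ acting on $\R^r$ purely by translations. Taking the associated cover produces $\overline M = \Gamma\backslash\widetilde M \cong (\bZ^r\backslash\R^r) \times F = T^r \times F$, a finite cover of $M$ of the required form. The main obstacle I anticipate is the third step: rigorously justifying that $F$ must be compact and that the image of $\pi_1(M)$ in $\mathrm{Isom}(F)$ is finite. This relies essentially on the uniqueness of the de Rham decomposition, together with a careful combination of proper discontinuity and cocompactness of the $\pi_1(M)$-action to preclude any non-compact non-Euclidean factor surviving in $F$.
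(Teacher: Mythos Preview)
The paper does not give a proof of this statement; it is quoted as a known consequence of the Cheeger--Gromoll splitting theorem and simply cited to \cite{ChG}, so there is no argument in the paper to compare yours against.

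Your outline is the standard one and is correct through the isometric splitting $\widetilde M\cong\R^r\times F$ with $F$ compact and simply connected, and through the observation that deck transformations decompose as $(\gamma_1,\gamma_2)$. The genuine error is the inference ``$\mathrm{Isom}(F)$ is a compact Lie group, so $p_2\colon\pi_1(M)\to\mathrm{Isom}(F)$ has finite image.'' A homomorphism from a discrete group into a compact Lie group need not have finite image. Concretely, let $\widetilde M=\R\times S^2$ with the product metric and let $\Z$ act by $(t,x)\mapsto(t+1,R_\alpha x)$ with $\alpha/2\pi$ irrational. The quotient $M$ is closed with $\mathrm{Ric}\geq 0$, yet $p_2(\Z)=\langle R_\alpha\rangle$ is infinite, and \emph{no} finite-index subgroup of $\pi_1(M)$ acts trivially on $S^2$. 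So your scheme of first killing the $F$-action and then applying Bieberbach cannot work in general; you correctly flagged this step as the obstacle, but it is actually false, not merely delicate.

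The fix is to reverse the order. Project first to $\mathrm{Isom}(\R^r)$: the kernel of $p_1$ acts freely and properly discontinuously on the compact $F$, hence is finite, and (using compactness of $F$) the image is a cocompact discrete subgroup of $\mathrm{Isom}(\R^r)$. Bieberbach plus an elementary argument then produces a finite-index subgroup $\Gamma\cong\Z^r$ of $\pi_1(M)$ acting by translations on the $\R^r$-factor. The quotient $\overline M=\Gamma\backslash(\R^r\times F)$ is a flat $F$-bundle over $T^r$ with commuting holonomies $\phi_1,\dots,\phi_r\in\mathrm{Isom}(F)$. The closure of $\langle\phi_1,\dots,\phi_r\rangle$ is a compact \emph{abelian} Lie group, so after one more finite cover the $\phi_i$ lie in a common torus $T\subset\mathrm{Isom}(F)$; choosing $\xi_i\in\mathrm{Lie}(T)$ with $\exp\xi_i=\phi_i$, the map $(t,x)\mapsto\big(t,\exp(-\textstyle\sum_i t_i\xi_i)\cdot x\big)$ descends to a diffeomorphism $\overline M\cong T^r\times F$. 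In the irrational-rotation example this is exactly the familiar fact that the mapping torus of $R_\alpha$ is diffeomorphic, though not isometric, to $S^1\times S^2$.
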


In fact, while the new estimate was as stated above to mimic
Bochner, the proof showed that
\[
\b_1(M) +\cupl_{\Q}(F) \leq \cat(M)\,.
\]
Now we see however that the product splitting is an example of
an ANSC-type bundle -- namely, the trivial one,
$F \to \overline M \to T^r$. Therefore, the estimate of
\thmref{cat:theorem} holds in this situation as well.
Moreover, for rational coefficients, we have the general fact that
$\cupl_{\Q}(F) \leq \e_0(F)$, so we have the
following refinement.

\begin{theorem}\label{thm:ricci}
Suppose $M$ is a compact manifold with non-negative Ricci
curvature. Then
\[
\cat(M) \geq \b_1(M) + \e_0(\widetilde M)
\]
where $\b_1(M)$ is the first Betti number of $M$. Moreover, if $\pi_1(M)$ is torsionfree, then $\b_1(M)$ may be
replaced by $\cd(\pi_1(M))$.
\end{theorem}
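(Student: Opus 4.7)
The plan is to leverage the Cheeger-Gromoll Splitting Theorem (\thmref{cgthm}), which already puts a non-negatively Ricci curved manifold $M$ in the shape of an ANSC-type bundle, and then to feed this into \propref{prop:cat}. By Cheeger-Gromoll, there is a finite cover $\overline M \cong T^r \times F$ with $F$ simply connected and $T^r$ a flat $r$-torus. This product is tautologically the total space of the trivial bundle $F \to \overline M \to T^r$, whose base is a nilmanifold, whose fibre is a simply connected closed manifold (hence obeys rational Poincar\'e duality), and on which $\pi_1(T^r)$ acts trivially; so the bundle is quasi-nilpotent and \propref{prop:cat} applies to yield
\[
\e_0(\overline M) \geq \e_0(F) + r\,.
\]

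Next, I would translate this into a lower bound for $\cat(M)$ via the monotonicity properties of $\cat$ listed in \subsecref{subsec:bochner}: we have $\cat(M) \geq \cat(\overline M) \geq \e_0(\overline M)$, hence $\cat(M) \geq \e_0(F) + r$. To identify the right-hand side with $\b_1(M) + \e_0(\widetilde M)$, two observations suffice. First, since $F$ is simply connected, the universal cover of $\overline M = T^r \times F$ is $\bR^r \times F$, which is homotopy equivalent to $F$; this is also the universal cover of $M$, so $\widetilde M \simeq F$ and $\e_0(\widetilde M) = \e_0(F)$. Second, by the transfer argument recalled in the proof of \lemref{lem:group}, the map $H_1(\overline M;\bQ) \to H_1(M;\bQ)$ is surjective, so $r = \b_1(\overline M) \geq \b_1(M)$. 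Putting these together gives $\cat(M) \geq \b_1(M) + \e_0(\widetilde M)$.

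For the \emph{moreover} clause, I would invoke exactly the same theorem of Serre (\cite[Chapter VIII, Theorem 3.1]{Brown}) used after \thmref{cat:theorem}: when $\pi_1(M)$ is torsionfree and contains the finite-index subgroup $\pi_1(\overline M) \cong \bZ^r$ of finite cohomological dimension $r$, we obtain $\cd(\pi_1(M)) = \cd(\bZ^r) = r$. Substituting $r = \cd(\pi_1(M))$ in the inequality $\cat(M) \geq r + \e_0(\widetilde M)$ produces the refined statement. I do not anticipate a serious obstacle: the entire argument is a direct assembly of tools already developed in the paper, and the only step requiring any verification is that the trivial product bundle supplied by Cheeger-Gromoll satisfies the hypotheses of \propref{prop:cat}, which it manifestly does.
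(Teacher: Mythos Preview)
Your proposal is correct and follows essentially the same route as the paper: apply Cheeger--Gromoll to obtain the trivial bundle $F \to \overline M \cong T^r \times F \to T^r$, feed this into \propref{prop:cat} (the paper phrases it as ``the estimate of \thmref{cat:theorem} holds in this situation as well''), and then use $\cat(M) \geq \cat(\overline M) \geq \e_0(\overline M)$ together with $r = \b_1(\overline M) \geq \b_1(M)$ and $F \simeq \widetilde M$. The ``moreover'' clause via Serre's theorem is likewise exactly the argument used after \thmref{cat:theorem}.
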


Cohomogeneity one manifolds are a main source of
ANSC-manifolds (\cite{Tu}). These are the compact manifolds
equipped with
a compact Lie group action where the principal orbits are all of
codimension one. It is also true that every
cohomogeneity one manifold has a metric of non-negative Ricci
curvature, so in this case we see the connection.
However, we are unaware of a general result relating ANSC to
non-negative Ricci curvature. We also mention that in \cite{Cai}, a
Cheeger-Gromoll-type splitting is proven for compact manifolds with almost non-negative
Ricci curvature \emph{which also satisfy a certain lower
bound on injectivity radius}, so the estimate above holds in that
case as well. This should also be compared with the
result of Colding \cite{Col} that a compact manifold $M^n$ of
almost non-negative Ricci curvature with $\b_1(M)=n$ is homeomorphic to a torus $T^n$.


\section{The Third Reduction: Constraints on the Action and Fibre}\label{sec:fibre}
In \cite{Tu}, W. Tuschmann lists several interesting conjectures about ANSC-manifolds. Here we will see
that rational models have something to say in this regard. We first consider the following.

\begin{conjecture}[Conjecture 6.5 \cite{Tu}]\label{conj:action}
If $M$ is ANSC, then there exists a finite index subgroup of $\pi_1(M)$ that acts trivially on $\pi_2(M)$.
\end{conjecture}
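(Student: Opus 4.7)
The plan is to combine \thmref{thm:replace} with the relative minimal model machinery of \examref{exam:nilrel} to reduce the conjecture to the vanishing of a certain nilpotent Lie algebra representation on $H^2(F;\bQ)$. First I would pass to the finite nilpotent cover $\overline M$ of $M$ with its ANSC-type bundle $F\to\overline M\to N$; since $F\simeq\widetilde M$, we have $\pi_2(M)=\pi_2(\widetilde M)=\pi_2(F)$, and since $\pi:=\pi_1(\overline M)$ has finite index in $\pi_1(M)$, it suffices to find a finite index subgroup of $\pi$ acting trivially on $\pi_2(F)$.

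Next I would dispatch the torsion. Because $F$ is a closed simply connected manifold, $\pi_2(F)$ is a finitely generated abelian group $\bZ^r\oplus T$ with $T$ finite, and the action of $\pi$ on $T$ factors through the finite group $\mathrm{Aut}(T)$; its kernel is a finite index subgroup of $\pi$ on which only the action on $\bZ^r$ remains to be trivialized. Because $\overline M$ is nilpotent, $\pi$ acts nilpotently on $\bZ^r$, hence unipotently on $\pi_2(F)\otimes\bQ\cong\bQ^r$. A finitely generated group of unipotent matrices in $GL_r(\bZ)$ is torsion-free, so the image of $\pi$ in $GL_r(\bZ)$ admits a finite-index trivial subgroup if and only if this image is itself trivial. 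The whole conjecture is therefore equivalent to showing that $\pi$ acts trivially on $\pi_2(F)\otimes\bQ$.

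This rational statement is what I would attack with the relative minimal model of \examref{exam:nilrel}. The infinitesimal action of the rational Lie algebra $\fg$ of $N$ on $\pi_2(F)\otimes\bQ\cong (W^2)^*$ is encoded by the degree-zero derivations $\theta_i$ restricted to $W^2$, so the target becomes $\theta_i|_{W^2}=0$. Since $F$ is simply connected, $W^1=0$ and $d_W|_{W^2}=0$, so for $w\in W^2$ the differential takes the form $D(w)=\sum_i v_i\theta_i(w)+\alpha(w)$ with $\alpha(w)\in\Lambda^3 V$. Expanding $D^2(w)=0$ and collecting by $V$-degree yields two identities: in $\Lambda^2V\otimes W^2$ one recovers the Lie algebra representation identity $[\theta_i,\theta_j]|_{W^2}=\sum_k c_{ij}^k\theta_k|_{W^2}$, and in $\Lambda^4V$ one finds $d_V\alpha(w)=\sum_i v_i\,\alpha(\theta_i(w))$. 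Thus $W^2\cong H^2(F;\bQ)$ carries a nilpotent representation of the nilpotent Lie algebra $\fg$ that I need to force to be trivial.

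The main obstacle is the step from \emph{nilpotent} to \emph{trivial}, since Engel's theorem only triangularizes such a representation. To close the gap I would bring in the ANSC hypothesis geometrically. The most natural route is to use that the bundle $F\to\overline M\to N$ comes from a Gromov--Hausdorff collapsing sequence in which the structure group can be chosen inside the isometry group of an equivariant limit metric on $F$ of generalized non-negative curvature; such an isometry group is compact, and so acts semisimply on $H^2(F;\bQ)$, while a unipotent semisimple representation must be trivial. An alternative, more algebraic route would be to show that any non-zero $\theta_i|_{W^2}$ would force non-trivial Massey-type secondary operations in $H^*(\overline M;\bQ)$ that are inconsistent with the sharp cohomological inequality of \propref{prop:cat}. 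Making either of these geometric or algebraic inputs precise enough to plug into the model above is where I expect the real difficulty to lie.
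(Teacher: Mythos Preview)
The statement you are addressing is recorded in the paper as a \emph{conjecture} (Conjecture~6.5 of Tuschmann), and the paper does \emph{not} prove it in general. So there is no ``paper's own proof'' to compare against. Your proposal is honestly labelled as a plan, and you yourself identify the genuine gap: the step from a nilpotent (unipotent) action on $\pi_2(F)\otimes\bQ$ to a trivial one.

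Your reduction up to that point is sound. The passage to the finite cover $\overline M$, the handling of the torsion in $\pi_2(F)\cong H_2(F;\bZ)$ via the finite image in $\mathrm{Aut}(T)$, and the observation that a unipotent subgroup of $GL_r(\bZ)$ is torsion-free (so that ``finite image'' forces ``trivial image'') are all correct, and your translation into the vanishing of the $\theta_i|_{W^2}$ in the relative model is exactly how the paper encodes the action. But neither of your two proposed completions is justified. The claim that the structure group of the ANSC bundle can be arranged to lie inside a compact isometry group of $F$ is precisely what is not known; if it were, the conjecture would follow immediately for the reason you give. The alternative route via \propref{prop:cat} cannot work either: that inequality holds for \emph{every} quasi-nilpotent fibration with Poincar\'e-duality fibre over a nilmanifold, irrespective of whether the $\theta_i|_{W^2}$ vanish, so it cannot by itself exclude a nontrivial unipotent action in degree~$2$.

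What the paper actually does instead is establish the conclusion only under strong extra hypotheses on the fibre: \thmref{thm:F0action} (via \thmref{thm:zero derivs = trivial}) shows the action is rationally trivial when $F$ is an $F_0$-space whose rational cohomology admits no nontrivial degree-zero nilpotent derivations, e.g.\ $F=\CP^n$ or $S^{2m}\times S^{2n}$ by \propref{prop: degree-zero Halperin}. That argument is purely algebraic---successive changes of basis in the model over $S^1$, then reduction of a general nilmanifold base to $S^1$ by pullback---and does not use any geometric compactness of a structure group. The general conjecture remains open in the paper.
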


The action of $\pi_1(M)$ on any $\pi_q(M)$ is detected by Whitehead products. Explicitly, if $\alpha\cdot\beta$
denotes the action of $\alpha\in \pi_1(M)$ on $\beta\in \pi_q(M)$, then
\[
 [\alpha,\beta] = \alpha\cdot\beta - \beta
\]
where $[\alpha,\beta]$ is the Whitehead product of $\alpha$ and $\beta$. The important thing for us is that
Whitehead products are detected in rational homotopy by the quadratic parts of the differential. Hence, at
least rationally, we can see the action in the differential of the model.

\subsection{Fibrations without action}\label{subsec:fibnoact}
Consider a fibration of ANSC-type $F \stackrel{i}{ \to} \overline M \stackrel{p}{\to} N=K(\pi,1)$.
Let's make several observations about the action of
$\pi_1(N)=\pi_1(\overline M)=\pi$ on $\pi_j(\overline M)=\pi_j(F)$ for $j \geq 2$. We know of course that
$F$ is simply connected, but its connectivity may be higher. Assume $F$ is $(k-1)$-connected. Therefore,
$\pi_j(\overline M)=0$ for $1< j \leq k-1$ and we have an exact sequence
\[
H_k(F) \cong \pi_k(F) \cong \pi_k(\overline M) \stackrel{h}{\to} H_k(\overline M) \to H_k(\pi) \to 0
\]
where $h$ is the Hurewicz homomorphism. For the last part, see \cite[Theorem 7.9]{Whi}. Also, if
$\alpha\in \pi$ and $\xi\in\pi_k(\overline M)$, then $h(\xi-\alpha\cdot\xi)=0$ since $\xi$ and
$\alpha\cdot\xi$ are freely homotopic. Thus we have the simple observation

\begin{lemma}\label{lem:hur}
If $h$ is injective, then the action of $\pi$ on $H_k(F)$ is trivial.
\end{lemma}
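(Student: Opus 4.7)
The plan is to trace the action through the chain of isomorphisms that has already been set up in the excerpt and then exploit the hypothesis on $h$.

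First I would unpack the identifications. Since $F$ is $(k-1)$-connected, the classical Hurewicz theorem gives $\pi_k(F) \cong H_k(F)$ naturally. Since $N = K(\pi,1)$ has $\pi_j(N) = 0$ for $j \geq 2$, the long exact homotopy sequence of $F \to \overline M \to N$ gives $\pi_k(F) \cong \pi_k(\overline M)$ via the fiber inclusion $i_*$. All of these isomorphisms are $\pi$-equivariant: for the first, the Hurewicz map commutes with any self-map of $F$, in particular with the monodromy; for the second, the standard $\pi_1(\overline M)$-action on $\pi_k(\overline M)$ restricts via $i_*$ to the fiber-transport action on $\pi_k(F)$, and since $\pi_1(\overline M) \to \pi_1(N) = \pi$ is the identification we are using, the two actions agree. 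Consequently, the $\pi$-action on $H_k(F)$ coincides, under the composite iso $H_k(F) \cong \pi_k(F) \cong \pi_k(\overline M)$, with the standard $\pi_1(\overline M)$-action on $\pi_k(\overline M)$.

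Next I would apply the free-homotopy observation recalled just above the lemma. For any $\alpha \in \pi$ and $\xi \in \pi_k(\overline M)$, the classes $\xi$ and $\alpha \cdot \xi$ are represented by freely homotopic maps $S^k \to \overline M$, hence have the same image under the Hurewicz homomorphism $h \colon \pi_k(\overline M) \to H_k(\overline M)$. Therefore $h(\xi - \alpha \cdot \xi) = 0$.

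Finally, using the hypothesis that $h$ is injective, I conclude $\xi - \alpha \cdot \xi = 0$, that is, $\alpha \cdot \xi = \xi$ for all $\alpha \in \pi$ and all $\xi \in \pi_k(\overline M)$. Transporting this back along the $\pi$-equivariant isomorphism $\pi_k(\overline M) \cong H_k(F)$ shows that $\pi$ acts trivially on $H_k(F)$.

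The argument is essentially bookkeeping — there is no real obstacle. The only point that deserves a sentence of justification is the equivariance of $i_* \colon \pi_k(F) \xrightarrow{\cong} \pi_k(\overline M)$ with respect to the two relevant $\pi$-actions (fiber monodromy vs.\ the standard $\pi_1$-action on higher homotopy); this is a standard fact about fibrations with aspherical base but is worth a brief mention so that the chain of identifications is transparent.
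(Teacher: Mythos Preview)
Your proof is correct and follows exactly the paper's approach. The paper is even terser: it records the key observation $h(\xi-\alpha\cdot\xi)=0$ in the text immediately preceding the lemma and then states the lemma as a ``simple observation'' with no separate proof; you have simply spelled out the equivariance bookkeeping that the paper leaves implicit.
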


Of course, if $h\colon H_k(F) \to H_k(\overline M)$ is injective, then $\hat h\colon H^k(\overline M;\bQ) \to H^k(F;\bQ)$
is surjective. The following result relates the two consequences above directly.

\begin{theorem}\label{thm:ansc action}
Suppose that $F \stackrel{i}{ \to} \overline M \stackrel{p}{\to} N$ is an ANSC-type fibration with $(p^*)^{k+1} \colon H^{k+1}(N;\Q)
\to H^{k+1}(\overline M;\Q)$ injective and $\pi_j(F)=0$ for $j\leq k-1$. If the action
of $\pi_1(N)$ on $\pi_k(F)\otimes \Q = H^k(F;\bQ)$ is trivial, then $i^*\colon H^k(\overline M;\bQ) \to H^k(F;\bQ)$ is surjective.
Moreover, if $H^*(F;\bQ)$ is generated in degree $k$, then the fibration $F \stackrel{i}{ \to} \overline M \stackrel{p}{\to} N$ is totally
cohomologous to zero (TNCZ) and
\[
H^*(\overline M;\bQ) \cong H^*(N;\bQ) \otimes H^*(F;\bQ)
\]
as vector spaces.
\end{theorem}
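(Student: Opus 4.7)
The plan is to analyze the relative Sullivan model of the ANSC-type fibration and exploit the $(k-1)$-connectedness of $F$ to reduce the differential on degree-$k$ fibre generators to a very rigid form. By \thmref{thm:fibmod} and \examref{exam:nilrel}, the fibration has relative model $(\Lambda V, d_V) \to (\Lambda V \otimes \Lambda W, D) \to (\Lambda W, d_W)$ in which $(\Lambda V, d_V)$ is the Chevalley--Eilenberg minimal model of $N$ (an exterior algebra on generators $v_1, \ldots, v_n$ of degree one) and $(\Lambda W, d_W)$ is the minimal model of $F$, with $W^j = 0$ for $j < k$. Since $F$ is simply connected, this relative model is itself the minimal model of $\overline{M}$; for $w \in W^k$ we may write $D(w) = d_W(w) + \sum_{i} v_i\,\theta_i(w) + \chi_2$ exactly as in \examref{exam:nilrel}.

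First I would carry out the degree count. Since $W$ is concentrated in degrees $\geq k \geq 2$, one checks that $(\Lambda W)^j = 0$ for $0 < j < k$ and that $(\Lambda W)^j = W^j$ for $j \in \{k, k+1\}$. This collapses the decomposition of $D(w)$: necessarily $d_W(w) = 0$, each $\theta_i(w)$ lies in $W^k$, and the remaining piece $\chi_2$ lies purely in $\Lambda^{k+1}V$, that is, in the base model. Next I would invoke the dictionary between the relative model and the action on fibre homotopy: the restrictions $\theta_i|_{W^k}$ encode the infinitesimal action of $\fg = \pi_1(N)\otimes \Q$ on $\pi_k(F)\otimes \Q$ (this is the Whitehead-product/quadratic-differential correspondence flagged at the start of \secref{sec:fibre}). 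The triviality hypothesis therefore forces $\theta_i(w) = 0$ on $W^k$, leaving $D(w) = \chi_2 \in \Lambda^{k+1}V$.

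At this stage the injectivity of $(p^*)^{k+1}$ enters. Because $D \circ D = 0$ and $D|_{\Lambda V} = d_V$, the element $\chi_2$ is a $d_V$-cocycle, defining a class $[\chi_2] \in H^{k+1}(N;\Q)$. Its image $p^*[\chi_2] = [D(w)]$ is zero in $H^{k+1}(\overline{M};\Q)$, so the hypothesis gives $\chi_2 = d_V(\eta)$ for some $\eta \in (\Lambda V)^k$. Then $w - \eta$ is $D$-closed, and the quotient map $\rho\colon \Lambda V \otimes \Lambda W \to \Lambda W$ sends $w - \eta$ to $w$, so $i^*[w-\eta] = [w]$ in $H^k(F;\Q)$. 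A short check using minimality and connectivity gives $H^k(F;\Q) = W^k$, so the degree-$k$ component of $i^*$ is surjective.

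For the concluding statement, if $H^*(F;\Q)$ is generated as an algebra in degree $k$, then the ring map $i^*$ is automatically surjective in every degree, which is the TNCZ condition; and triviality of the action on a set of algebra generators propagates to triviality on all of $H^*(F;\Q)$, so the Leray--Hirsch theorem yields the additive isomorphism $H^*(\overline{M};\Q) \cong H^*(N;\Q) \otimes H^*(F;\Q)$. I expect the main technical obstacle to be the identification of $\theta_i|_{W^k}$ with the infinitesimal $\pi_1(N)$-action: because $N$ is only nilpotent rather than simply connected, one has to pass through the Mal'cev completion of $\pi_1(N)$ to match the derivations on $W$ with the $\fg$-action on $\pi_k(F)\otimes \Q$. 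This is, however, precisely the viewpoint on Whitehead products emphasized at the beginning of \secref{sec:fibre}, and should be invoked without further elaboration.
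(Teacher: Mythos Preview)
Your argument is correct and follows essentially the same route as the paper's proof: set up the relative model, use $(k-1)$-connectivity of $F$ together with triviality of the action to force $D(w)\in\Lambda^{k+1}V$, then kill this base cocycle via the injectivity of $(p^*)^{k+1}$ and replace $w$ by the $D$-cocycle $w-\eta$. Your version is somewhat more explicit about the degree count and about the identification of the $\theta_i|_{W^k}$ with the action (which the paper compresses into the parenthetical remark that ``triviality of the action is displayed by the differential having no quadratic terms''), but the structure of the argument is the same.
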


\begin{proof}
Let a relative model for $F \to \overline M \to N$ be given by
$$(\Lambda V,d) \to (\Lambda V \otimes \Lambda W, D) \to (\Lambda W,\bar d).$$
Let $w \in W^k \cong \pi_k(F)\otimes \Q=H^k(F;\bQ)$ be a $\bar d$-cocycle. Then, because $\pi_1(N)$
acts trivially on $H^k(F;\bQ)$, we have
\[
Dw = \sum_I c_{I}v_I \eqqcolon \tau
\]
where $|I|=k+1$ and the $v_I$ are products of $k+1$ generators of $V^1$. (Note that the triviality of the action
is displayed by the differential having no quadratic terms.)
The facts that $Dx=dx$ for any $x\in \Lambda V$
and $D^2=0$ then imply that $\tau$ is a $d$-cocycle. If $\tau$ were not a coboundary,
then this would contradict the injectivity of $H^{k+1}(N;\bQ)$ in $H^{k+1}(\overline M;\bQ)$,
so we must have $d\sigma=\tau$ for some $\sigma \in (\Lambda V)^k$. But then the
element $w-\sigma$ is a $D$-cocycle that maps to $w \in \Lambda W$. Since, $w$ was
an arbitrary element of $H^k(F;\bQ)$, we see that $i^*$ is surjective.
\end{proof}

\begin{remark}
We have stated the result above for ANSC-type fibrations because that is our focus.
An examination of either proof, however, shows that the base being a nilmanifold
plays no role. Therefore, $N$ may be replaced above by any base space $B$ such that
$\pi_1(B)$ acts trivially on $H^*(F;\bQ)$ and $p^*$ is injective. If, for instance, $\chi(F) \not = 0$, then the
transfer of \cite{CG} shows that $p^*$ \emph{is} injective, so this is one case where the injectivity hypothesis holds.
\end{remark}

\begin{example}\label{exam:GT}
The flag manifold $G/T$, where $T$ is the maximal torus of the compact Lie group $G$,
has non-negative sectional curvature and $H^*(G/T;\bQ)$ generated in degree $2$.
Therefore, if $G/T$ arises as the fibre $F$ in some ANSC-type fibration
$F \to \overline M \to N$, then a trivial action of $\pi_1(N)$ on $H^2(G/T;\bQ)$
is detected in cohomology by the fibration being TNCZ.
\end{example}

\subsection{Fibrations with action}\label{subsec:fibwithact}
A mix of classical and rational homotopy theory can sometimes identify non-trivial actions. The
following result is an example of this in the context of ANSC manifolds.

\begin{proposition}[\!\! Compare {\cite[Proposition 4.100]{FOT}}]\label{prop:nontrivaction}
Let $F \stackrel{i}{ \to} \overline M \stackrel{p}{\to} N$ be an ANSC-type fibration with $\pi_j(F)=0$ for $j\leq k-1$.
Suppose that non-zero $a\in H^1(N;\bQ)$ and $\om\in H^k(F;\bQ)$ obey $p^*a \cup \tilde\om = 0$ where
$i^*(\tilde\om)=\om$. Then the action of $\pi_1(N)$ on $\pi_k(F)=H_k(F)$ is nontrivial.
\end{proposition}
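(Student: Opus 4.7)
The plan is to argue the contrapositive inside the relative minimal model from \examref{exam:nilrel}: assume the action of $\pi_1(N)$ on $\pi_k(F) \otimes \bQ$ is trivial, and show that $p^*a \cup \tilde\omega \neq 0$ in $H^{k+1}(\overline M;\bQ)$ for every non-zero $a \in H^1(N;\bQ)$ and every lift $\tilde\omega$ of $\omega$. Write the model as $(\Lambda V, d_V) \to (\Lambda V \otimes \Lambda W, D) \to (\Lambda W, d_W)$, with $V = V^1$ spanned by $\{v_1,\ldots,v_n\}$ dual to a basis of $H^1(N;\bQ)$, and $D$ given by \eqref{differential_D}.

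The first step is bookkeeping. Because $F$ is $(k-1)$-connected, the minimal model $(\Lambda W, d_W)$ has $W^{<k} = 0$, so $(\Lambda W)^k = W^k \cong H^k(F;\bQ)$ and, by degree, $(\Lambda V \otimes \Lambda W)^k = W^k \oplus \Lambda^k V$ with no mixed contributions. Likewise, for any $w \in W^k$ the differential collapses to $Dw = \sum_i v_i\,\theta_i(w) + \kappa_w$ with $\theta_i(w) \in W^k$ and $\kappa_w \in \Lambda^{k+1} V$, since all would-be ``middle'' terms in $\Lambda^{\geq 2}V \otimes (\Lambda W)^{<k}$ are killed by the connectivity of $F$. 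I will identify the maps $\theta_i|_{W^k} \colon W^k \to W^k$ with the infinitesimal action of the Malcev Lie algebra of $\pi_1(N)$ on $\pi_k(F) \otimes \bQ$ (dualised to $W^k$); triviality of the action then corresponds to $\theta_i|_{W^k} = 0$ for every $i$.

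Next I represent $\omega$ by $w \in W^k$ and $a$ by $v_a \in V^1$. The degree analysis forces any $D$-cocycle restricting to $w$ to have the shape $w + \beta$ with $\beta \in \Lambda^k V$ satisfying $d_V \beta = -\kappa_w$; the existence of at least one such $\beta$ is guaranteed by the hypothesis that $\omega$ lifts to some $\tilde\omega$, and every other lift differs from this one by a $d_V$-cocycle in $\Lambda^k V$ (a representative of an element of $p^*H^k(N;\bQ)$), which I absorb into $\beta$. Thus every representative of every lift has the form $w + \beta$, and $p^*a \cup \tilde\omega$ is represented in degree $k+1$ by $v_a w + v_a \beta$. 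To check this class is non-zero, suppose $v_a w + v_a \beta = D\eta$ for some $\eta \in (\Lambda V \otimes \Lambda W)^k = W^k \oplus \Lambda^k V$; writing $\eta = w' + \alpha'$ and expanding yields $D\eta = \sum_i v_i\,\theta_i(w') + \kappa_{w'} + d_V \alpha'$. Extracting the $V \otimes W^k$-component of both sides gives $\sum_i v_i\,\theta_i(w') = v_a w$, which under the triviality assumption reads $0 = v_a w$. But $v_a$ and $w$ are non-zero elements of distinct graded tensor factors, so their product is non-zero in the free algebra $\Lambda V \otimes \Lambda W$. This contradiction proves $p^*a \cup \tilde\omega \neq 0$.

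The principal obstacle is the clean identification of ``$\pi_1(N)$ acts trivially on $\pi_k(F) \otimes \bQ$'' with ``$\theta_i|_{W^k} = 0$ for all $i$''. Over $\bQ$, a representation of the torsion-free nilpotent group $\pi_1(N)$ is trivial if and only if the induced Lie-algebra representation of its Malcev completion is zero, and the derivations $\theta_i|_{W^k}$ are precisely the images of the generators of this Lie algebra (dual to $v_1,\ldots,v_n$) acting on $W^k \cong (\pi_k(F) \otimes \bQ)^*$. Once this dictionary is accepted, the remainder of the argument is the short graded-algebra computation above, in the spirit of \thmref{thm:ansc action}.
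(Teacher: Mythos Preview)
Your argument is correct and takes a genuinely different route from the paper's. The paper argues directly rather than by contrapositive: it passes to integral classes, builds a map $p^*a \times \tilde\omega \colon \overline M \to K(\bZ,1)\times K(\bZ,k)$, uses the vanishing of $p^*a \cup \tilde\omega$ to lift this map through the homotopy fibre $E$ of the cup-product map $K(\bZ,1)\times K(\bZ,k) \to K(\bZ,k+1)$, and then reads off from the minimal model of $E$ that the Whitehead product $[\hat\iota_1, \hat\iota_2]$ is rationally non-zero there; pushing $[\alpha, \gamma] \in \pi_k(\overline M)$ forward via the lift to a non-zero multiple of $[\hat\iota_1,\hat\iota_2]$ shows $[\alpha,\gamma] \neq 0$, and hence the action is non-trivial. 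Your approach, by contrast, stays entirely inside the relative minimal model---very much in the spirit of the proof of \thmref{thm:ansc action}---and dispenses with the Eilenberg--MacLane spaces and the lifting step; it is shorter and purely algebraic, and actually delivers the slightly sharper conclusion that the \emph{rational} action on $\pi_k(F)\otimes\bQ$ is already non-trivial. The paper's approach, on the other hand, makes the link to Whitehead products (and hence to the action) explicit at the level of spaces, in keeping with its announced ``mix of classical and rational homotopy theory''. The identification you flag as the ``principal obstacle'' is precisely the one the paper uses (without comment) in \thmref{thm:ansc action}: the $V^1\otimes W^k$-component of $D$ on $W^k$ is dual to the Whitehead-product pairing $\pi_1 \otimes \pi_k \to \pi_k$, so its vanishing is equivalent to rational triviality of the action.
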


\begin{proof} First, we can take multiples of $a$ and $\om$ so that they are integral. We therefore assume this. Now
note that the condition $i^*(\tilde\om) = \om$ is equivalent to saying that $\tilde\om|_{\mathrm{Im}(h)} \not = 0$, where
$\tilde\om \in H^k(\overline M) \to \Hom(H_k(\overline M;\bZ),\bZ)$ is considered dual to homology and operating on the image of
Hurewicz in $H_k(\overline M;\bZ)$. So now take $\gamma \in \pi_k(F)=H_k(F)$ such that $\tilde\om(h(\gamma))\neq$
and $\alpha \in \pi_1(N)=\pi_1(\overline M)$ such that $p^*a(h(\alpha))\neq 0$. So we are now thinking
of $\alpha\in\pi_1(\overline M)$ and $\gamma\in\pi_k(\overline M)$.

As we mentioned in Subsection \ref{subsec:fibnoact} the deviation of the action $\alpha \cdot \gamma$ from being
trivial is detected by the Whitehead product.
Thus, to show that the action of $\pi_1(N)=\pi_1(\overline M)$ on $\pi_k(\overline M)=H_k(F)$ is nontrivial,
it is sufficient to show that the Whitehead product $[\alpha, \gamma]$ is nonzero.
The cohomology classes $p^*a$ and $\tilde\om$ give a map
\[
p^*a \times \tilde\om \colon \overline M \to K(\bZ,1)\times K(\bZ,k)
\]
which, composed with $\iota_1\cup \iota_2 \colon K(\bZ,1)\times K(\bZ,k) \to K(\bZ,k+1)$ yields
\[
(p^*a \times \tilde\om)^*(\iota_1 \cup \iota_2)^*(\iota_3)= p^*a \cup \tilde\om = 0\,.
\]
Here, $\iota_j$ is the fundamental cohomology class of $K(\bZ,j)$. The equality $p^*a \cup \tilde\om = 0$
then shows that there is a lifting $\phi$ in the following diagram (where the right square is a pullback).
\[
\xymatrix{
& E \ar[d] \ar[r] & PK(\bZ,k+1) \ar[d] & \\
\overline M \ar[r]_-{p^*a \times \tilde\om} \ar[ur]^-\phi & K(\bZ,1)\times K(\bZ,2)
\ar[r]^-{\iota_1 \cup \iota_2} & K(\bZ,k+1)    }
\]

Now, the minimal model of $E$ is apparent: $\mathcal{M}_E =(\Lambda(x,y,z),d)$ with $|x|=1$, $|y|=k$, $|z|=k$ and the only
non-zero differential is $dz=xy$. The quadratic part of the differential, $d_1$, corresponds to the Whitehead product, up to sign so
we see that the Whitehead product in $\pi_k(E)\otimes \bQ$ is non-zero, $[\hat\iota_1,\hat\iota_2]\otimes \bQ \not = 0$, where $\hat\iota_1 \in \pi_1(K(\bZ,1))$ and $\hat\iota_2 \in \pi_2(K(\bZ,\textcolor{red}{2}))$ are the generators of the respective homotopy groups. But then any
integral multiple of $[\hat\iota_1,\hat\iota_2]$ is also non-zero as well. Now, since $p^*a(h(\alpha))\neq 0$ and $\tilde\om(h(\gamma))\neq 0$,
the lift $\phi$ can be used to push the Whitehead product $[\alpha, \gamma] \in \pi_k(M)$ forward to an integral
multiple of $[\hat\iota_1,\hat\iota_2]$. Since the latter is nontrivial, so is the former and we are done.
\end{proof}

\begin{example}
Suppose that $F \to M \to S^1$ has a model of the form
\[
\Lambda(x) \otimes \Lambda(v_1, v_2, w_1, w_2, w_3)\,,
\]
in which $|x| = 1$, $|v_i| = 2$, and $|w_j| = 3$, with differentials
\[
D(x) = 0, \quad D(v_1) = 0, \quad D(v_2) = xv_1, \quad D(w_1) = v_1^2, \quad D(w_2) = v_2^2 + 2 x w_2
\]
and
\[
D(w_3) = v_1 v_2 + x w_1\,.
\]
Then the action of $\pi_1(M) = \bZ$ on $\pi_2(M)$ -- as reflected in the differential $D(v_2) = xv_1$ -- is nilpotent but not trivial. We also see this from \propref{prop:nontrivaction} since $x \cup v_1=0$ (in cohomology) and $v_1$ is also a non-zero class
in $H^*(F;\bQ)$. Note that we do not have a surjection $H^*(M;\bQ) \to H^*(F;\bQ)$ since $v_2$ gives a non-zero class in $H^*(F;\bQ)$.

This example can be made ``geometric'' by realizing the model sequence
\[
(\Lambda(x),d=0) \to (\Lambda(x,v_1, v_2, w_1, w_2, w_3),D) \to (\Lambda(v_1, v_2, w_1, w_3,w_2),d)
\]
as a fibration $F \to M \to S^1$, then adjusting the map $M \to S^1$ to be smooth and finally adjusting the pullback of the $1$-form
on $S^1$ to get a submersion. This results in a mapping torus fibre bundle with the same rational homotopy characteristics as
the model sequence. Now, rationally, the model of $F$ may be displayed as a KS-extension
\[
(\Lambda(v_1,v_2,w_1,w_2),d) \to (\Lambda(v_1, v_2, w_1,w_2, w_3),\bar d)\otimes (\Lambda(w_3),d=0)\,,
\]
with non-trivial differentials $\bar d w_1=dw_1=v_1^2$, $\bar d w_2 =dw_2 = v_2^2$ and $\bar d w_3=v_1v_2$. Thus,
$(\Lambda(v_1, v_2, w_1, w_2),d)$ a model for $S^2 \times S^2$ and, up to rational homotopy, $F$ may be viewed as the
principal $S^3$-bundle over $S^2 \times S^2$ induced from the Hopf bundle $S^3 \to S^7 \to S^4$ by the top class map $S^2 \times S^2 \to S^4$. Note that this is the simplest simply connected non-formal manifold. Also note that the total space $M$ and fibre $F$ are elliptic spaces. Finally note that we have $p^*$ injective even though $\chi(F)=0$. (A general explanation for this is that a more general type of transfer exists. For a fibration $F \to E \to B$ with compact fibre $F$, if there exists a map $f\colon E \to E$ over $B$ and the Lefschetz number $L(\bar f)\neq 0$, where $\bar f\colon F \to F$ is the restriction of $f$, then a transfer map $\tau\colon H^*(M;\bQ) \to H^*(F;\bQ)$ is defined
such that $\tau p^*$ is multiplication by $L(\bar f)$. Here, we can define a self-map of $M$ at the model level by
$\phi(x)=0,\ \phi(v_j)=a v_j,\ \phi(w_j)=a^2 w_j$. The Lefschetz number of the restricted map on $F$ is easily seen to be
$L(\bar f) = 1+2a -2a^3 -a^4$ and choosing $a$ appropriately gives something non-zero.)
\end{example}

There is a whole family of examples like this one, obtained by adjusting the number and dimensions of the even spheres from which
$F$ is constructed, as well as by ``truncating'' with higher-degree odd generators. For instance, we have the following variation.

\begin{example}
Suppose that $F \to M \to S^1$ has a model
\[
\Lambda(x) \otimes \Lambda(v_1, v_2, w_1, w_2, w_3, w_4)\,,
\]
in which $|x| = 1$, $|v_i| = 2$, and $|w_j| = 5$, with differentials
\[
D(x) = 0 = D(v_1), \quad D(v_2) = xv_1, \quad D(w_1) = v_1^3, \quad D(w_2) = v_1^2 v_2 + x w_1\,,
\]
\[
D(w_3) = v_1v_2^2 + 2 x w_2, \quad D(w_4) = v_2^3 + 3 x w_3\,.
\]
Again, here, we have that $M$ is elliptic and $\chi(F) = 0$. There is an evident pattern present in these examples.
\end{example}

These examples confirm the phenomenon that a non-trivial action in degree-two entails a non-trivial action on (or, more generally,
non-trivial involvement of $x$ in differentials of) higher-degrees if $M$ is finite dimensional. More generally, a non-trivial action in any even degree will entail similar constraints.

\subsection{The case in which \texorpdfstring{$F$}{Lg} is elliptic with positive Euler characteristic}\label{subsec:F0}
Long ago, R. Bott conjectured that closed manifolds with non-negative sectional curvature are elliptic (see \cite{FOT} for example).
This was extended in \cite[Conjecture 6.1]{Tu} to manifolds which are almost non-negatively curved in the generalized sense.
Also, Hopf conjectured that a positively curved manifold has positive Euler characteristic. If these conjectures are true, then
it is possible that in an ANSC-bundle $F \to M \to N$, the fibre could be an $F_0$-space (see Subsection \ref{subsec:min}).
(In \cite{AK}, the authors study such manifolds as \emph{total} spaces of fibrations.)

We will prove two results with this hypothesis on the fibre.

\begin{theorem}\label{thm:zero derivs = trivial}
Suppose that $F \stackrel{i}{ \to} \overline M \stackrel{p}{\to} S^1$ is an ANSC-type fibration with $F$ an $F_0$-space. If there are no non-trivial degree-zero, nilpotent derivations of
$H^*(F; \bQ)$, then the fibration is rationally trivial: $\overline M$ and $S^1 \times F$ are of the same rational homotopy type.
\end{theorem}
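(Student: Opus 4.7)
The plan is to exploit the fact that the base is $S^1$ combined with the very rigid structure of an $F_0$-space's minimal model. Write $(\Lambda W, d_W)$ for the minimal model of $F$; since $F$ is an $F_0$-space, $\Lambda W = \Lambda(V^{\rm even} \oplus V^{\rm odd})$ with $d_W(V^{\rm even})=0$, $d_W(V^{\rm odd}) \subset \Lambda^{\geq 2}V^{\rm even}$, and $H^*(F;\bQ)$ is concentrated in even degrees. The minimal model of $S^1$ is $(\Lambda(x),0)$ with $|x|=1$, and since $x^2=0$ the relative minimal model of the fibration collapses to
\[
(\Lambda(x),0)\to (\Lambda(x)\otimes\Lambda W, D)\to (\Lambda W,d_W),
\]
with $D(w)=d_W(w)+x\theta(w)$ for $w\in W$ (no $\Lambda^{\geq 2}V$ contribution appears). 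Here $\theta$ is a degree-zero derivation of $\Lambda W$ commuting with $d_W$, and its induced derivation $\theta^*$ on $H^*(F;\bQ)$ is nilpotent, as in \examref{exam:nilrel}. By our hypothesis, $\theta^*=0$.

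The key step is to show that $\theta$ is a $[d_W,-]$-boundary, i.e.\ $\theta=d_Wh+hd_W$ for some degree $-1$ derivation $h$ of $\Lambda W$. I would build $h$ on generators in two stages. For each even generator $y_i\in V^{\rm even}$, $\theta(y_i)$ is a $d_W$-cocycle (since $d_Wy_i=0$ and $\theta$ commutes with $d_W$), and $\theta^*([y_i])=[\theta(y_i)]=0$ yields $u_i\in\Lambda W$ with $\theta(y_i)=d_W(u_i)$; set $h(y_i):=u_i$. The derivation $\theta':=\theta-[d_W,h]$ still commutes with $d_W$, still induces the zero derivation on cohomology, and vanishes on $\Lambda V^{\rm even}$. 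For each odd generator $z_j\in V^{\rm odd}$, $\theta'(z_j)$ is a cocycle (since $d_Wz_j\in \Lambda V^{\rm even}$, on which $\theta'$ vanishes). Crucially, $\theta'(z_j)$ has \emph{odd} degree while $H^{\rm odd}(F;\bQ)=0$, so $\theta'(z_j)=d_W(v_j)$ automatically. Setting $h'(z_j):=v_j$ and $h'(V^{\rm even}):=0$, a direct check on generators shows $\theta-[d_W,h+h']=0$, as required.

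With such an $h$ (meaning $h+h'$) in hand, define the algebra automorphism $\sigma:=1+xh$ of $\Lambda(x)\otimes\Lambda W$ by $\sigma(x):=x$ and $\sigma(w):=w+xh(w)$ for $w\in W$; this is well defined because $xh$ is a degree-zero derivation of the tensor product with $(xh)^2=0$. The essential computation is that $\sigma$ intertwines the \emph{product} differential $D_0:=d_W$ (which yields the model of $S^1\times F$) with $D$: for $w\in W$,
\[
D\sigma(w)=d_W(w)+x\theta(w)-xd_Wh(w)=d_W(w)+xhd_W(w)=\sigma(d_Ww)=\sigma D_0(w),
\]
using $\theta=d_Wh+hd_W$, together with the trivial $D\sigma(x)=0=\sigma D_0(x)$. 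Hence $\sigma$ is an isomorphism of relative minimal models $(\Lambda(x)\otimes\Lambda W, D_0)\xrightarrow{\cong}(\Lambda(x)\otimes\Lambda W, D)$, so $\overline M$ and $S^1\times F$ share a minimal model and by \propref{prop:rhtype} are of the same rational homotopy type.

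The main obstacle is the middle step: for a general formal space a degree-zero derivation of the minimal model inducing zero on cohomology need not be $[d_W,-]$-exact. Two features of the $F_0$-hypothesis rescue the argument, namely that $V^{\rm even}$ consists of $d_W$-cocycles (so $\theta$-values there \emph{are} cohomology classes, forced to vanish by $\theta^*=0$) and that $H^{\rm odd}(F;\bQ)=0$ (so odd-degree cocycles are automatically coboundaries, with no further appeal to $\theta^*$).
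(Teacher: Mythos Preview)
Your proof is correct and follows essentially the same approach as the paper's: both arguments use the $F_0$-structure first to find potentials for $\theta$ on even generators (via $\theta^*=0$) and then for $\theta'$ on odd generators (via $H^{\rm odd}=0$), and then perform the corresponding change of basis. The only difference is packaging: the paper carries out two successive conjugations $D\mapsto D'\mapsto D''$ by explicit automorphisms $\phi,\phi'$, whereas you phrase the same data as $\theta=[d_W,h+h']$ and build a single automorphism $\sigma=1+x(h+h')$, which is literally the paper's composite $\phi\circ\phi'$.
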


\begin{proof}
Write a Sullivan model for the fibration as
\[
\Lambda(v) \to (\Lambda(v) \otimes\Lambda(W), D) \to (\Lambda W, d_W)\,.
\]
We will show that $(\Lambda(v) \otimes\Lambda(W), D)$, which is the minimal model of $M$, is isomorphic to $(\Lambda(v) \otimes\Lambda(W), 1\otimes d_W)$, the minimal model of
$S^1 \times F$, from which the assertion follows.

To this end, write the differential $D$ (following the general form of \examref{exam:nilrel} or \eqref{differential_D}) as
\[
D\chi= d_W\chi + v\theta(\chi)\,,
\]
for $\chi \in \Lambda W$, to define $\theta$, a degree-zero derivation of $\Lambda W$ that satisfies $d_W \theta = \theta d_W$ and thus passes to cohomology. Since $F$ is an $F_0$-space, its minimal model has the form $W = \langle x_1, \ldots, x_r \rangle \oplus \langle y_1, \ldots, y_r \rangle$, with $d_W(x_i) = 0$ and $d_W(y_j) \in \Lambda(x_1, \ldots, x_r)$. Furthermore, the cohomology of $\Lambda W$ is zero in odd degrees. Since each $x_i$ is a $d_W$-cycle, and by hypothesis $H^*(F;\bQ)$ has no non-trivial degree zero nilpotent derivations, we must have $\theta(x_i) = d_W(\eta_i)$ for some $\eta_i \in \Lambda W$ and for each $i$. So define a map $\phi \colon \Lambda(v) \otimes\Lambda(W) \to \Lambda(v) \otimes\Lambda(W)$ in the first place on generators as $\phi = \mathrm{id}$ on $\{v, y_1, \ldots, y_r\}$, and $\phi(x_i) = x_i + v \eta_i$ for each $i$, and then extending multiplicatively. Clearly $\phi$ is an isomorphism of algebras. If we define $D' = \phi^{-1}\circ D \circ \phi$, it is easy to check that $D'$ makes $(\Lambda(v) \otimes\Lambda(W), D')$ a Sullivan (minimal) model and we obtain an isomorphism of minimal models
\[
\phi \colon (\Lambda(v) \otimes\Lambda(W), D') \to (\Lambda(v) \otimes\Lambda(W), D)\,.
\]
For each $x_i$, we have
\[
D'(x_i) = \phi^{-1}\circ D \circ \phi(x_i) = \phi^{-1}\circ D(  x_i + v \eta_i) = \phi^{-1}\big(v\theta(x_i) - v d_W(\eta_i) \big)= 0\,.
\]
Now write
\[
D'\chi= d_W\chi + v\theta'(\chi)\,,
\]
just as before, to obtain $\theta'$, a degree-zero derivation of $\Lambda W$ that satisfies $d_W \theta' = \theta' d_W$. Notice that we have $\theta'(x_i) = 0$ for each $x_i$. On each $y_j$, we have $d_W \theta' (y_j) = \theta' d_W(y_j) = 0$, since $d_W(y_j) \in \Lambda(x_1, \ldots, x_r)$. Since $\theta' (y_j)$ is a $d_W$-cycle of odd degree, we must have
$\theta' (y_j) = d_W(\zeta_j)$ for some $\zeta_i \in \Lambda W$ and for each $j$.  Make a second change of basis as before, using
$\phi' = \mathrm{id}$ on $\{v, x_1, \ldots, x_r\}$, and $\phi'(y_j) = y_j + v \zeta_j$ for each $j$,  defining $D'' = (\phi')^{-1}\circ D' \circ \phi'$, and obtaining an isomorphism of minimal models
\[
\phi' \colon (\Lambda(v) \otimes\Lambda(W), D'') \to (\Lambda(v) \otimes\Lambda(W), D')\,.
\]
For each $x_i$, we have $D''(x_i) = (\phi')^{-1}\circ D' \circ \phi'(x_i) = (\phi')^{-1}\circ D' (x_i) = 0$, and
\[
\begin{aligned}
D''(y_j) &= (\phi')^{-1}\circ D' \circ \phi'(y_j) = (\phi')^{-1}\circ D'(  y_j + v \zeta_j) \\
&= (\phi')^{-1}\big(d_W(y_j) + v\theta'(y_j) - v d_W(\zeta_j) \big)= d_W(y_j),
\end{aligned}
\]
since $d_W(y_j) \in \Lambda(x_1, \ldots, x_r)$ and we have $\phi'(x_i) = x_i$, hence $(\phi')^{-1}(x_i) = x_i$, for each $i$.  Then the isomorphism
\[
\phi\circ \phi' \colon (\Lambda(v) \otimes\Lambda(W), 1\otimes d_W) \to (\Lambda(v) \otimes\Lambda(W), D)
\]
is the desired isomorphism of minimal models.
\end{proof}

The well-known \emph{Halperin conjecture} of rational homotopy (see Problem 1 of \cite[\S39]{FHT})
conjectures that any \emph{negative degree} derivation of the rational cohomology algebra of an $F_0$-space should be zero. We should point out, however, that where the Halperin conjecture has been established for a certain cohomology algebra $H$, it does not automatically follow that  degree-zero derivations also vanish on $H$.

\begin{proposition}\label{prop: degree-zero Halperin}
Suppose that $H$ is a Poincar{\'e} duality algebra with a single generator (monogenic) of even degree or two generators of even degree. Then there are no non-trivial degree-zero, nilpotent derivations of $H$.
\end{proposition}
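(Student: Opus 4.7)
My plan is to handle the monogenic and two-generator cases separately, in each case reducing to a concrete polynomial presentation of $H$ and exploiting the rigidity of degree-zero nilpotent derivations together with the Artinian Gorenstein (= Poincar\'e duality) property.

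In the monogenic case, write $H = \bQ[x]/(x^{n+1})$ with $|x|$ even. Degree considerations force $\theta(x) = c x$ for some $c \in \bQ$, and the Leibniz rule then gives $\theta(x^j) = jc\, x^j$. Hence $\theta$ acts diagonalizably on $H$ with eigenvalues $0, c, 2c, \ldots, nc$, and nilpotency forces $c = 0$, whence $\theta = 0$.

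For the two-generator case, since $H$ is a graded Artinian Gorenstein quotient of $R = \bQ[x_1, x_2]$, a classical codimension-two result of commutative algebra forces $H = R/(f_1, f_2)$ for some homogeneous regular sequence $f_1, f_2$ of even-degree polynomials. I would lift $\theta$ to a degree-zero derivation of $R$ (still called $\theta$) preserving $I = (f_1, f_2)$, and classify the nilpotent possibilities by a direct degree count. After a suitable change of basis of the generators, $\theta$ is either zero, the ``lowering operator'' $x_1 \partial_{x_2}$ (when $|x_1| = |x_2|$), or $x_1^m \partial_{x_2}$ (when $|x_2| = m|x_1|$ for some integer $m \geq 2$); in every other configuration of generator degrees, any degree-zero derivation of $R$ is already semisimple, hence zero if nilpotent.

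The main step is to show that neither of the two nonzero normal forms descends to a nontrivial derivation of $H$. The crucial observation is that, in both cases, $\theta$ acts on each graded piece of $R$ as a \emph{single} Jordan block in the $x_2$-exponent. If $|f_1| < |f_2|$, then $\theta(f_1) \in (f_1)$ must be a scalar multiple of $f_1$, hence zero by nilpotency, which forces $f_1 = x_1^{D_1}$ up to scalar; the Artinian requirement that $f_2 \bmod (x_1)$ be a nontrivial power of $x_2$ then either forces $\theta(x_2) \in I$ (so $\theta$ descends to $0$ on $H$) or contradicts $\theta(f_2) \in I$. If $|f_1| = |f_2|$, then the two-dimensional subspace $\langle f_1, f_2 \rangle$ is $\theta$-invariant inside a Jordan block of $R^{|f_1|}$ and must coincide with $\ker(\theta^2)$ on that piece, forcing $I$ into an ideal of the form $x_1^{D-m}(x_1^m, x_2)$; but then $\bQ[x_2]$ embeds in $R/I$, contradicting $\dim_\bQ H < \infty$. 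The main obstacle is threading this case-analysis cleanly through all the configurations of $|x_1|, |x_2|, |f_1|, |f_2|$, but once the bookkeeping is set up, every possibility collapses either to $\theta|_H = 0$ or to a violation of the Artinian hypothesis.
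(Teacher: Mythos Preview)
Your argument is correct, but it takes a substantially longer route than the paper's. In the paper, everything is done inside $H$: after reducing to $\theta(x)=0$ and $\theta(y)=\lambda x^{p}$ (with $|y|=p|x|$), one observes that if $k$ is the largest exponent with $x^{k}\neq 0$ and $\ell$ is maximal with $x^{k}y^{\ell}\neq 0$, then $x^{k}y^{\ell}$ is annihilated by both $x$ and $y$ and hence, by Poincar\'e duality, is a fundamental class. Since the top degree of $H$ is one–dimensional, the nilpotent $\theta$ vanishes there; but the Leibniz rule gives $\theta\bigl(x^{\,k-p}y^{\ell+1}\bigr)=(\ell+1)\lambda\,x^{k}y^{\ell}$, forcing $\lambda=0$. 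That is the whole proof in the two-generator case.

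By contrast, you invoke the codimension-two Gorenstein $\Rightarrow$ complete intersection theorem to present $H=R/(f_{1},f_{2})$, lift $\theta$ to $R$, put it in Jordan normal form $x_{1}^{m}\partial_{x_{2}}$, and then run a case analysis on $|f_{1}|$ versus $|f_{2}|$ and on the shape of the defining ideal. This is sound (your single-Jordan-block observation on each $R^{d}$ is the right structural fact, and the finite-dimensionality/regular-sequence contradiction in the equal-degree case is clean), and it has the virtue of making the obstruction visible at the level of the defining ideal. The cost is that you import a nontrivial piece of commutative algebra and a fair amount of bookkeeping that the paper avoids entirely; the paper's argument uses only Poincar\'e duality and one Leibniz computation. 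If you are going to present your version, it would be worth remarking that the lift of $\theta$ to $R$ is itself nilpotent (this is not automatic, but follows here because the relations lie in $(x_{1},x_{2})^{2}$, so $H$ and $R$ agree in the generating degrees), since you use this when concluding $\theta(f_{1})=c f_{1}\Rightarrow c=0$.
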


\begin{proof}
 If $H$ is monogenic, e.g. we might have $H = H^*(\CP^n;\bQ)$ with the generator of degree $2$, then write the generator as $x$.  Since $\theta$ is nilpotent, and $H^{|x|}$ is a rank one vector space, we must have $\theta = 0$ on  $H^{|x|}$. That is, we have $\theta = 0$ on $H$.

Now suppose that $H$ has two even-degree generators $x$ and $y$, with $|x| \leq |y|$. If $|y|$ is not a multiple of $|x|$, then $H$ is of rank one in degrees $|x|$ and $|y|$, and we have $\theta = 0$ for the same reason as in the monogenic case.  So suppose that we have $|y| = p |x|$ for some $p \geq 1$.  Then the only possibility for a non-zero nilpotent $\theta$ is $\theta(x) = 0$ and $\theta(y) = \lambda x^p$, if $x^p \not = 0 \in H$.  Now suppose $x^k$ is the highest power of $x$ that is non-zero in $H$ (so that we have $x^{k+1} = 0$).  Note that $k \geq p$.  Then, suppose that $\ell\geq 0$ is the maximum exponent for which $x^k y^\ell \not = 0$ but $x^k y^{\ell+1} = 0$.  Then $x^k y^\ell$ is a non-zero element of $H$ that annihilates $x$ and $y$, and hence must be a fundamental class of $H$ (of maximum degree). Now in this maximal non-zero degree, $H$ is a rank-one vector space, and thus $\theta = 0$ in the top degree. In particular, a fundamental class cannot be in the image of $\theta$.  However, we have $\theta( x^{k-1} y^{\ell+1}) = \lambda (\ell+1) x^k y^\ell$, and so we must have $\lambda = 0$.  That is, we have $\theta = 0$.
\end{proof}

So, for instance, if we have $F = S^{2m} \times S^{2n}$ for any $m, n$, then \thmref{thm:zero derivs = trivial} and \propref{prop: degree-zero Halperin} imply that any ANSC-type fibration $F \stackrel{i}{ \to} \overline M \stackrel{p}{\to} S^1$ is trivial and, in particular the action of the fundamental group on the higher homotopy groups of $\overline{M}$ is rationally trivial.  For more general bases, we have the following.

\begin{theorem}\label{thm:F0action}
Suppose $F$ is an $F_0$-space such that there are no non-trivial degree-zero, nilpotent derivations of
$H^*(F; \bQ)$. Then, for an ANSC-type fibration $F \to M \to N$,
the action of the fundamental group on the higher homotopy groups of $M$ is rationally trivial.
\end{theorem}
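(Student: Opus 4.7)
The plan is to generalize the change-of-basis argument from the proof of \thmref{thm:zero derivs = trivial} from the case $N = S^1$ to a general nilmanifold $N$. For the present statement, I only need to trivialize the linear-in-$V$ part of the relative differential on $W$-generators, not the entire fibration, which is what makes the generalization feasible.

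First I would set up the relative minimal model $(\Lambda V,d_V) \to (\Lambda V \otimes \Lambda W, D) \to (\Lambda W, d_W)$ with $V = \langle v_1,\dots,v_n\rangle$ in degree $1$ and, as in \examref{exam:nilrel} and \eqref{differential_D}, write
\[
D\chi = d_W\chi + \sum_{i=1}^n v_i\,\theta_i(\chi) + \chi_2, \qquad \chi_2 \in \Lambda^{\geq 2} V \otimes \Lambda W.
\]
Each $\theta_i$ is a degree-zero derivation of $\Lambda W$ commuting with $d_W$, so it induces a derivation $\theta_i^*$ on $H^*(F;\bQ)$. The most delicate step is the next one: to conclude $\theta_i^* = 0$ from the hypothesis I need each $\theta_i^*$ to be nilpotent. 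This follows because the fibration is quasi-nilpotent, so $\pi_1(N)$ acts nilpotently on $H^*(F;\bQ)$; this rationalizes to a nilpotent representation of the Malcev Lie algebra of $\pi_1(N)$, in which every Lie algebra element, and in particular each generator dual to $v_i$, whose action on cohomology is precisely $\theta_i^*$, acts by a nilpotent endomorphism. The hypothesis then gives $\theta_i^* = 0$ for every $i$.

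With $\theta_i^* = 0$ in hand, I would run the two-step change of basis of \thmref{thm:zero derivs = trivial} for all $v_i$ at once. Using the $F_0$-splitting $W = \langle x_1,\dots,x_r\rangle \oplus \langle y_1,\dots,y_r\rangle$ with $d_W x_k = 0$ and $d_W y_j \in \Lambda(x_1,\dots,x_r)$, each $\theta_i(x_k)$ is a $d_W$-cocycle of class $\theta_i^*[x_k] = 0$, so $\theta_i(x_k) = d_W(\eta_{i,k})$ for some $\eta_{i,k}\in \Lambda W$. The automorphism of $\Lambda V \otimes \Lambda W$ fixing $V$ and each $y_j$ and sending $x_k \mapsto x_k + \sum_i v_i \eta_{i,k}$ produces a new differential $D'$ whose linear-in-$V$ part on each $x_k$ vanishes. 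The induced new derivations $\theta'_i$ still commute with $d_W$ and annihilate every $x_k$, so $d_W \theta'_i(y_j) = \theta'_i(d_W y_j) = 0$; then $\theta'_i(y_j)$ is a $d_W$-cocycle of odd degree, which vanishes in cohomology since $H^{\mathrm{odd}}(F;\bQ) = 0$ for an $F_0$-space. Thus $\theta'_i(y_j) = d_W(\zeta_{i,j})$, and a second automorphism fixing $V$ and each $x_k$ and sending $y_j \mapsto y_j + \sum_i v_i \zeta_{i,j}$ eliminates the linear-in-$V$ part on the odd generators while, since it fixes each $x_k$, preserving its vanishing on the even generators. In this final basis, the linear part in $V$ of $D$ on $W$-indecomposables is identically zero, which is exactly the rational triviality of the $\pi_1(M)$-action on $\pi_{\geq 2}(M) \otimes \bQ$ via duality on indecomposables.

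The main obstacle, once past the nilpotency step identifying $\theta_i^* = 0$, is the bookkeeping: verifying that the correction terms $d_V(v_i)\eta_{i,k}$, $\sum_j v_j\theta_j(\eta_{i,k})$, and their $\zeta_{i,j}$-analogues all lie in $\Lambda^{\geq 2}V \otimes \Lambda W$ and therefore never reintroduce linear-in-$V$ contributions. Crucially, I would make no attempt to trivialize the higher-order $V$-part of $D$; the Chevalley--Eilenberg relations $d_V(v_k) = -\sum c_{ij}^k v_iv_j$ and the cross-terms among the $\theta_i$ generally obstruct such a trivialization, but such a trivialization is unnecessary for the statement about the $\pi_1$-action.
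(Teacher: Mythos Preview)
Your proof is correct, but it takes a genuinely different route from the paper's. The paper argues by reduction to the case $N = S^1$: if some element $\alpha \in \pi_1(M)$ acted non-trivially on $\pi_q(M)\otimes\bQ$, one pulls the fibration back along a map $S^1 \to N$ classifying $\alpha$ (equivalently, one projects the base model onto the single degree-one generator corresponding to $\alpha$ and pushes out the relative model). The quadratic term $v_\alpha \cdot w$ witnessing the non-trivial action survives this move, so the resulting fibration over $S^1$ would still have non-trivial action, contradicting \thmref{thm:zero derivs = trivial}. Thus the paper never revisits the change-of-basis mechanics for a general base. Your approach instead runs the two-step change of basis of \thmref{thm:zero derivs = trivial} directly over the general nilmanifold, simultaneously for all $v_i$, and trivializes only the linear-in-$V$ part of $D$ on $W$-generators. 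The paper's reduction is shorter and reuses the $S^1$ case as a black box; your direct argument is more explicit about exactly which piece of the model must be killed (the $V\otimes W$ component of the quadratic differential), and you correctly observe that the full rational triviality of the fibration established over $S^1$ need not generalize, since the cross-terms coming from $d_V(v_i)\eta_{i,k}$ and $v_iv_j\theta_j(\eta_{i,k})$ land in $\Lambda^{\geq 2}V\otimes\Lambda W$ and are simply left alone.
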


\begin{proof}
\thmref{thm:zero derivs = trivial} handles the case in which $N = S^1$.  But this case is sufficient since, for any element in $\pi_1(M)$ that acts
non-trivially, we may pull-back the fibration to obtain one over a circle that also would have non-trivial action.  From the point
of view of models, we may ``isolate" any generator in the minimal model of the base by projecting onto it, and then pushing-out to obtain a
relative model of some fibration over a circle.  Any occurrence of the ``isolated" generator in the quadratic part of a differential would be
preserved under this move.  So it is sufficient to rule out such in the special case of base equal to $S^1$.
\end{proof}

Notice that it is possible to have a non-zero nilpotent derivation on a cohomology algebra generated in degree $2$, so long as we move away from the $F_0$-space case.

\begin{example}
Let $X = (S^2 \times S^2) \# (S^2 \times S^2)$.  Then $H^*(X; \bQ)$ may be presented as
\[
H^*(X; \bQ) \cong \frac{ \bQ[a, b, \alpha, \beta]}{\big( a^2, b^2, \alpha^2, \beta^2, a\alpha, a\beta, b\alpha, b\beta, ab - \alpha \beta  \big)}\,,
\]
with $a, b, \alpha, \beta$ of degree $2$.  Define $\theta\colon H^*(X; \bQ) \to H^*(X; \bQ)$ as a linear map on generators by
\[
\theta(a) = -\alpha, \quad \theta(b) = 0, \quad \theta(\alpha) = 0, \quad \theta(\beta) = b\,.
\]
When extended as a derivation, this map preserves the ideal generated by the relations (note that we have $\theta(a\beta) = -\alpha\beta + ab$) and hence extends as a derivation of the cohomology algebra.  It is clearly nilpotent and non-trivial.  With more argument, one can see that this example corresponds, up to rational homotopy, to an ANSC-type fibration
\[
X \to \overline{M} \to S^1
\]
in which the action is non-trivial.
\end{example}

The following result was proven in \cite{LO}.

\begin{theorem}[Theorem 4.1 \cite{LO}]\label{thm:formtot}
Suppose $\CP^{n-1} \to E \to B^{2n}$ where $B^{2n}$ is a Poincar\'e duality space of formal dimension $2n$. Then $E$ is formal
if and only if $B$ is formal.
\end{theorem}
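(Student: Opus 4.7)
The plan is to work with a concrete relative minimal model of the bundle and construct formality quasi-isomorphisms directly in each direction. First, I would establish the model. The rational action of $\pi_1(B)$ on $H^*(\CP^{n-1};\bQ)=\bQ[x]/(x^n)$ is trivial, since grading-preserving algebra automorphisms of this monogenic algebra are $x\mapsto\lambda x$ and the only unipotent one is the identity. Hence the bundle is quasi-nilpotent and \thmref{thm:fibmod} yields a relative minimal model
\[
(\mathcal{M}_B,d)\to(\mathcal{M}_B\otimes\Lambda(x,y),D)\to(\Lambda(x,y),\bar d)
\]
with $|x|=2$ and $|y|=2n-1$. Expanding $D^2(y)=0$ forces $[Dx]=0$ in $H^3(\mathcal{M}_B)$, so a change of variables arranges $Dx=0$ and $Dy=x^n+\sum_{i=1}^n a_i x^{n-i}$ with each $a_i\in\mathcal{M}_B^{2i}$ a cocycle; note that $a_n$ lands in the top degree $2n$ of $B$. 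Consequently, $H^*(E;\bQ)$ is a free $H^*(B;\bQ)$-module of rank $n$ on $\{1,\xi,\ldots,\xi^{n-1}\}$ (with $\xi=[x]$), subject to the single Leray-Hirsch relation $\xi^n+\sum[a_i]\xi^{n-i}=0$.

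For the direction ``$B$ formal $\Rightarrow E$ formal,'' I would take a formality quasi-isomorphism $\rho\colon\mathcal{M}_B\to H^*(B;\bQ)$ and extend it to $\Phi\colon\mathcal{M}_B\otimes\Lambda(x,y)\to H^*(E;\bQ)$ by setting $\Phi|_{\mathcal{M}_B}=p^*\circ\rho$, $\Phi(x)=\xi$, and $\Phi(y)=0$. The only thing to verify is that $\Phi(Dy)=\xi^n+\sum p^*[a_i]\,\xi^{n-i}=0$, which is exactly the Leray-Hirsch relation. Since $\rho$ is a quasi-iso on $\mathcal{M}_B$ and $\Phi$ respects the corresponding $H^*(B;\bQ)$-module decomposition, $\Phi$ is a quasi-isomorphism.

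For the reverse direction ``$E$ formal $\Rightarrow B$ formal,'' given a formality quasi-iso $\Psi\colon\mathcal{M}_E\to H^*(E;\bQ)$, the strategy is to modify $\Psi$ through a CDGA-homotopy so that $\Psi|_{\mathcal{M}_B}$ takes values in $p^*(H^*(B;\bQ))\subset H^*(E;\bQ)$. For each generator $v\in\mathcal{M}_B$, decompose $\Psi(v)=\sum_{i=0}^{n-1}\beta_i(v)\,\xi^i$ with $\beta_i(v)\in p^*H^*(B;\bQ)$ and inductively, over the Sullivan filtration of $\mathcal{M}_B$, kill the higher-$\xi$ coefficients by adjusting $\Psi$ with CDGA-boundary corrections built from $y$---exploiting that $Dy$ contains $x^n$ to absorb the required terms. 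Once $\Psi|_{\mathcal{M}_B}$ factors through $p^*(H^*(B;\bQ))\cong H^*(B;\bQ)$, the factored map $\rho\colon\mathcal{M}_B\to H^*(B;\bQ)$ is automatically a quasi-iso, since composing with $p^*$ recovers the induced map on cohomology of $\Psi|_{\mathcal{M}_B}$, which is $p^*$ itself.

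The main obstacle lies in the inductive reduction of the reverse direction: carrying out the boundary corrections consistently across all generators of $\mathcal{M}_B$ while preserving both the algebra structure and the equation $\Psi\circ D=0$. The Poincar\'e duality of $B$ in the specific dimension $2n$, together with the top-degree relation carried by $a_n$, is what I expect to make the relevant cohomological obstructions vanish. An alternative route would be to invoke the Halperin-type principle: since $\CP^{n-1}$ is monogenic, \propref{prop: degree-zero Halperin} gives that $H^*(\CP^{n-1};\bQ)$ admits no nontrivial degree-zero nilpotent derivations, and classical formality-propagation results (in the spirit of Thomas, or \thmref{thm:zero derivs = trivial}) then transfer formality between $E$ and $B$ in this setting.
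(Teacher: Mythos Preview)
The paper does not prove \thmref{thm:formtot}; it is quoted as Theorem~4.1 of \cite{LO} and invoked only as a black box in \corref{cor:app}. There is therefore no in-paper argument to compare your proposal against.

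For what it is worth: your forward direction (``$B$ formal $\Rightarrow$ $E$ formal'') is essentially the standard argument via the relative model and is sound, modulo a small imprecision in the setup (when $B$ is not simply connected, $Dx$ can a priori have a component in $\mathcal{M}_B^1\cdot x$, not just in $\mathcal{M}_B^3$; this is easily dealt with but should be acknowledged). Your reverse direction, however, is only a program: the inductive elimination of higher-$\xi$ coefficients via CDGA-homotopy corrections is exactly the substantive step, and you correctly flag it as unresolved. The alternative route you float does not apply as stated: \thmref{thm:zero derivs = trivial} concerns rational triviality of fibrations over $S^1$, not formality transfer, and the classical fibration-formality results (Thomas, Lupton) typically run in the direction base-and-fibre $\Rightarrow$ total space rather than the reverse. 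For the actual argument you should consult \cite{LO} directly.
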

Also in \cite{LO}, quasi-nilpotent fibrations of the type $\CP^{n-1} \to E \to N^{2n}$ were constructed, where $N^{2n}$ is a $2n$-dimensional
symplectic nilmanifold. The theorem then showed that $E$ was symplectic, but non-K\"ahler. We have seen in Corollary \ref{cor:e0bcat} that
if $\e_0(\tilde M)+\b_1(M)=\cat(M)$, then in the associated ANSC-bundle $F \to \overline M \to N$, the nilmanifold $N$ is a torus
$T^{\b_1(M)}$. Since $\CP^{n-1}$ has positive curvature, it could be the fibre in an ANSC-bundle. (For the bundles of Theorem \ref{thm:formtot},
we would need to know that the structure group preserves the metric on $\CP^{n-1}$ in order to apply the Fukaya-Yamaguchi condition
\cite{FY} for ANSC.) We have the following.

\begin{corollary}\label{cor:app}
Suppose $\CP^{n-1} \to M \to N^{2n}$ is an ANSC-bundle. If $\e_0(\tilde M)+\b_1(M)=\cat(M)$, then:
\begin{enumerate}
 \item $N$ is a torus $T^{\b_1(M)}$;
 \item The action of $\pi_1(M)$ on $\pi_*(M)$ is rationally trivial;
 \item $M$ is formal.
\end{enumerate}
\end{corollary}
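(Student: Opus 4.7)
The plan is to extract each of the three conclusions sequentially from results already established, applied to the given ANSC-type bundle $\CP^{n-1}\to M\to N^{2n}$. Because $F=\CP^{n-1}$ is simply connected and $N=K(\pi,1)$, the long exact sequence gives $\pi_1(M)\cong \pi_1(N)$ (so in particular $\b_1(M)=\b_1(N)$) and \remref{rem:equivfib} gives $\widetilde M\simeq F$.

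For (1), combining \propref{prop:cat} with the standard inequality $\cat(M)\geq \e_0(M)$ and the bound $\b_1(N)\leq \dim(N)$ available for any nilmanifold (see \examref{exam:nilman}) produces the chain
\[
\cat(M)\ \geq\ \e_0(M)\ \geq\ \e_0(\widetilde M)+\dim(N)\ \geq\ \e_0(\widetilde M)+\b_1(N)\ \geq\ \e_0(\widetilde M)+\b_1(M).
\]
The hypothesis $\e_0(\widetilde M)+\b_1(M)=\cat(M)$ then forces equality throughout, so $\b_1(N)=\dim(N)=2n$, and \lemref{lem:rhtnil} identifies $N$ with $T^{2n}=T^{\b_1(M)}$.

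For (2), $\CP^{n-1}$ is elliptic with positive Euler characteristic, hence an $F_0$-space, and $H^*(\CP^{n-1};\bQ)\cong\bQ[x]/(x^n)$ is monogenic of even degree. By \propref{prop: degree-zero Halperin} it therefore admits no non-trivial degree-zero nilpotent derivation, so \thmref{thm:F0action} applies and yields that $\pi_1(M)$ acts rationally trivially on $\pi_*(M)$. For (3), the base $N=T^{2n}$ is a Poincar\'e duality space of the required formal dimension $2n$ and is itself formal (its rational cohomology serves as its own minimal model, as in \examref{exam:tori}), so \thmref{thm:formtot} immediately gives that $M$ is formal.

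I do not anticipate a substantive obstacle in any step; the argument is really bookkeeping to verify that the hypotheses of \propref{prop:cat}, \lemref{lem:rhtnil}, \thmref{thm:F0action}, and \thmref{thm:formtot} are all automatic in the ANSC-type setting (quasi-nilpotency of the bundle, the $F_0$-property of $\CP^{n-1}$, and the $2n$-dimensional Poincar\'e duality of the base). The most delicate point is the collapse of the chain of inequalities in (1), which is precisely where the numerical hypothesis $\e_0(\widetilde M)+\b_1(M)=\cat(M)$ is consumed, and from which everything else cascades.
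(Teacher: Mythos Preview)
Your proposal is correct and follows essentially the same route as the paper. The only cosmetic difference is that the paper invokes \corref{cor:e0bcat} to obtain (1) in one stroke, whereas you unpack that corollary's inequality chain directly; and you are more explicit than the paper in citing \propref{prop: degree-zero Halperin} to verify the no-nilpotent-derivation hypothesis of \thmref{thm:F0action} for $\CP^{n-1}$.
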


\begin{proof}
Suppose $\e_0(\tilde M)+\b_1(M)=\cat(M)$. Then by Corollary \ref{cor:e0bcat}, $N = T^{2n}$, a torus. But by Theorem \ref{thm:formtot} this means that $M$ is formal. By Theorem \ref{thm:F0action}, the action of $\pi_1(M)$ on $\pi_*(M)$ is rationally trivial.
\end{proof}

\section*{Acknowledgments}

The first author would like to thank Cleveland State University and the other authors for the warm hospitality received during the two stays in Cleveland, when this project was carried out. The first author was partially supported by a Juan de la Cierva - Incorporación Grant and by Project MTM2015-63612-P, both of Spanish {\em Ministerio de Econom\'ia y Competitividad}.

\end{document}